\documentclass{amsart}

\usepackage{amsmath,amssymb,amsthm,bm,mathtools}
\usepackage[inline]{enumitem}
\usepackage{url}
\usepackage{xcolor}
\usepackage[T1]{fontenc}
\usepackage{lmodern,microtype}
\usepackage[margin=30mm]{geometry}
\usepackage[hidelinks]{hyperref}
\newcommand{\ain}{\alpha_{\mathrm{in}}}
\newcommand{\abd}{\alpha_{\mathrm{bd}}}
\newcommand{\ass}{\alpha_{\mathrm{ss}}}
\newcommand{\agh}{\alpha_{\mathrm{gh}}}

\theoremstyle{plain}
\newtheorem{theorem}{Theorem}[section]
\newtheorem{corollary}[theorem]{Corollary}
\newtheorem{lemma}[theorem]{Lemma}
\newtheorem{proposition}[theorem]{Proposition}
\theoremstyle{definition}
\newtheorem{definition}[theorem]{Definition}
\newtheorem{remark}[theorem]{Remark}
\newtheorem{example}[theorem]{Example}

\numberwithin{equation}{section}

\newcommand{\R}{\mathbb{R}}

\newcommand{\dist}{\mathrm{dist}}
\DeclareMathOperator*{\osc}{osc}
\DeclareMathOperator*{\spt}{supp}
\DeclareMathOperator{\divergence}{div}
\newcommand{\capacity}{\mathrm{cap}}
\newcommand{\cl}{\overline}
\newcommand{\loc}{\mathrm{loc}}

\title[Parabolic equations with interior Morrey data]
{Global H\"{o}lder solvability for parabolic equations with interior Morrey data}
\author{Takanobu Hara}
\date{October 27, 2026}
\subjclass[2020]{35K10, 35K20, 35B65, 31B25}
\keywords{parabolic equation, H\"{o}lder regularity, Morrey measure, capacity density condition, lower-order term, {time continuation}}

\begin{document}
\begin{abstract}
We prove global H\"{o}lder solvability for uniformly parabolic divergence-form equations
with locally finite measure data under an interior Morrey growth condition and a capacity density condition on the spatial domain. 
The parabolic distance to the initial and lateral boundary is assumed bounded on the source support. 
For nonnegative data, we establish necessity results that demonstrate the sharpness of the growth and domain assumptions. 
We also derive estimates for divergence forcing, prove solvability for equations with lower-order terms, and obtain convergence rates in periodic homogenization.
\end{abstract}
\maketitle

\section{Introduction}\label{sec:introduction}
\enlargethispage{4pt}
Let $D\subset\R^n$, $n\ge2$, be a domain, and let $0<T\le\infty$.
We consider the Cauchy--Dirichlet problem
\begin{equation}\label{eqn:intro-equation}
\begin{cases}
\mathcal Hu:=\partial_tu-\divergence(A\nabla u)=\nu&\text{in }D\times(0,T),\\
u=0&\text{on }\partial D\times(0,T),\\
u(\cdot,0)=0&\text{in }D.
\end{cases}
\end{equation}
Here $A=A(X)$ is a measurable real $n\times n$ matrix, not necessarily symmetric, satisfying
\begin{equation}\label{eqn:A}
 A(X)\zeta\cdot\zeta\ge |\zeta|^2,
 \qquad |A(X)\zeta|\le L|\zeta|
\end{equation}
for almost every $X=(x,t)\in D\times(0,T)$ and every $\zeta\in\R^n$, where $L\ge1$.
The datum $\nu$ is a signed Radon measure that is locally finite in $D\times(0,T)$.
Its total variation is not assumed to be finite up to the parabolic boundary.

{
The De Giorgi--Nash--Moser theory gives interior H\"older regularity for homogeneous divergence-form equations with bounded measurable coefficients; see \cite{LSU,Lieberman1996,Moser1964}.
For inhomogeneous equations, Gaussian bounds for fundamental solutions \cite{Aronson1967,Aronson1968} and homogeneous oscillation estimates motivate a Morrey growth condition on the forcing measure.
}
Suppose that, for some $0<\beta\le1$ and $M<\infty$,
\begin{equation}\label{eqn:morrey}
 |\nu|(Q(X,r))\le Mr^{n+\beta}
\end{equation}
for every $X\in D\times(0,T)$ and $0<r<\delta(X)/4$, where
\begin{equation}\label{eqn:geometry}
\begin{split}
 d_p(X,Y)&:=\max\{|x-y|,\sqrt{|t-s|}\},\\
 Q(X,r)&:=B(x,r)\times(t-r^2,t),\\
 \delta(X)&:=\min\{\dist(x,\partial D),\sqrt t\}.
\end{split}
\end{equation}
{
These cylinders are strictly interior: their radii are controlled by the parabolic distance to the initial and lateral boundary.
Condition~\eqref{eqn:morrey} gives interior H\"older regularity through local heat-potential and homogeneous oscillation estimates.
Conversely, for nonnegative $\nu$, a global H\"older bound for a solution implies a growth estimate of the form \eqref{eqn:morrey} with a positive exponent.
For the elliptic counterpart, see \cite{MR998128}; we give the parabolic necessity argument in Lemma~\ref{lem:necessity}.
}

{
For a homogeneous equation, extending interior H\"older regularity to the boundary depends on the geometry of the complement of $D$.
Classical estimates use an exterior volume density condition; see \cite{LSU,Lieberman1996}.
Capacitary approaches allow substantially more general boundaries.
A thermal-capacity Wiener criterion for the heat equation appears in \cite{EvansGariepy1982}, while results of Ziemer \cite{MR563383} and subsequent work \cite{GariepyZiemer1982} concern boundary continuity for more general coefficients.
Quantitative capacitary estimates are also available; see \cite{BiroliMosco1985}.
Here we impose the spatial capacity density condition (CDC): there is $\gamma>0$ such that
}
\begin{equation}\label{eqn:CDC}
 \capacity(\overline{B(\xi,r)}\setminus D,B(\xi,2r))
 \ge\gamma\capacity(\overline{B(\xi,r)},B(\xi,2r))
\end{equation}
{
for every $\xi\in\partial D$ and $r>0$.
The CDC gives uniform H\"older decay at the lateral boundary for homogeneous solutions with zero Dirichlet values; the initial face is controlled by the corresponding initial-time estimates.
Its geometric significance is already apparent in the elliptic theory: on bounded regular domains, preservation of H\"older regularity by harmonic extension with some positive exponent implies uniform fatness of the complement \cite{MR1924196}.
For cylindrical domains satisfying the spatial CDC and with bounded distance to the spatial boundary, the global H\"older estimate for compatible H\"older initial and lateral boundary data follows by an elementary argument; see \cite[Theorems~6.1 and~6.2]{haraParabolic}.
For possibly time-dependent domains, a related sufficient criterion formulated using time-backwards capacity density is given in \cite[Theorem~1.7 and Corollary~1.11]{HPHK2025}.

These boundary estimates suggest a more specific question for \eqref{eqn:intro-equation}.
Under the CDC, does the interior Morrey condition \eqref{eqn:morrey} suffice for global H\"older regularity, or must the same growth bound also hold on cylinders meeting the parabolic boundary?

An example already covered by existing theories illustrates the difference between the interior Morrey condition \eqref{eqn:morrey} and the requirement that the same growth bound hold also on cylinders meeting the parabolic boundary.
The distance-weighted data below can be treated within existing approaches based on weighted estimates and Dirichlet heat-kernel estimates; see \cite{Michael1977,Cho2010} and \cite{Davies1987,Zhang2002,Hirata2013} for these approaches.
For example, let $D$ be bounded and smooth, let $T<\infty$, and consider the locally finite measure
\[
 d\nu(X)=\delta(X)^{\beta-2}\,dX,\qquad 0<\beta<1.
\]
Since $\delta$ is comparable to $\delta(X)$ on an admissible cylinder $Q(X,r)$, this datum satisfies \eqref{eqn:morrey}.
Nevertheless, its total variation is infinite in every neighborhood of a nontrivial open portion of the lateral boundary away from time zero.
In particular, it cannot be extended across such a boundary portion as a locally finite Radon measure.
Thus the same Morrey growth bound on cylinders meeting the boundary would exclude data allowed by \eqref{eqn:morrey}.

}

{
The classical interior regularity theory and the barrier method complement each other.
We show that their combination yields global H\"older regularity under the same growth condition required in the interior, provided \eqref{eqn:CDC} holds and the source has finite support depth.
Our approach uses the strong barrier constructed for the principal parabolic operator in \cite{haraParabolic}.
Following Ancona's elliptic construction \cite{MR856511}, the barrier exploits the CDC to produce, for a suitable positive exponent $\alpha_{\mathrm{ss}}$, a supersolution $S$ satisfying
\[
 S\asymp\delta^{\alpha_{\mathrm{ss}}},\qquad
 \mathcal HS\ge c\delta^{\alpha_{\mathrm{ss}}-2}.
\]
The comparison principle then combines this boundary decay with the local potential estimate obtained from \eqref{eqn:morrey}.
This construction requires no regularity of the boundary beyond the CDC and no integrability of $\nu$ across the boundary.
The elliptic origins and related applications of strong barriers are discussed in \cite{HaraStrongBarrier,HaraEllipticMorrey}; parabolic Hardy inequalities in related settings were studied in \cite{SeoHeat}.

We prove that the interior growth condition \eqref{eqn:morrey} suffices for a bounded globally H\"older continuous solution, provided \eqref{eqn:CDC} holds and the source has finite support depth,
}
\begin{equation}\label{eqn:support-depth}
 \|\delta\|_{L^\infty(\spt\nu)}<\infty.
\end{equation}
{
The support is understood relative to $D\times(0,T)$.
The latter requirement is automatic when $T<\infty$ or the distance to $\partial D$ is bounded on $D$; it is a global scale condition, not a restriction on the accumulation of mass near the boundary.
The principal result is as follows.
}

\begin{theorem}\label{thm:main}
Assume \eqref{eqn:A}, \eqref{eqn:morrey}, \eqref{eqn:CDC}, and \eqref{eqn:support-depth}.
Then there exists a unique bounded weak solution
\[
 u\in V_{\loc}(D\times(0,T))\cap C(\overline{D\times(0,T)})
\]
to \eqref{eqn:intro-equation}.
Moreover,
\begin{equation}\label{eqn:main-estimate}
 |u(X)|
 \le
 CM\|\delta\|_{L^\infty(\spt\nu)}^{\beta-\ass}
 \min\left\{\delta(X),\|\delta\|_{L^\infty(\spt\nu)}\right\}^{\ass}
\end{equation}
and
\begin{equation}\label{eqn:hoelder-estimate}
 |u(X)-u(Y)|\le CM\|\delta\|_{L^\infty(\spt\nu)}^{\beta-\agh}
 d_p(X,Y)^{\agh}
\end{equation}
for all points $X$ and $Y$ in $\overline{D\times(0,T)}$, where
{$0<\agh\le\ass<\beta$ and $C$, $\ass$, and $\agh$ depend only on $n$, $L$, $\gamma$, and $\beta$.}
{For $\nu=0$, the solution is zero and the estimates are understood accordingly.}
\end{theorem}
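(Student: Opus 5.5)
By scaling $X\mapsto X/R$ with $R:=\|\delta\|_{L^\infty(\spt\nu)}$, and dividing by $M$, we may assume $R=1$ and $M=1$. The plan is to build the solution as a limit of solutions with truncated data, and to control those approximants uniformly by a comparison argument. The comparison function combines two ingredients: the strong barrier $S$ from \cite{haraParabolic}, with $S\asymp\delta^{\ass}$ and $\mathcal HS\ge c\,\delta^{\ass-2}$, and a local potential estimate coming from \eqref{eqn:morrey}. For the approximation, let $\nu_k$ be the restriction of $\nu$ to $\{\delta>1/k\}\cap\{|X|<k\}$. Each $\nu_k$ is a finite measure supported at positive distance from the parabolic boundary. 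It therefore has a unique weak solution $u_k$ in the energy class, vanishing on the lateral boundary and at $t=0$. By linearity we may treat $\nu^+$ and $\nu^-$ separately, so it suffices to consider $\nu\ge0$.

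\textbf{Step 1 (pointwise bound).} The goal is $0\le u_k\le C\min\{\delta,1\}^{\ass}$ uniformly in $k$. Cover $\spt\nu_k$ by a Whitney-type family of interior cylinders $Q_j=Q(X_j,r_j)$ with $r_j\asymp\delta(X_j)$. The Morrey bound holds on cylinders of radius below $\delta/4$, so a covering argument gives $\nu(Q_j)\lesssim r_j^{n+\beta}$. Split $\nu_k=\sum_j\nu_k\llcorner Q_j$ (disjointified), and let $w_j$ be the solution on a concentric enlarged cylinder $Q_j^*$ with datum $\nu_k\llcorner Q_j$. The Aronson Gaussian bounds and \eqref{eqn:morrey} (a local heat-potential estimate) give $0\le w_j\lesssim r_j^{\beta}$, and $w_j$ is H\"older with exponent $\min(\beta,\alpha_0)$ inside. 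Since $\beta>\ass$, we get $r_j^\beta\le r_j^{\ass}\cdot r_j^{\beta-\ass}$, and this is what makes a supersolution built from the barrier dominate.

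Concretely, I would use a comparison function of the form $v=K S+\sum_j\phi_j$, where $\phi_j$ is a cut-off version of $w_j$. The errors from the cut-off live in annular regions where $\delta\asymp r_j$. There they are bounded by $r_j^{\beta-2}\lesssim\delta^{\ass-2}$, using $\delta\le1$ on the support and $\beta>\ass$, and $K\mathcal HS$ absorbs them. Bounded overlap of the Whitney cylinders makes the sum finite. Away from the support (where $\delta>1$ possibly), I would use $\min\{S,1\}$-type truncation: the barrier should be taken with respect to depth $1$, so that $\min\{\delta,1\}^{\ass}$ appears. The comparison principle on bounded subdomains, followed by exhaustion, then gives $u_k\le v\lesssim\min\{\delta,1\}^{\ass}$. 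This is \eqref{eqn:main-estimate}.

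\textbf{Step 2 (H\"older estimate).} For $X,Y$, put $r=d_p(X,Y)$. If $r\ge\delta(X)/8$, bound $|u(X)|+|u(Y)|$ by Step 1, which gives at most $Cr^{\ass}$. Otherwise, apply the interior De Giorgi--Nash--Moser oscillation estimate with the Morrey source term on $Q(X,\delta(X)/4)$. This gives
\[
\osc_{Q(X,\rho)}u\le C(\rho/\delta)^{\alpha_0}\sup_{Q(X,\delta/4)}|u|+C\rho^{\beta'}
\]
for suitable $\beta'\le\beta$. Combined with $\sup|u|\lesssim\delta^{\ass}$ (take $\delta\le1$; the case $\delta>1$ is handled similarly, with $1$ in place of $\delta$), this yields $Cr^{\agh}$ with $\agh=\min\{\alpha_0,\ass\}$, or a slightly smaller exponent, by interpolation. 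The estimates are uniform in $k$. Arzel\`a--Ascoli and Caccioppoli bounds in $V_{\loc}$ then let us pass to the limit, and monotonicity in $k$ (for $\nu\ge0$) gives convergence of the whole sequence. Continuity up to $\overline{D\times(0,T)}$ with zero boundary values follows from \eqref{eqn:main-estimate}.

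\textbf{Step 3 (uniqueness).} The difference of two bounded solutions is a bounded weak solution of the homogeneous problem that is continuous up to the boundary with zero values. For uniqueness I would compare it with $\varepsilon$ times a positive supersolution growing at infinity (needed when $D$ or $T$ is unbounded), or use the maximum principle on exhausting bounded sets.

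\textbf{Main obstacle.} I expect the main obstacle to be Step 1: making the Whitney-localized potentials glue into a genuine supersolution, with cut-off errors controlled by $\mathcal HS$. Parabolic cut-offs produce $\partial_t\eta$ and $\nabla\eta\cdot A\nabla w_j$ terms. The latter are measures in divergence form and are not pointwise bounded. I would try first to handle them by comparing in the weak formulation, using the energy bound for $w_j$ on annuli, or alternatively by replacing cut-offs with solving on $Q_j^*$ and extending by comparison with $S$ in the complement.
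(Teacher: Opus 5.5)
\paragraph{Gap in Step~1.} The proposal has a genuine gap in Step~1, exactly at the obstacle you flag: the glued function $v=KS+\sum_j\phi_j$ cannot be shown to be a supersolution. For $\phi_j=\eta_jw_j$, with $\nu_j$ the piece of $\nu_k$ carried by $Q_j$,
\[
 \mathcal H\phi_j=\eta_j\nu_j+w_j\partial_t\eta_j-\nabla\eta_j\cdot A\nabla w_j-\divergence(w_jA\nabla\eta_j).
\]
Only $w_j\partial_t\eta_j$ is a function bounded by $Cr_j^{\beta-2}$. The gradient $\nabla w_j$ has only local $L^2$ (Morrey) control. Because $A$ is merely measurable, $\divergence(w_jA\nabla\eta_j)$ contains $w_j\divergence(A\nabla\eta_j)$, which in general is not even a signed measure. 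Let $E$ be the total cut-off error. Since $\mathcal HS\ge0$ is a Radon measure, $K\mathcal HS+E\ge0$ in distributions would force $E$ to be a signed Radon measure. So no choice of $K$ lets the barrier absorb $E$, either pointwise or in the weak formulation.

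Absorbing such divergence-form errors would need a boundary estimate for divergence data, as in Theorem~\ref{thm:divergence}, which is no easier than the statement itself. Your fallback does not help either: the zero extension of $w_j\ge0$ from $Q_j^*$ is a subsolution, not a supersolution. Without Step~1, neither \eqref{eqn:main-estimate} nor the outer-oscillation bound used in Step~2 is available.

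\paragraph{The missing idea.} The paper uses a ratio argument (Proposition~\ref{prop:boundary-decay}) that needs a single local cylinder and no gluing. Take first $0\le f\in C_c^\infty(D\times(0,T))$ and $\nu=f\,dX$. For $k>\max_{\spt f}u/S$, the function $(u-kS)_+$ vanishes near $\spt f$ and is a subsolution elsewhere. It is also bounded by $\|u\|_{L^\infty}$ and has zero parabolic boundary values. Lemma~\ref{lem:std-comparison} then gives $u\le kS$, so $\Lambda:=\sup u/S$ is attained at some $X_0\in\spt f$.

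Set $r=\delta(X_0)/16$, and let $w$ and $v$ solve $\mathcal Hw=f$ and $\mathcal Hv=1$ in $Q(X_0,r)$ with zero parabolic data. There $\Lambda S-u\ge0$ and $\mathcal H(\Lambda S-u+w)=\Lambda\mathcal HS\ge c\Lambda\delta(X_0)^{\ass-2}$. Comparison therefore gives $\Lambda S-u+w\ge c\Lambda\delta(X_0)^{\ass-2}v$. At $X_0$ the term $\Lambda S-u$ vanishes, and Lemmas~\ref{lem:positive-response} and~\ref{lem:local-potential} yield
\[
 c\Lambda\delta(X_0)^{\ass}\le w(X_0)\le CM\delta(X_0)^{\beta}.
\]
Hence $\Lambda\le CM\|\delta\|_{L^\infty(\spt f)}^{\beta-\ass}$. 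Measures are then reached through smooth densities that keep the Morrey constant and the support depth (Proposition~\ref{prop:approximation}).

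\paragraph{Steps 2 and 3.} Your Step~2 and the limit passage match the paper. In Step~3, note that explicit growing functions are not supersolutions when $A$ is only measurable. The paper instead uses the weighted energy and Gronwall argument of Lemma~\ref{lem:std-comparison}.
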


\begin{corollary}\label{cor:characterization}
Let $\nu$ be a nonnegative locally finite Radon measure. Assume \eqref{eqn:A}, \eqref{eqn:CDC}, and \eqref{eqn:support-depth}.
The following are equivalent:
\begin{enumerate}[label=\textup{(\roman*)}]
\item Condition \eqref{eqn:morrey} holds for some $0<\beta\le1$ and $M<\infty$.
\item Problem \eqref{eqn:intro-equation} has a bounded globally H\"older continuous weak solution with some positive exponent.
\end{enumerate}
\end{corollary}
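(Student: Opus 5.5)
The implication (i)$\Rightarrow$(ii) follows directly from Theorem~\ref{thm:main}. Under \eqref{eqn:A}, \eqref{eqn:morrey}, \eqref{eqn:CDC}, and \eqref{eqn:support-depth}, the theorem gives a bounded weak solution $u\in V_{\loc}(D\times(0,T))\cap C(\overline{D\times(0,T)})$. By \eqref{eqn:hoelder-estimate} this solution is globally H\"older continuous with exponent $\agh>0$. If $\nu=0$, the zero solution trivially qualifies. Nonnegativity of $\nu$ is not used in this direction.

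For (ii)$\Rightarrow$(i), I would invoke the parabolic necessity argument announced in the introduction as Lemma~\ref{lem:necessity}. Let $u$ be a bounded weak solution with $|u(X)-u(Y)|\le K d_p(X,Y)^{\theta}$ for some $\theta>0$. Fix $X=(x,t)$ and $0<r<\delta(X)/4$. Choose a cutoff $\eta\in C_c^\infty$ supported in the enlarged cylinder $B(x,2r)\times(t-4r^2,t+r^2)$, which still lies inside $D\times(0,T)$ because $r<\delta(X)/4$. Take $0\le\eta\le1$, $\eta=1$ on $Q(X,r)$, $|\nabla\eta|\lesssim r^{-1}$, $|\partial_t\eta|\lesssim r^{-2}$, and $|\nabla^2\eta|$ irrelevant. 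Testing the equation with $\eta^2$ gives
\[
 \nu(Q(X,r))\le\int\eta^2\,d\nu=-\iint u\,\partial_t(\eta^2)+\iint A\nabla u\cdot\nabla(\eta^2).
\]
In the first term I would replace $u$ by $u-u(X)$, using $\iint\partial_t(\eta^2)=0$. The H\"older bound then controls it by $K r^{\theta}\cdot r^{-2}\cdot r^{n+2}=Kr^{n+\theta}$. The second term I would bound by $L r^{-1}\iint_{\spt\eta}|\nabla u|$. The Caccioppoli inequality for $u-u(X)$, applied to the equation with the nonnegative measure on its right-hand side, controls $\iint|\nabla u|^2$ over the support cylinder.

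This Caccioppoli step is the main obstacle, because the measure term $\int(u-u(X))\eta^2\,d\nu$ on the right is itself bounded by $Kr^{\theta}\nu(Q')$ for a slightly larger cylinder $Q'$. I would absorb it by one of two routes. The first is an iteration over nested cylinders. The second, cleaner, is to test instead with $\eta^2(u-u(X)+2Kr^\theta)$, which is nonnegative on the support, and compare with the test $\eta^2$. Either way the target is
\[
 \nu(Q(X,r))\le C(K+\|u\|_\infty^{\,\cdot})\,r^{n+\theta/2}
\]
or an estimate of similar form. Concretely, Caccioppoli yields $\iint|\nabla u|^2\lesssim K^2r^{n+2\theta}+Kr^{\theta}\nu(Q')$, and Cauchy--Schwarz then gives
\[
 r^{-1}\iint|\nabla u|\lesssim r^{n/2}\bigl(K^2r^{n+2\theta}+Kr^\theta\nu(Q')\bigr)^{1/2}.
\]
Writing $m(r)=\sup_X\nu(Q(X,r))/r^{n}$ and using Young's inequality, this gives $m(r)\le CKr^{\theta}+\tfrac12 m(2r)$ up to a covering constant. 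That recursion is not directly closable, since $m$ appears at the larger radius, so I would rely on the covering choice to absorb it.

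To close the argument I would exploit the a priori finiteness of $\nu(Q')$, which holds because $\nu$ is locally finite, together with the cruder bound $\nu(Q')\lesssim(K r^\theta+\|u\|_\infty)r^n$ obtained from the first-step test. Feeding this back once gives $\nu(Q(X,r))\le C(K+\|u\|_\infty)r^{n+\theta/2}$, up to harmless factors. If $\theta/2>1$, one simply lowers the exponent. Since $r<\delta(X)/4\le\|\delta\|_{L^\infty(\spt\nu)}$ when the cylinder meets $\spt\nu$, these bounds give \eqref{eqn:morrey} with $\beta=\min\{\theta/2,1\}$, which proves (i).
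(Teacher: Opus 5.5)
\textbf{There is a genuine gap in (ii)$\Rightarrow$(i).} Your (i)$\Rightarrow$(ii) is exactly the paper's argument. The failure is at the step you flagged yourself. After you enlarge the cylinder in the Caccioppoli inequality, you need the scale-invariant bound $\nu(Q')\lesssim(Kr^\theta+\|u\|_\infty)r^n$, and you say it comes from ``the first-step test.'' That test, applied on $Q'$, bounds $\nu(Q')$ by $C\|u\|_\infty r^n$ plus $Cr^{-1}\iint|\nabla u|$ over a still larger cylinder. Controlling that integral needs Caccioppoli again, which brings in $\nu$ on an even larger cylinder, so the argument is circular.

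Local finiteness of $\nu$ does not break the regress. It gives no bound uniform in the center $X$, and $\nu$ may carry unbounded mass near the parabolic boundary, so no finite Morrey constant $M$ comes out. This is also why your recursion $m(r)\le CKr^\theta+\tfrac12 m(2r)$ cannot be closed: it needs control of $m$ at the top admissible scale, uniformly in the center. Your ``cleaner'' route has the wrong sign. With the nonnegative test function $\eta^2(u-u(X)+2Kr^\theta)$, the source term $\int\eta^2(u-u(X)+2Kr^\theta)\,d\nu\ge0$ sits on the right-hand side of the energy identity. It must therefore be bounded, not discarded.

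\textbf{Missing idea: use $\nu\ge0$ through the sign of the test function.} The paper sets $v=u-\sup_{Q(X,2r)}u\le0$, so that $|v|\le CKr^\theta$, and tests with $v\chi^2$. The term $\int v\chi^2\,d\nu$ is then nonpositive and is dropped, together with the terminal energy. This gives $\iint_{Q(X,3r/2)}|\nabla u|^2\le CK^2r^{n+2\theta}$ with no measure term at all. Next the paper tests with $\eta$, where the terminal term $\int v\eta$ is again nonpositive. Cauchy--Schwarz then yields $\nu(Q(X,r))\le CKr^{n+\theta}$, with the same exponent and no dependence on $\|u\|_\infty$.

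Alternatively, your comparison idea can be rescued if you keep the same cutoff in both tests, with no enlargement to $Q'$. First, $|\int\eta^2(u-u(X))\,d\nu|\le CKr^\theta\int\eta^2\,d\nu$. Second, the $\eta^2$-test writes $\int\eta^2\,d\nu$ as $O(Kr^{n+\theta})$ plus a multiple of $\iint\eta|\nabla u||\nabla\eta|$. Third, the resulting cross term is at most $\tfrac14\iint\eta^2|\nabla u|^2+CK^2r^{n+2\theta}$ by Young's inequality, so it is absorbed.

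\textbf{A smaller point.} Your cutoff is supported up to time $t+r^2$, but $r<\delta(X)/4$ does not control $T-t$. For $T<\infty$ this cylinder can leave $D\times(0,T)$. Integrate up to $\tau<t$ and handle the terminal term instead, as the paper does.
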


{
The corollary is a qualitative equivalence for nonnegative measures: neither direction requires the exponent in the growth condition to coincide with the H\"older exponent of the solution.
For signed measures, the theorem gives sufficiency, but no corresponding necessity statement is asserted for the total variation.
The CDC itself admits a separate quantitative characterization.
Section~\ref{sec:cdc-necessity} shows that, for a fixed principal operator, uniform boundary decay estimates for all admissible nonnegative sources and all finite time horizons imply \eqref{eqn:CDC}.
This is stronger than merely requiring the existence of a H\"older solution for each source.
}

\subsection*{Organization of the paper}
{
Section~\ref{sec:preliminaries} records the local potential estimates, boundary estimates, and strong barrier used in the proof.
Section~\ref{sec:principal-proof} proves Theorem~\ref{thm:main} and Corollary~\ref{cor:characterization}, develops estimates for divergence forcing, and presents examples and counterexamples.
Section~\ref{sec:cdc-necessity} gives the quantitative characterization of the CDC.
Section~\ref{sec:boundary} introduces the measure spaces and normalized H\"older norms and treats compatible nonzero boundary data and mixed forcing.
Section~\ref{sec:lower-order} handles lower-order terms by short-time inversion and continuation; related elliptic problems were studied in \cite{HaraEllipticLower}.
Section~\ref{sec:periodic} concerns periodic homogenization and establishes quantitative global H\"older convergence; an elliptic counterpart is discussed in \cite{HaraEllipticHomogenization}.
}

\subsection*{Notation}
The parabolic boundary consists of the initial and lateral faces; the upper time face is excluded. For $E\subset\R^{n+1}$, set
\[
 [u]_{C^{\alpha,\alpha/2}(E)}
 :=\sup_{\substack{X,Y\in E\\X\ne Y}}
 \frac{|u(X)-u(Y)|}{d_p(X,Y)^\alpha}.
\]
Until Section~\ref{sec:boundary}, we use this seminorm and the supremum norm separately. Boundary values are attained continuously. When $T=\infty$, the closure contains only finite-time points. The letter $C$ denotes a positive constant whose value may change. {For nonnegative quantities $a$ and $b$, the notation $a\asymp b$ means that $cb\le a\le Cb$ for positive constants $c$ and $C$ independent of the variables or limiting parameters under consideration.} Vague convergence means convergence against continuous compactly supported functions.

\section{Preliminaries}\label{sec:preliminaries}
\subsection{Capacity and weak solutions}
For an open set $U\subset\R^n$, let $H_0^1(U)$ be the closure of $C_c^\infty(U)$ in $H^1(U)$. For compact $K\subset U$, define
\begin{equation}\label{eqn:relative-capacity}
 \capacity(K,U)=\inf\left\{\int_U|\nabla\varphi|^2\,dx:
 \varphi\in C_c^\infty(U),\ \varphi\ge1\text{ on }K\right\}.
\end{equation}
Condition \eqref{eqn:CDC} is uniform $2$-fatness of the complement; see \cite{Lewis1988,HKM2006}. An exterior corkscrew condition implies it.

For a cylinder $U\times(a,b)$, put
\[
 V(U\times(a,b))=L^\infty((a,b);L^2(U))\cap L^2((a,b);H^1(U)).
\]
Membership in $V_{\loc}$ means membership in this space on every relatively compact cylinder. A weak solution of $\mathcal Hu=\nu$ is a function $u\in V_{\loc}(D\times(0,T))$ such that
\begin{equation}\label{eqn:weak-formulation}
{
 -\iint_{D\times(0,T)}u\partial_t\varphi\,dX+
 \iint_{D\times(0,T)}A\nabla u\cdot\nabla\varphi\,dX
 =\int_{D\times(0,T)}\varphi\,d\nu
}
\end{equation}
for every $\varphi\in C_c^\infty(D\times(0,T))$.
Supersolutions satisfy the inequality with $\ge$ for nonnegative test functions; subsolutions satisfy the reverse inequality.

We use local energy inequalities for bounded continuous solutions. For example, if $\mathcal Hv={\nu}$ on a neighborhood of $Q(X,2r)$, then
\begin{equation}\label{eqn:local-energy}
 \iint_{Q(X,r)}|\nabla v|^2\,dY
 \le Cr^{-2}\iint_{Q(X,2r)}|v|^2\,dY
       +C\|v\|_{L^\infty(Q(X,2r))}{|\nu|}(Q(X,2r)).
\end{equation}
The constant depends only on {$n$ and $L$}. The cutoffs vanish on the lower and lateral faces, and the terminal term is nonnegative and is discarded. For continuous measure-data solutions, the calculation is justified by regularizing the equation on compact subcylinders: the continuous factors converge uniformly in the measure pairing, while their spatial gradients converge in $L^2$. Terminal times can first be chosen outside a null set and then approached from below. For the solutions constructed in Section~\ref{sec:principal-proof}, the same inequality also follows directly from the smooth-density approximations. These observations apply to differences and to the sign-sensitive calculation in Lemma~\ref{lem:necessity}.

\begin{lemma}\label{lem:std-comparison}
Let $U$ be open and $a<b<\infty$. Let {$u$ and $v$ be bounded elements of $V_{\loc}(U\times(a,b))\cap {C}(\overline{U\times(a,b)})$ and} satisfy $\mathcal Hu\le\mathcal Hv$ weakly. If $u\le v$ on the parabolic boundary, then $u\le v$ throughout the cylinder.
\end{lemma}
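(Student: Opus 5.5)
The plan is the standard Steklov-averaging energy argument applied to $w:=(u-v-\varepsilon)_+$ for a fixed $\varepsilon>0$. By linearity, $w_0:=u-v$ is a bounded continuous weak subsolution, that is, $\mathcal H w_0\le 0$ weakly in $U\times(a,b)$, with $w_0\le0$ on the parabolic boundary. Fix $\varepsilon>0$. Continuity of $w_0$ on $\overline{U\times(a,b)}$ and $w_0\le 0$ on the parabolic boundary give two things on every truncated cylinder $U\times(a,\tau)$, $\tau<b$: the set $\{w_0\ge\varepsilon\}\cap(\overline U\times[a,\tau])$ stays a positive distance from the lateral boundary $\partial U\times[a,\tau]$, and it stays away from the initial face $\overline U\times\{a\}$. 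When $U$ is unbounded, I would also need a spatial cutoff; boundedness of $u$ and $v$ is what should make that work.

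First assume $U$ is bounded. Then $w=(w_0-\varepsilon)_+$ has support in $\overline{U}\times[a,\tau]$ compactly contained in $U\times(a,b]$ away from the lateral and initial faces. It therefore lies in $L^2((a,\tau);H^1_0(U))$, since $V_{\loc}$ controls $\nabla w_0$ on compact subcylinders, and $w$ vanishes near $t=a$. I would test the inequality $\mathcal Hw_0\le0$ with the Steklov average of $w$, or with $w$ itself after Steklov regularization in time. The test function is admissible because it is nonnegative and supported in a compact subset after a time cutoff near $\tau$. Passing to the limit in the standard way gives, for a.e. $\tau$,
\[
\tfrac12\int_U w(x,\tau)^2\,dx+\int_a^\tau\!\!\int_U A\nabla w_0\cdot\nabla w\,dX\le 0 .
\]
On the support of $\nabla w$ we have $\nabla w=\nabla w_0$. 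The ellipticity bound in \eqref{eqn:A}, which holds even for nonsymmetric $A$, then gives $A\nabla w\cdot\nabla w\ge|\nabla w|^2\ge0$. Hence $\int_U w(\cdot,\tau)^2=0$ for a.e. $\tau$, and continuity gives $w\equiv0$, so $u-v\le\varepsilon$. Letting $\varepsilon\to0$ finishes the bounded case.

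For unbounded $U$, the support of $w$ may be unbounded, so $w$ need not be square integrable. I would multiply by a spatial cutoff $\eta_R$ with $\eta_R=1$ on $B(0,R)$, support in $B(0,2R)$, and $|\nabla\eta_R|\le C/R$, and test with $w\eta_R^2$. The cross term is absorbed by Young's inequality, leaving
\[
\sup_\tau\int w^2\eta_R^2+\iint|\nabla w|^2\eta_R^2\le C\iint_{\mathrm{spt}\,w}w^2|\nabla\eta_R|^2 .
\]
With $w$ bounded, the right side is of order $R^{n-2}(b-a)\|w\|_\infty^2$, which does not vanish for $n\ge2$. This is the main obstacle. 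I would first try the weight $e^{-\lambda|x|^2/(b-a)}$-type (or simply $\eta=e^{-|x|}$-type) truncation typical of Tychonoff/Phragm\'en--Lindel\"of arguments for bounded solutions. Alternatively I would run the argument on short time slabs of length $h$ small relative to $L$, iterating in time. In that scheme the cutoff error is controlled by $h\|w\|_\infty^2$ times a weighted volume, and an exponential weight $\eta=e^{-\mu\sqrt{1+|x|^2}}$ with $\mu\to0$ makes every term finite and shows $\int w^2\eta^2$ satisfies a Gronwall inequality with zero initial value. Bounded $u,v$ is exactly what makes these weighted integrals finite, so I expect the proof in the paper to reduce to this.
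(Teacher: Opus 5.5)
Your proposal is correct and follows the paper's proof: the truncation $(u-v-\epsilon)_+$, time regularization to justify the test, spatial cutoffs $\eta_j$ combined with the weight $\phi(x)=\exp(-\sqrt{1+|x|^2})$, and Gronwall's inequality with zero initial value. The weight satisfies $|\nabla\phi|\le\phi$, so the cross term is absorbed and the cutoff error is $O(j^{-2})$ times a weighted tail. A fixed weight exponent suffices (the paper takes $\mu=1$), so neither $\mu\to0$ nor short time slabs are needed.
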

{
\begin{proof}
Fix $\epsilon>0$ and use the positive truncation $(u-v-\epsilon)_+$. On each bounded spatial set, continuity and the boundary inequality imply that this truncation vanishes near the initial and lateral faces. After a compact spatial cutoff, it therefore has zero lateral trace and zero initial value. Time regularization then justifies testing on the resulting bounded cylinder; the spatial cutoff is removed by the weighted estimate below. If $U$ is unbounded, use spatial cutoffs $\eta_j$ on $B(0,2j)$, equal to one on $B(0,j)$, and the weight
${w}(x)=\exp(-\sqrt{1+|x|^2})$.
Since $|\nabla{w}|\le{w}$, the weighted energy inequality bounds the gradient cross term by half the energy plus a constant times the weighted square of the truncation. The additional cutoff error is at most
\[
 Cj^{-2}{\|u-v\|_{L^\infty(U\times(a,b))}}^2\int_{|x|>j}{w}(x)\,dx.
\]
{
It tends to zero. After letting $j\to\infty$, the weighted energy inequality reads
\[
 \int_U (u(x,t)-v(x,t)-\epsilon)_+^2w(x)\,dx
 \le C\int_a^t\int_U (u(x,s)-v(x,s)-\epsilon)_+^2w(x)\,dx\,ds.
\]
The initial term is zero. {Gronwall's inequality} therefore gives $(u-v-\epsilon)_+=0$. Letting $\epsilon\downarrow0$ proves $u\le v$.
}
\end{proof}
}

\subsection{Local estimates}
The De Giorgi or Moser iteration gives local $L^2$--$L^\infty$ estimates for weak solutions of the homogeneous equation and its adjoint; see \cite{Moser1964}. Together with energy estimates and the reproducing property of the fundamental solution, these local estimates yield the Gaussian upper bound; see \cite[Section~7]{Aronson1968}. Extend $A$ by the identity outside its original domain. Its fundamental solution satisfies the Aronson estimate \cite{Aronson1967}
\begin{equation}\label{eqn:aronson}
 0\le p(x,t;y,s)\le C(t-s)^{-n/2}
 \exp\left(-\frac{|x-y|^2}{C(t-s)}\right)
 \le C d_p((x,t),(y,s))^{-n}
\end{equation}
for every $s<t$. The last bound holds both when $|x-y|\le\sqrt{t-s}$ and when $|x-y|>\sqrt{t-s}$: in the latter case use the boundedness of $a^{n/2}e^{-a/C}$ for $a>0$. Dirichlet kernels on subdomains are bounded above by this kernel.

{
For the regularity theory with Morrey data, see \cite[Theorem~6.29]{Lieberman1996}. We recall the local heat-potential bound in the form used below.
\begin{lemma}\label{lem:local-potential}
Let $0<\beta\le1$, let $U$ be bounded and open with $\operatorname{diam}U\le R$, and let $b-a\le R^2$. Suppose the zero extensions of $f\in L^\infty(U\times(a,b))$ and ${\bm{F}}\in L^2(U\times(a,b);\R^n)$ satisfy
\[
 \|f\|_{L^1(Q(Y,r))}\le Mr^{n+\beta},\qquad
 \|{\bm{F}}\|_{L^2(Q(Y,r))}\le M'r^{n/2+\beta}
\]
for all $Y\in\R^{n+1}$ and $r>0$. {The solution of $\mathcal Hw=f+\divergence\bm{F}$ with zero initial and Dirichlet boundary values satisfies}
\begin{equation}\label{eqn:local-potential}
 \|w\|_{L^\infty(U\times(a,b))}\le C(M+M')R^\beta,
\end{equation}
where $C$ depends only on {$n$, $L$, and $\beta$}. For a local cylinder $Q(X,R)$ with $R\le\delta(X)/16$, it suffices that the two growth bounds hold on admissible cylinders in the ambient domain.
\end{lemma}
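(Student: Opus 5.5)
We plan to represent $w$ by the Dirichlet heat kernel on $U\times(a,b)$ and dominate it by the Aronson kernel \eqref{eqn:aronson}. For the $f$ part, with zero extension,
\[
 |w_f(X)|\le\iint_{U\times(a,t)}p_U(X;Y)|f(Y)|\,dY\le C\iint_{\{Y:\,s<t,\ |x-y|<R',\ t-s<R^2\}} d_p(X,Y)^{-n}|f(Y)|\,dY ,
\]
where $R'\le R$ because $\operatorname{diam}U\le R$, and $t-s\le b-a\le R^2$. All the relevant points therefore lie in a single backward cylinder $Q(X,2R)$, up to a harmless enlargement: the cylinder as defined is open at the top time $t$, but $Y$ ranges over $s<t$, so this is fine. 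We then decompose dyadically, setting $E_k=Q(X,2^{1-k}R)\setminus Q(X,2^{-k}R)$ for $k\ge0$. On $E_k$ we have $d_p(X,Y)\ge 2^{-k}R/\sqrt2$, up to a constant coming from the $\sqrt{|t-s|}$ scaling, so $d_p^{-n}\le C(2^{-k}R)^{-n}$. The Morrey bound gives $\|f\|_{L^1(E_k)}\le M(2^{1-k}R)^{n+\beta}$. Summing,
\[
 |w_f(X)|\le C M\sum_{k\ge0}(2^{-k}R)^{\beta}\le C(\beta)MR^\beta ,
\]
with the sum converging because $\beta>0$. Boundedness of $f$ guarantees that $w_f$ is a genuine bounded weak solution and that the representation formula is valid.

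For the divergence part we cannot simply integrate the kernel gradient pointwise. Our first attempt is to write $w_F(X)=-\iint \nabla_y p_U(X;Y)\cdot\bm F(Y)\,dY$ and use a Caccioppoli estimate for the adjoint kernel on the dyadic annuli. The adjoint kernel $Y\mapsto p_U(X;Y)$ solves the adjoint equation away from $X$ and vanishes on $\partial U$, with zero extension giving a subsolution-type function. The local energy inequality \eqref{eqn:local-energy} for the adjoint equation, applied on a cover of $E_k$ by cylinders of size comparable to $2^{-k}R$, should give
\[
 \|\nabla_y p_U(X;\cdot)\|_{L^2(E_k)}\le C(2^{-k}R)^{-1}\|p_U(X;\cdot)\|_{L^2(\tilde E_k)}\le C(2^{-k}R)^{-1-n}(2^{-k}R)^{(n+2)/2},
\]
where $\tilde E_k$ is a slightly enlarged annulus. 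Cauchy--Schwarz with $\|\bm F\|_{L^2(E_k)}\le CM'(2^{-k}R)^{n/2+\beta}$ then gives a term of order $M'(2^{-k}R)^{\beta}$, and summing over $k$ yields $CM'R^\beta$. Since $\bm F$ is only in $L^2$, we would justify the formula by approximating with smooth $\bm F_j$ and using the energy estimate to pass to the limit, noting that the bound is uniform. An alternative that avoids the kernel gradient is a duality/Campanato iteration as in \cite[Theorem~6.29]{Lieberman1996}, and we would fall back on it if the boundary behaviour of the adjoint kernel near $\partial U$ causes trouble in the Caccioppoli step.

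The main obstacle is precisely that Caccioppoli step near $\partial U$ and at the singular scale $k$ large: we need energy bounds for the adjoint Dirichlet kernel on cylinders meeting $\partial U$. These are handled by extending the kernel by zero, so that the extension is an adjoint subsolution in $\R^n$ and the cutoff energy estimate holds without boundary regularity. For the final clause, when $Q(X,R)$ with $R\le\delta(X)/16$ is taken as $U\times(a,b)$, any cylinder $Q(Y,r)$ meeting it with $r$ small relative to $R$ is admissible, since $\delta(Y)\ge\delta(X)-2R>4r$. Larger cylinders intersect $Q(X,R)$ in a set covered by boundedly many admissible cylinders of radius comparable to $R$, so the global growth bounds for the zero extensions follow from the admissible ones with a multiplicative constant.
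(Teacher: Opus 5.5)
Your proposal is correct and follows the paper's argument. For $f$ you use the kernel representation with the Aronson bound summed over dyadic backward annuli. For $\bm F$ you use the adjoint Caccioppoli bound $\|\nabla_y p_U(X,\cdot)\|_{L^2(\text{annulus of radius }\rho)}\le C\rho^{-n/2}$, which is exactly \eqref{eqn:local-kernel-gradient}, followed by Cauchy--Schwarz. The local clause is handled by recentering plus a total-mass bound.

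One detail in the local clause needs tightening. A center $Y=(y,s)$ of a cylinder meeting $Q(X,R)$ may have $s\ge T$, so $Y$ need not itself be admissible. Recenter at $(y,\min\{s,t\})$, whose backward cylinder of radius $r$ still contains $Q(Y,r)\cap Q(X,R)$; this is what the paper does with $\widehat s$.
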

{
For $X=(x,t)$, the estimate follows from \eqref{eqn:aronson} and the standard adjoint Caccioppoli bound
}
\begin{equation}\label{eqn:local-kernel-gradient}
 \|\nabla_y p_U(X,\cdot)\|_{L^2((Q(X,r)\setminus Q(X,r/2))\cap(U\times(a,t)))}
 {
\le Cr^{-n/2}.
}
\end{equation}
Apply the kernel representation and sum over concentric annuli. {
To verify the local version, restrict either datum to $Q(X_0,R)$, where $R\le\delta(X_0)/16$, and extend it by zero. For $r\le R$, if $Q(Y,r)$ meets this cylinder, choose $z$ in the intersection of their spatial balls and set $\widehat s=\min\{s,t_0\}$, where $Y=(y,s)$ and $X_0=(x_0,t_0)$. Then
\[
 Q(Y,r)\cap Q(X_0,R)\subset Q((z,\widehat s),2r),\qquad
 \delta(z,\widehat s)\ge\delta(X_0)-R>8R.
\]
The center $(z,\widehat s)$ is interior, so the original growth bound applies at radius $2r$. For $r>R$, use the total mass or $L^2$ norm on $Q(X_0,R)$. Hence restriction and zero extension preserve either growth bound with a constant depending only on {$n$ and $\beta$}. The same argument applies to a locally finite measure, with total variation in place of the $L^1$ norm. For a terminal face at $T$, take limits from below.
} {When $\bm{F}=0$, boundedness of $f$ gives a continuous representative of $w$ at the upper center.}
}

The weak Harnack inequality for nonnegative supersolutions and the resulting homogeneous oscillation estimate are given in \cite[Chapter~VI]{Lieberman1996}; see also \cite{Moser1964,Moser1971}.
Choose $\ain$, depending only on {$n$, $L$, and $\beta$}, so that
\[
 0<\ain<\min\{\beta,1\}
\]
and $\ain$ is strictly smaller than an exponent in the homogeneous oscillation estimate.

\begin{lemma}\label{lem:interior-forced}
Let $X_0\in D\times(0,T)$ and $0<R\le\delta(X_0)/16$.
{Suppose that $u$ is a locally bounded weak solution of $\mathcal Hu=f+\divergence\bm{F}$, where $f$ is bounded and $\bm{F}\in L^2_{\mathrm{loc}}$, and that
\[
 \|f\|_{L^1(Q(Y,r))}\le Mr^{n+\beta},\qquad
 \|\bm{F}\|_{L^2(Q(Y,r))}\le M'r^{n/2+\beta}
\]
for $Y\in D\times(0,T)$ and $0<r<\delta(Y)/4$. Then $u$ has a continuous representative satisfying}
\begin{equation}\label{eqn:interior-forced}
 \osc_{Q(X_0,r)}u\le C(r/R)^{\ain}
 \left(\osc_{Q(X_0,R)}u+{(M+M')R^\beta}\right)
\end{equation}
for $0<r\le R$. The constant depends only on {$n$, $L$, and $\beta$} and is independent of the supremum norm of the forcing.
\end{lemma}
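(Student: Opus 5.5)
Fix $X_0$ and $R\le\delta(X_0)/16$, and split $u=w+h$ on the cylinder $Q(X_0,R)$. Here $w$ solves $\mathcal Hw=f+\divergence\bm F$ in $Q(X_0,R)$ with zero initial and lateral values, and $h=u-w$ is a weak solution of the homogeneous equation there. The plan is to estimate $w$ in $L^\infty$ by the local potential bound and to estimate $h$ by the homogeneous oscillation estimate. The two bounds are then combined at every scale by an iteration, and the iteration absorbs the loss from passing $\beta$ to the smaller exponent $\ain$.

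\textbf{Step 1 (one-scale decay).} For $0<\rho\le R$, Lemma~\ref{lem:local-potential} (local version, $\rho\le\delta(X_0)/16$) applies on $Q(X_0,\rho)$ and gives
\[
\|w_\rho\|_{L^\infty(Q(X_0,\rho))}\le C(M+M')\rho^\beta,
\]
with $C$ independent of $\|f\|_\infty$; boundedness of $f$ is used only qualitatively. Let $\alpha_0>\ain$ be the homogeneous oscillation exponent. For $h_\rho=u-w_\rho$ and $0<\theta<1$,
\[
\osc_{Q(X_0,\theta\rho)}h_\rho\le C_0\theta^{\alpha_0}\osc_{Q(X_0,\rho)}h_\rho .
\]
The comparison principle, Lemma~\ref{lem:std-comparison}, is applied on subcylinders after approximating $u$, so we may assume $u$ is continuous there; this gives $\osc_{Q(X_0,\rho)}h_\rho\le\osc_{Q(X_0,\rho)}u+2\|w_\rho\|_\infty$. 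Hence
\[
\phi(\theta\rho)\le C_0\theta^{\alpha_0}\phi(\rho)+C_1(M+M')\rho^\beta,
\qquad \phi(\rho):=\osc_{Q(X_0,\rho)}u .
\]

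\textbf{Step 2 (iteration).} Fix $\theta$ so small that $C_0\theta^{\alpha_0}\le\theta^{\ain}$; this is possible because $\ain<\alpha_0$. Put $\rho_k=\theta^kR$. Iterating gives
\[
\phi(\rho_k)\le\theta^{k\ain}\phi(R)+C_1(M+M')R^\beta\sum_{j<k}\theta^{(k-1-j)\ain}\theta^{j\beta}.
\]
Since $\ain<\beta$, the sum is bounded by $C\theta^{k\ain}$. For $\rho_{k+1}<r\le\rho_k$, monotonicity of $\phi$ then yields \eqref{eqn:interior-forced}, with constants depending on $n$, $L$ and $\beta$ through $\theta$ and $\alpha_0$.

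\textbf{Step 3 (continuous representative).} Apply the estimate at every interior point with a suitable $R$. Together with local boundedness of $u$, this shows that the oscillation over shrinking cylinders tends to zero. To obtain a genuine modulus of continuity in $d_p$ and not just decay along backward cylinders, cover a neighbourhood of each point by cylinders $Q(Y,r)$ centred at nearby points whose tops lie slightly later. This gives Lebesgue-point convergence and hence a continuous representative.

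\textbf{Main obstacle.} The delicate point is Step 1. The quantity $\osc u$ must be defined pointwise before continuity is known, and $h_\rho$ must satisfy the homogeneous oscillation estimate even though $u$ is only in $V_{\loc}$. I would handle this by working with essential oscillation and applying the De Giorgi--Nash--Moser estimate to $h_\rho$, which is locally continuous. This is consistent because $w_\rho$ is continuous at the upper centre when $\bm F=0$; when $\bm F\ne0$, the $L^\infty$ bound suffices for an essential-oscillation version of the argument.
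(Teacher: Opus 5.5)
Your proposal is correct and follows the paper's proof essentially step for step: you split $u=w+h$ on $Q(X_0,\rho)$, bound $\|w\|_{L^\infty}\le C(M+M')\rho^\beta$ by the local version of Lemma~\ref{lem:local-potential}, and apply homogeneous oscillation decay to $h$ with an exponent above $\ain$. You then iterate along $\rho=\theta^kR$ with the geometric series controlled by $\ain<\beta$, use monotonicity at intermediate radii, and obtain the continuous representative from essential oscillations; your choice $C_0\theta^{\alpha_0}\le\theta^{\ain}$ in place of the paper's $\tfrac12\theta^{\ain}$ is immaterial. The only superfluous item is the appeal to the comparison principle and an approximation of $u$ in Step~1, since $\osc h_\rho\le\osc u+2\|w_\rho\|_\infty$ is just the triangle inequality for essential oscillations, which is how you and the paper handle it anyway.
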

\begin{proof}
{We first work with essential oscillations.} Fix $0<a\le R$ and let $w$ solve the equation with the given forcing in $Q(X_0,a)$ and with $w=0$ on $\partial_pQ(X_0,a)$. {Lemma~\ref{lem:local-potential} gives
\[
 \|w\|_{L^\infty(Q(X_0,a))}\le {C(M+M')a^\beta}.
\]}

The function $h=u-w$ solves the homogeneous equation in $Q(X_0,a)$. {Let $C_0$ be the constant in the homogeneous oscillation estimate for $h$, with its exponent chosen strictly larger than $\ain$.} Choose $0<\theta<1$ so that the homogeneous contraction on $Q(X_0,\theta a)$ is at most $\tfrac12\theta^{\ain}$. It follows that
\[
 \osc_{Q(X_0,\theta a)}u
 \le\tfrac12\theta^{\ain}\osc_{Q(X_0,a)}u+{C(M+M')a^\beta}.
\]
Iterate with $a=\theta^kR$. Since $\ain<\beta$, the resulting geometric series is bounded by a constant times ${(M+M')R^\beta}$. This proves \eqref{eqn:interior-forced} at these radii. Monotonicity of oscillation gives the estimate at intermediate radii. {The decay of essential oscillations gives a continuous representative.} Continuity extends the same estimate to the upper face by taking limits from below.
\end{proof}

\begin{lemma}\label{lem:positive-response}
Let $v$ solve $\mathcal Hv=1$ in $Q(X,R)$ with zero parabolic boundary values. Then
\[
 v(X)\ge cR^2,
\]
where $c>0$ depends only on {$n$ and $L$}.
\end{lemma}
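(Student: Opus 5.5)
The plan is to compare $v$ with an explicit subsolution that vanishes on the parabolic boundary of $Q(X,R)$ and equals a fixed multiple of $R^2$ at the top center. Lemma~\ref{lem:std-comparison} then transfers that value to $v$.

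\textbf{Reduction.} Translation and the parabolic scaling $y\mapsto x+R y$, $s\mapsto t+R^2 s$ send $v$ to $\tilde v=R^{-2}v$, which solves $\mathcal{\tilde H}\tilde v=1$ in $Q(0,1)=B(0,1)\times(-1,0)$. The new coefficient matrix satisfies \eqref{eqn:A} with the same $L$. So it suffices to take $R=1$ and $X=(0,0)$ and to prove $v(0,0)\ge c$.

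\textbf{Barrier.} Put $\phi(y)=(1-4|y|^2)_+^2$ and, for $s\in(-1,0)$,
\[
\psi(y,s)=\lambda\,(s+1)\,\phi(y),
\]
with $\lambda>0$ to be chosen. This $\psi$ is $C^1$ with $\nabla\phi$ Lipschitz and vanishes on the parabolic boundary of $Q(0,1)$. Its value at the top center is $\psi(0,0)=\lambda$.

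Since $A$ is only measurable, the weak operator is
\[
\langle\mathcal H\psi,\varphi\rangle=\iint\bigl(\partial_s\psi\,\varphi+A\nabla\psi\cdot\nabla\varphi\bigr).
\]
For a nonnegative test function $\varphi$ this gives the bound
\[
\langle\mathcal H\psi,\varphi\rangle\le \lambda\iint\phi\varphi+\lambda\iint |A\nabla\phi|\,|\nabla\varphi|.
\]
The gradient term cannot be moved onto $\nabla\phi$ without a derivative of $A$, so a direct pointwise bound $\mathcal H\psi\le1$ is not available. I do not expect to obtain one from this particular $\psi$.

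\textbf{Energy argument instead.} Set $w=v-\psi$, so that $\mathcal Hw=1-\mathcal H\psi$. I test the equation for $v$ with $\eta=\phi$ times a time cutoff and use the standard energy inequality. This should give
\[
\int_{B}v(y,s)\phi(y)\,dy\ \ge\ c_1(s+1)-C\iint|\nabla v|\,|\nabla\phi|.
\]
The gradient term would be controlled by \eqref{eqn:local-energy}, using $\|v\|_\infty\le C$ from Lemma~\ref{lem:local-potential}. This does not close cleanly, so I switch to a Harnack-based route.

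\textbf{Harnack route.} Here $v\ge0$ by comparison with $0$ (Lemma~\ref{lem:std-comparison}), and $v$ is a supersolution of the homogeneous equation.

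\emph{Step 1: a lower bound on an average.} Test the equation with $\phi(y)$, cut off in time on $(-1,s)$. Since $v=0$ initially, this gives
\[
\int v(y,s)\phi\,dy=\int_{-1}^s\Bigl(\int\phi-\int A\nabla v\cdot\nabla\phi\Bigr).
\]
The gradient term is not signed. To handle it, I use the Dirichlet kernel representation
\[
v(0,0)=\iint_{Q(0,1)}p_{Q}(0,0;y,s)\,dy\,ds.
\]

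\emph{Step 2: lower bound on the kernel.} Near the diagonal the Dirichlet kernel has the lower bound
\[
p_Q(0,0;y,s)\ge c\quad\text{for }|y|\le\tfrac14,\ s\in(-\tfrac14,-\tfrac18).
\]
This follows from the Aronson lower bound combined with the standard near-diagonal Dirichlet lower bound, which is obtained from the parabolic Harnack inequality applied to the adjoint kernel together with the reproducing property (mass conservation locally). Integrating over that region gives $v(0,0)\ge c$.

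\textbf{Main obstacle.} The hard step is the kernel lower bound in Step~2. I would prove it by the weak Harnack inequality applied to the nonnegative supersolution $v$ itself:
\[
\inf_{Q(0,1/4)}v\ \ge\ c\,\fint_{\text{earlier cylinder}}v.
\]
The earlier-cylinder average is then bounded from below by Step~1, with $\phi$ supported in $B(0,1/8)$. For short times the gradient term there is small because $v$ vanishes initially: Cauchy--Schwarz and \eqref{eqn:local-energy} give a bound $C(s+1)^{1/2}\|\nabla v\|_{L^2}$, with $\|\nabla v\|_{L^2(\cdot\times(-1,s))}\le C(s+1)^{1/2}$. The careful point is comparing the linear gain $c(s+1)$ with this error, which is also of order $(s+1)$. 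I would try to close this by a refined energy estimate using $v\le C(s+1)$, which comes from comparison with the supersolution $s+1$.
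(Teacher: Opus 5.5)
Your final route is sound and has the same skeleton as the paper's proof. You obtain a lower bound for a space-time integral of $v$ by testing, then apply the weak Harnack inequality to the nonnegative supersolution $v$ along a finite chain ending at $(0,0)$. The step you leave open closes exactly as you suspect. Since $v$ has zero lateral and initial values, you may test with $v$ itself on $B(0,1)\times(-1,s)$. Ellipticity and $0\le v\le\tau+1$ then give
\[
 \int_{-1}^{s}\int_{B(0,1)}|\nabla v|^2\,dy\,d\tau
 \le\int_{-1}^{s}\int_{B(0,1)}v\,dy\,d\tau
 \le C(s+1)^2 .
\]
So the gradient error in Step 1 is at most $C(s+1)^{1/2}\cdot C(s+1)=C(s+1)^{3/2}$. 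With $\int\phi\,dy=1$ this gives $\int v(y,s)\phi(y)\,dy\ge(s+1)/2$ for $s+1\le\tau_0$, where $\tau_0$ depends only on $n$ and $L$. Integrating over $-1+\tau_0/2<s<-1+\tau_0$ then bounds the early integral from below. In the Harnack chain, use $v^p\ge v$ (valid since $0\le v\le1$) if the weak Harnack exponent is below one. If a cylinder reaches below $s=-1$, extend $v$ by zero; the extension is still a supersolution because the initial values vanish. The chain length depends on $\tau_0$, hence only on $n$ and $L$.

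The paper obtains the integral bound differently, away from the initial face. It tests with a space-time bump supported in $B(0,1/2)\times(-3/4,-1/2)$, so neither a terminal nor an initial term appears. It controls the gradient term by the interior Caccioppoli inequality together with $v\le1$, which yields $c\le CI+CI^{1/2}$ for $I=\iint_E v\,dY$. This forces $I\ge c$ with no short-time smallness. Your version costs a little more, since it needs an energy identity up to the initial face and a chain starting there. In return it gives the quantitative initial growth $\int v(\cdot,s)\phi\,dy\ge(s+1)/2$ as a by-product.

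When writing it up, drop the barrier attempt, the first energy attempt, and the kernel representation in Step~2. As sketched, the near-diagonal lower bound for the Dirichlet kernel hides a survival-mass estimate of the same type as the lemma itself, because the Dirichlet kernel does not conserve mass. It becomes unnecessary once weak Harnack is applied to $v$ directly.
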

\begin{proof}
Scaling reduces the claim to $X=(0,0)$ and $R=1$. Comparison with the function $s+1$ gives $0\le v\le1$. Choose a fixed nonnegative test function $\varphi$, of positive integral, supported in a cylinder whose closure lies in $B(0,1/2)\times(-3/4,-1/2)$. Choose a slightly larger interior cylinder $E$, still separated from the lower and upper faces. Testing with $\varphi$ and applying the Cauchy--Schwarz inequality gives
\[
 c\le C\iint_E v\,dY+C\left(\iint_{\spt\varphi}|\nabla v|^2\,dY\right)^{1/2}.
\]
{Choose the support of $\varphi$ in a smaller cylinder compactly contained in $E$. Applying \eqref{eqn:local-energy} with a cutoff supported in $E$ and forcing measure $dY$ bounds the last integral by $C\iint_E(v^2+v)\,dY\le C\iint_Ev\,dY$. Hence $\iint_Ev\,dY\ge c$. Choose $E$ within $B(0,1/2)\times(-7/8,-3/8)$. Since $0\le v\le1$, the same lower bound holds for its weak Harnack power when that power is below one. Apply the weak Harnack inequality on a finite chain of overlapping interior cylinders whose later time portions successively reach $B(0,1/8)\times(-1/64,0)$. Their radii and time gaps are fixed independently of $A$. Thus $v(0,0)\ge c$.} The value at time zero is taken from below. Rescaling proves the result.
\end{proof}

\begin{lemma}\label{lem:necessity}
Let $\nu\ge0$ and let $u$ be a continuous weak solution of $\mathcal Hu=\nu$. If $[u]_{C^{\alpha,\alpha/2}(D\times(0,T))}<\infty$ for some $0<\alpha\le1$, then
\begin{equation}\label{eqn:necessity}
 \nu(Q(X,r))\le C[u]_{C^{\alpha,\alpha/2}(D\times(0,T))}r^{n+\alpha}
\end{equation}
for every $X\in D\times(0,T)$ and $0<r<\delta(X)/4$.
The constant depends only on {$n$, $L$, and $\alpha$}.
\end{lemma}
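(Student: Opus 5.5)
Test the equation $\mathcal Hu=\nu$ against a cutoff adapted to $Q(X,r)$. Because the forcing is only seen in the pairing $\int\varphi\,d\nu$, we can subtract a constant from $u$ so that only its oscillation enters; the Hölder seminorm then controls that oscillation. Concretely, fix $X=(x,t)$ and $0<r<\delta(X)/4$. By the definition of $\delta$, the cylinder $Q(X,2r)$ (time interval $(t-4r^2,t)$, ball $B(x,2r)$) lies in $D\times(0,T)$. Choose $\varphi=\eta(y)\chi(s)$ with $0\le\eta,\chi\le1$, $\eta=1$ on $B(x,r)$, $\spt\eta\subset B(x,2r)$, $|\nabla\eta|\le C/r$. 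Let $\chi=1$ on $[t-r^2,t]$, supported in $(t-4r^2,t+\epsilon)$, with $|\chi'|\le Cr^{-2}$ on the lower ramp. Near the top we want a cutoff that is admissible and captures the mass of the open cylinder $Q(X,r)$. Since $t<T$ possibly with $t+\epsilon>T$ only if $t$ is near $T$, it is cleanest to let $\chi$ decrease on $(t,t+\epsilon)$ and let $\epsilon\to0$. The top ramp contributes a term $\int\eta\,u(\cdot,t)\,dy$ with a favorable structure after subtracting a constant, as below. Then $\nu(Q(X,r))\le\int\varphi\,d\nu$ because $\nu\ge0$.

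Set $k=u(X)$ and $v=u-k$. Since constants solve the homogeneous equation, $\int\varphi\,d\nu=-\iint v\,\partial_s\varphi+\iint A\nabla v\cdot\nabla\varphi$. On $Q(X,2r)$ we have $|v|\le[u]\,(2r)^\alpha\cdot C$ because $d_p\le 2r$ there, writing $[u]$ for the seminorm. The time-derivative term on the lower ramp is bounded by $Cr^{-2}\cdot[u]r^\alpha\cdot r^{n+2}=C[u]r^{n+\alpha}$. The upper ramp contributes, as $\epsilon\to0$, $\int\eta\,v(y,t)\,dy$ (by continuity of $u$), which is bounded by $C[u]r^{\alpha}r^n$. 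The gradient term is bounded by $Cr^{-1}\cdot |Q(X,2r)|^{1/2}\|\nabla v\|_{L^2(Q(X,2r))}$.

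The main obstacle is the gradient term, since the energy inequality \eqref{eqn:local-energy} itself involves $\nu$ on a larger cylinder. Apply \eqref{eqn:local-energy} to $v$ on $Q(X,2r)$ versus $Q(X,4r)$. This needs $4r<\delta(X)$, which holds, though the enlarged cylinder may have to be shifted so that its top matches; apply the inequality with the same top time $t$, which is legitimate since the cutoffs there vanish only on the lower and lateral faces. It gives
\[
\|\nabla v\|_{L^2(Q(X,2r))}^2\le Cr^{-2}[u]^2r^{2\alpha}r^{n+2}+C[u]r^\alpha\,\nu(Q(X,4r)).
\]
Writing $m(\rho)=\nu(Q(X,\rho))$, we obtain
\[
m(r)\le C[u]r^{n+\alpha}+Cr^{n/2}\bigl([u]^2r^{n+2\alpha}+[u]r^\alpha m(4r)\bigr)^{1/2}.
\]
The second term is $\le C[u]r^{n+\alpha}+\bigl(C[u]r^{n+\alpha}m(4r)\bigr)^{1/2}$, and by Young's inequality this is at most $C_\epsilon[u]r^{n+\alpha}+\epsilon\, m(4r)$.

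The doubling mismatch is the hard part, since it would require a covering or iteration argument. I would avoid it by using \eqref{eqn:local-energy} with the cutoff-in-the-test trick directly: test the equation with $\varphi$ times $v$-independent quantities, or better, test with $\varphi^2$ and estimate $\iint A\nabla v\cdot\nabla\varphi^2$ via a Caccioppoli inequality obtained by testing with $v\varphi^2$. That gives $\int v\varphi^2\,d\nu\le$ local terms, and $|v|\le C[u]r^\alpha$ on the support. The resulting energy bound involves only $\nu$ on the support of $\varphi$, that is on $Q(X,2r)$, giving $\|\nabla v\varphi\|^2\le C[u]^2r^{n+2\alpha}+C[u]r^\alpha\int\varphi^2d\nu$.

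Write $I=\int\varphi^2d\nu\ge m(r)$. The same computation then yields $I\le C[u]r^{n+\alpha}+C([u]r^{n+\alpha}I)^{1/2}$, where $I<\infty$ by local finiteness. Absorbing the square-root term gives $I\le C[u]r^{n+\alpha}$, and hence \eqref{eqn:necessity}. The justification of testing with $v\varphi^2$ for continuous measure-data solutions is supplied by the regularization remark following \eqref{eqn:local-energy}.
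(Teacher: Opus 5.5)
Your final argument in the last two paragraphs is correct, but it uses a different mechanism from the paper.

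\textbf{The paper's route.} The paper subtracts $\sup_{Q(X,2r)}u$ rather than $u(X)$, so that $v\le0$ on $Q(X,2r)$. Then the source term $\int v\chi^2\,d\nu$ in the Caccioppoli test is nonpositive and is simply dropped. This gives $\iint_{Q(X,3r/2)}|\nabla u|^2\le C[u]^2r^{n+2\alpha}$ with no reference to $\nu$. In the subsequent test with $\eta$ (integrated up to $\tau<t$, then $\tau\uparrow t$), the terminal term $\int\eta v(\cdot,\tau)\,dy$ is also nonpositive and is discarded.

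\textbf{Your route.} You keep $v=u-u(X)$ and let the measure enter the energy bound as $C[u]r^\alpha I$, with $I=\int\varphi^2\,d\nu$. You then close by absorption. This works because you use the same cutoff $\varphi^2$ in both tests, so the mass on the right of the Caccioppoli inequality is exactly the quantity being estimated. That is what removes the $m(4r)$ mismatch of your first attempt. The absorption also needs $I<\infty$, which holds because $\varphi$ has compact support in $D\times(0,T)$ and $\nu$ is locally finite.

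Extending the time cutoff to $(t,t+\epsilon)$ is legitimate if you take $\epsilon<\min\{T-t,4r^2\}$; the second bound keeps $d_p\le 2r$ on the upper ramp. With this choice:
\begin{itemize}
\item The test functions are compactly supported in $D\times(0,T)$, so no terminal terms arise.
\item In the mass test, the upper ramp contributes at most $C[u]r^{n+\alpha}$, since $\chi^2$ has total variation one there.
\item In the Caccioppoli inequality, the upper-ramp term $\tfrac12\iint v^2\partial_s(\varphi^2)$ is nonpositive and can be dropped.
\end{itemize}
Consequently no limit $\epsilon\to0$ is needed.

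\textbf{Trade-off.} The paper's sign trick gives an energy bound independent of $\nu$ and needs no absorption. Your route needs no sign information on $v$, but it uses $\nu\ge0$ at a different point. The equation controls $\int\varphi^2\,d\nu$, while the energy estimate involves $\int\varphi^2\,d|\nu|$, and these coincide only for nonnegative data. Both arguments rely on the regularization remark after \eqref{eqn:local-energy} to justify testing with $v$ times a squared cutoff.
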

\begin{proof}
On $Q(X,2r)$ set $v=u-\sup_{Q(X,2r)}u$. Thus $v\le0$ and
$|v|\le Cr^\alpha{[u]_{C^{\alpha,\alpha/2}(D\times(0,T))}}$ there.
Choose cutoffs {$\eta$ and $\chi$} with $\eta=1$ on $Q(X,r)$, supported spatially and from below inside $Q(X,3r/2)$, and $\chi=1$ on that cylinder, supported inside $Q(X,2r)$. Their spatial and time derivatives are bounded by $C/r$ and $C/r^2$. Integrate up to a time $\tau<t$. Testing the equation with $v\chi^2$ makes the source term nonpositive. After discarding the nonnegative terminal energy and taking $\tau\uparrow t$, we obtain
\[
 \iint_{Q(X,3r/2)}|\nabla u|^2\,dY
 \le Cr^{-2}\iint_{Q(X,2r)}v^2\,dY
 \le C{[u]_{C^{\alpha,\alpha/2}(D\times(0,T))}}^2r^{n+2\alpha}.
\]
Testing instead with $\eta$ gives a nonpositive terminal term because $v\le0$. Hence, by the Cauchy--Schwarz inequality,
\[
 \nu(Q(X,r))\le Cr^{-2}\iint_{Q(X,3r/2)}|v|\,dY
 +Cr^{n/2}\left(\iint_{Q(X,3r/2)}|\nabla u|^2\,dY\right)^{1/2}.
\]
The two bounds imply \eqref{eqn:necessity}. The regularization described after \eqref{eqn:local-energy} justifies the tests; monotone convergence recovers the open backward cylinder at the terminal time.
\end{proof}

\subsection{Boundary decay and the strong barrier}
{
Strong barriers for weighted quasilinear elliptic equations and their application to boundary-singular Poisson data were constructed in \cite{HaraStrongBarrier}. We use the parabolic construction of \cite{haraParabolic} below.
}
Under {\eqref{eqn:CDC}}, the capacitary boundary estimates of Ziemer \cite{MR563383} give H\"older decay for homogeneous solutions with zero lateral values. The quantitative estimate used in \cite[Lemma~3.9 and Appendix~A]{haraParabolic} is obtained by modifying Ziemer's argument. Using this estimate, \cite[Theorem~4.1]{haraParabolic} extends Ancona's gluing construction of strong barriers \cite{MR856511} to parabolic equations. The zero initial condition permits extension by zero to earlier times, so the boundary estimate also applies near the initial face. Choose $0<\abd<1$, depending only on {$n$, $L$, and $\gamma$}, small enough for both the boundary estimate and the strong-barrier construction.

\begin{proposition}\label{thm:std-barrier}
Assume \eqref{eqn:A} and \eqref{eqn:CDC}. For each $0<\ass<\abd$, there is a nonnegative function
$S\in V_{\loc}(D\times(0,\infty))\cap C(\overline{D\times(0,\infty)})$ such that
\begin{equation}\label{eqn:strong_barrier}
 \delta^{\ass}\le S\le C\delta^{\ass},\qquad
 \mathcal HS\ge c\delta^{\ass-2}
\end{equation}
in $D\times(0,\infty)$, with zero initial and lateral values.
Here $A$ is extended by the identity for $t>T$, and {$c$ and $C$} depend only on {$n$, $L$, $\gamma$, and $\ass$}.
\end{proposition}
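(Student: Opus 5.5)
The plan is to follow Ancona's gluing scheme in the parabolic setting, as in \cite[Theorem~4.1]{haraParabolic}, and to build $S$ from local pieces adapted to a Whitney-type decomposition of $D\times(0,\infty)$ relative to $\delta$. The key quantitative input is the boundary H\"older decay for homogeneous solutions with zero lateral values, which holds under \eqref{eqn:CDC} with some exponent larger than $\abd$. The zero initial condition lets us extend by zero to negative times, so the same decay also holds near the initial face. With $\ass<\abd$ fixed, the goal is a supersolution that is pinned between $\delta^{\ass}$ and $C\delta^{\ass}$ and whose ``excess'' $\mathcal HS$ is at least $c\delta^{\ass-2}$.

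\textbf{Step 1: a one-scale lemma.} For each dyadic scale $r=2^{-k}$ (and all $k\in\mathbb Z$, since $D$ may be unbounded), take the region where $\delta$ is comparable to $r$ and cover it by admissible cylinders $Q(X_j,r)$ with bounded overlap. Let $v_j$ be the solution of $\mathcal Hv_j=r^{-2}\mathbf 1_{Q(X_j,r)}$ in $D\times(0,\infty)$ with zero initial and lateral data, so $v_j$ is nonnegative by Lemma~\ref{lem:std-comparison}. Three facts are needed:
\begin{itemize}
\item Lemma~\ref{lem:positive-response} gives $v_j\ge c$ near $X_j$.
\item Aronson's bound \eqref{eqn:aronson} gives $v_j\le C$.
\item The boundary decay gives $v_j(Y)\le C(\delta(Y)/r)^{\abd'}$ for $\delta(Y)\le r$, where $\abd'>\abd$. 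Away from the source, $v_j$ also decays in $d_p(Y,X_j)/r$.
\end{itemize}
Summing $v_j$ over $j$ gives a function $w_r$ with $\mathcal Hw_r\ge cr^{-2}$ on $\{\delta\asymp r\}$. It satisfies $w_r\gtrsim1$ there, $w_r\lesssim(\delta/r)^{\abd'}$ for $\delta\le r$, and $w_r\lesssim1$ everywhere.

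\textbf{Step 2: summation.} Set
\[
 S=\sum_k r_k^{\ass}w_{r_k}.
\]
On $\{\delta\asymp r_k\}$ split the sum into two ranges:
\begin{itemize}
\item For scales $r_i\le r_k$ the terms are bounded by $r_i^{\ass}$, a geometric series in $i$ bounded by $Cr_k^{\ass}$.
\item For scales $r_i\ge r_k$ the terms are bounded by $r_i^{\ass}(r_k/r_i)^{\abd'}$. This series converges precisely because $\ass<\abd'$, and its sum is again $\lesssim r_k^{\ass}$.
\end{itemize}
Hence $S\le C\delta^{\ass}$, and the $k$-th term alone gives $S\ge c\delta^{\ass}$. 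The forcing is a nonnegative sum, so $\mathcal HS\ge r_k^{\ass}\cdot cr_k^{-2}\asymp\delta^{\ass-2}$. Rescaling $S$ by a constant gives the normalization $\delta^{\ass}\le S$.

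\textbf{Step 3: regularity, membership, and boundary values.} Continuity up to the boundary and zero boundary values follow from $S\le C\delta^{\ass}$. Membership in $V_{\loc}$ follows from local convergence of the series in energy via \eqref{eqn:local-energy}, since on a compact set only finitely many scales carry forcing. The supersolution inequality then passes to the limit.

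\textbf{Main obstacle.} The delicate step is the upper bound for $w_r$ in Step 1. Summing $v_j$ over all $j$ at one scale requires off-diagonal decay that is summable over the cover, both in space and forward in time; time is unbounded, since the heat potential accumulates. I would first handle time by a gluing argument inductive in time steps of size $r^2$. Each unit of forcing is transported only a bounded amount forward: after time $r^2$ the mass of $v_j$ near a boundary region of size $r$ contracts by a fixed factor, because the CDC gives uniform decay of homogeneous solutions vanishing on the lateral boundary. This geometric contraction makes the time sum converge. The spatial sum is controlled by the Gaussian factor in \eqref{eqn:aronson}. If this direct summation fails, the fallback is Ancona's minimum construction, $S=\min$ of suitably scaled local supersolutions, which needs only local, single-cylinder estimates.
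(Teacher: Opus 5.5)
\textbf{Gap: the uniform bound for the one-scale potentials.} Steps~2 and~3 are fine once you know $\sup w_r\le C$, with $C$ independent of $r$ and of the time horizon. After that, your decay $w_r\le C(\delta/r)^{\kappa}$ for $\delta\le r$ follows from the CDC boundary estimate on source-free cylinders of size comparable to $r$. But this uniform bound is exactly the content of the strong-barrier theorem, and the mechanism you propose for it does not work.

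There is no contraction by a fixed factor per time step of length $r^2$. Already in a half-space, the Dirichlet heat solution with initial values $\bm{1}_{\{0<x_n<r\}}$ has supremum of order $r^2/t$, so it decays only polynomially. Under \eqref{eqn:CDC} you only gain a factor $(\rho/R)^{\kappa}$ relative to the supremum on a larger cylinder, and that supremum need not decay at all. Summing Aronson bounds over the cover does not close either. The Gaussian factor bounds the spatial sum at a fixed time lag only by a constant, so convergence in time would have to come from boundary factors. These are powers of $r^2/t$ with an exponent comparable to the possibly small $\kappa$, and in general they are not summable over $t\in r^2\mathbb N$. Your fallback via minima is not specified enough to assess.

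\textbf{How to repair it.} Replace decay in time by absorption through the maximum principle. Take the source to be $r^{-2}$ times the indicator of a set where $\delta\asymp r$. On a finite strip put $M=\sup w_r$, which is finite by comparison with $t/r^2$. There are three regions to handle.
\begin{enumerate}[label=\textup{(\alph*)}]
\item Where $t\lesssim r^2$, you have $w_r\lesssim1$ trivially.
\item On the cylinder where $\dist(x,\partial D)\gg r$ and $t\gtrsim r^2$, $w_r$ solves the homogeneous equation. Lemma~\ref{lem:std-comparison} bounds it by its values on the other two regions.
\item At $X=(x,t)$ with $\dist(x,\partial D)\lesssim r$, take a nearest $\xi\in\partial D$. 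Split $w_r=w_1+w_2$ on $(D\cap B(\xi,Kr))\times(\max\{t-K^2r^2,0\},t)$. Here $w_1$ is the local potential with zero parabolic data, so $w_1\lesssim K^2$. The remainder $w_2$ is homogeneous, satisfies $0\le w_2\le M$, and vanishes on $\partial D$. The boundary estimate, after extension by zero below time $0$ if needed, gives $w_2(X)\le C'K^{-\kappa}M$.
\end{enumerate}
Hence $M\le C+CK^2+C'K^{-\kappa}M$, and choosing $K$ large gives $M\le C$ independently of the strip. With this step your construction becomes a self-contained proof of the barrier for every $\ass$ below the boundary exponent. You only need the boundary estimate to hold with exponent $\abd>\ass$; no $\abd'>\abd$ is required.

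\textbf{Comparison with the paper.} The paper argues far more briefly. It takes $S_0$ with $\delta^{\abd}\le S_0\le C\delta^{\abd}$ and $\mathcal HS_0\ge c\delta^{\abd-2}$ from \cite[Theorem~4.1]{haraParabolic}, and sets $S=S_0^{\ass/\abd}$. For the increasing concave function $\phi(s)=s^{\ass/\abd}$ one has $\mathcal H\phi(S_0)\ge\phi'(S_0)\mathcal HS_0\ge c\delta^{\ass-\abd}\delta^{\abd-2}=c\delta^{\ass-2}$. Your repaired route essentially reproves the cited theorem directly at each exponent, which makes this power step unnecessary.
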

Indeed, apply the cited strong-barrier theorem and then take the power $\ass/\abd$. The weak chain rule for an increasing concave power preserves the supersolution inequality; it follows first for regularized powers and then by a local limit.

{
Let $p_D$ denote the Dirichlet heat kernel, obtained by increasing bounded interior exhaustions. A consequence of \eqref{eqn:strong_barrier} is
\begin{equation}\label{eqn:weightedkernel}
 \int_0^t\int_D p_D(X,Y)\delta(Y)^{\ass-2}\,dy\,ds
 \le C\delta(X)^{\ass}.
\end{equation}
Indeed, bounded nonnegative truncations of $\delta^{\ass-2}$ with compact interior support have heat potentials bounded by $c^{-1}S$ by the comparison principle. Comparison applies on each finite time strip because $S$ is bounded there by \eqref{eqn:strong_barrier}. The monotone convergence theorem and \eqref{eqn:strong_barrier} give \eqref{eqn:weightedkernel}. The density in \eqref{eqn:weightedkernel} need not have finite total mass.
}

{For the proof in Section~\ref{sec:principal-proof}, fix $0<\ass<\min\{\beta,\abd,1\}$ and put $\agh=\min\{\ain,\ass\}$.}

\section{{The principal theorem, divergence data, and examples}}\label{sec:principal-proof}
\subsection{Smooth data and the ratio estimate}\label{subsec:boundary-decay}
For $0\le f\in C_c^\infty(D\times(0,T))$, the solution $u$ of \eqref{eqn:intro-equation} with $\nu=f\,dX$ is obtained by solving on bounded interior exhaustions and taking their increasing limit. {The whole-space Gaussian estimate gives a uniform bound for these solutions on every finite time interval.} Since the forcing is bounded and supported in a compact time interval, comparison after that interval gives a uniform bound for all later times. Local energy estimates and interior regularity pass the equation to the limit. Near the parabolic boundary, comparison with a sufficiently large multiple of $S$ gives zero continuous boundary values: on the compact support, $f\le C\delta^{\ass-2}$, and outside it $f=0$. {
More explicitly, if $u_m$ is the solution on the $m$th exhaustion, choose $C_f$ so that $f\le C_f\mathcal HS$ on its compact support. Then comparison gives
\[
 0\le u_m\le C_fS\le CC_f\delta^{\ass}.
\]
The same bound passes to the increasing limit and gives its zero continuous boundary values.
} Uniqueness follows from Lemma~\ref{lem:std-comparison} on finite time intervals.

\begin{proposition}\label{prop:boundary-decay}
{
Let $0\le f\in C_c^\infty(D\times(0,T))$, put $\nu=f\,dX$, and let $u$ be the bounded continuous solution of \eqref{eqn:intro-equation}. If $\nu$ satisfies \eqref{eqn:morrey}, then $u$ satisfies \eqref{eqn:main-estimate}.
}
\end{proposition}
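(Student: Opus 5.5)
The plan is to compare $u$ with a superposition of local potentials near the support and the strong barrier $S$ of Proposition~\ref{thm:std-barrier}. Write $R_0=\|\delta\|_{L^\infty(\spt\nu)}$. By scaling ($X\mapsto X/R_0$ parabolically) we may take $R_0=1$, so the goal is $u\le CM\min\{\delta,1\}^{\ass}$. Since $f\ge0$, we have $u\ge0$. The core idea is to replace $f$ by a majorant of the form $CM\delta^{\ass-2}$ in an averaged sense, using the Morrey bound. A pointwise majorant fails, so we decompose $u$ instead.

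\textbf{Step 1: Whitney decomposition and local potentials.} Cover $D\times(0,T)$ by Whitney-type cylinders $Q_j=Q(X_j,r_j)$ with $r_j=\delta(X_j)/64$ and bounded overlap. Take a partition of unity $\{\phi_j\}$ subordinate to the enlarged cylinders and write $f=\sum_j f_j$ with $f_j=\phi_jf$. Let $u_j$ solve \eqref{eqn:intro-equation} with datum $f_j$, so that $u=\sum_ju_j$ by linearity and monotone convergence. The restriction argument of Lemma~\ref{lem:local-potential} shows that each $f_j$, extended by zero, satisfies the global Morrey bound with constant $CM$. The local solution $w_j$ on the cylinder $Q(X_j,4r_j)$ (Dirichlet data zero) is then bounded by $CMr_j^\beta$.

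\textbf{Step 2: Barrier comparison away from the support.} Outside a slightly smaller cylinder, $u_j$ is a caloric function that vanishes on the parabolic boundary. The Aronson bound \eqref{eqn:aronson} gives mass control $\int f_j\le CMr_j^{n+\beta}$. I would rather avoid kernels and instead compare $u_j$, or rather the whole sum, with $S$. Concretely, I will show the pointwise averaged bound
\[
 u(X)\le CM\iint p_D(X,Y)\delta(Y)^{\beta-2}\mathbf 1_{\{\delta\le 2\}}\,dY + (\text{local term}).
\]
This follows by replacing each $f_j$ with the smoothed datum $\tilde f_j:=r_j^{-n-2}\nu(\tilde Q_j)\mathbf 1_{\tilde Q_j}\le CMr_j^{\beta-2}$. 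The difference $f_j-\tilde f_j$ has zero mean and is localized at scale $r_j$. Its potential is controlled by Step 1 near $Q_j$. Far from $Q_j$, it is controlled using the interior Hölder continuity of $p_D(X,\cdot)$ on $\tilde Q_j$ (Lemma~\ref{lem:interior-forced} applied to the adjoint), which yields a gain $(r_j/d)^{\ain}$. I expect this cancellation estimate, and in particular summing the gains over $j$ against the kernel bounds, to be the main obstacle. I would try first to bound the far contribution by $CMr_j^{\beta-2}\cdot$(a potential of $\mathbf 1_{\tilde Q_j}$) times the gain, and then absorb it into the smoothed term.

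\textbf{Step 3: Conclusion via the barrier.} Since $\beta>\ass$ and $\delta\le2$ on the relevant set, we have $\delta^{\beta-2}\le C\delta^{\ass-2}$ there. Hence \eqref{eqn:weightedkernel} bounds the smoothed term by $CM\delta(X)^{\ass}$. The local term is at most $CMr_j^\beta\le CM\delta(X)^{\ass}$ near the support, again using $\delta\le C$. For $\delta(X)>2$ the estimate needs no decay: comparison with $CMS$ truncated at level $\delta=2$, together with the maximum principle in the region beyond the support, gives $u\le CM$. Undoing the scaling yields \eqref{eqn:main-estimate} with the stated dependence on $R_0$. A cruder alternative route, which I would try if Step 2 proves messy, is an iteration: compare $u$ on $\{\delta<\rho\}$ with $\sup_{\{\delta=\rho\}}u\cdot$(boundary decay) plus $CM\rho^\beta$ from Lemma~\ref{lem:interior-forced}, at dyadic $\rho$. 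Summing the geometric series would then use $\ass<\beta$.
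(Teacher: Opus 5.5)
Your Step~2 carries the whole proposition, and the mechanism you propose for it does not work. You need
\[
 u(X)\le CM\iint p_D(X,Y)\,\delta(Y)^{\beta-2}\bm{1}_{\{\delta\le2\}}\,dY+(\text{local term}).
\]
This amounts to dominating $\sup_{\spt\phi_j}p_D(X,\cdot)$ by an average of $p_D(X,\cdot)$ over a cylinder of size $\asymp r_j$.

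Interior H\"older continuity cannot supply this, for two reasons.
\begin{enumerate}[label=(\alph*)]
\item Lemma~\ref{lem:interior-forced} for the adjoint kernel around a Whitney cylinder applies only up to radius $\delta(X_j)/16=4r_j$. The factor it produces is $(r_j/R)^{\ain}$ with $R\lesssim r_j$, so interior estimates give no decaying gain $(r_j/d)^{\ain}$ for $d\gg r_j$.
\item An oscillation bound is relative to a supremum. Converting a supremum of the nonnegative adjoint solution $p_D(X,\cdot)$ into an average is a Harnack inequality, not a H\"older estimate. For the backward equation its time lag runs backward: suprema at later times are controlled by infima at earlier times. On the same cylinder the termwise comparison is false in general. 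If $X$ is spatially far from $\tilde Q_j$ and just later, the kernel decays through a Gaussian layer near $s=t$, and its supremum on $\tilde Q_j$ exceeds its average by an unbounded factor.
\end{enumerate}
Falling back on \eqref{eqn:aronson} does not rescue the sum either. The sum $\sum_jMr_j^{n+\beta}d_p(X,\tilde Q_j)^{-n}$ behaves like $M\iint\delta^{\beta-2}d_p(X,\cdot)^{-n}$. Since $\beta<1$, this is not bounded uniformly as $\spt f$ approaches $\partial D$, so the boundary decay of $p_D$ must enter somewhere.

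The missing ingredient is the adjoint Harnack inequality with a time shift.
\begin{enumerate}[label=(\roman*)]
\item Place the comparison density on a cylinder $Q_j^-$ of radius $\asymp r_j$, lying $\asymp r_j^2$ earlier than $\spt\phi_j$. This is possible since $\sqrt{t_j}\ge\delta(X_j)=64r_j$. Set $\tilde f_j=|Q_j^-|^{-1}\bigl(\int f_j\,dY\bigr)\bm{1}_{Q_j^-}\le CMr_j^{\beta-2}$.
\item For $X$ outside a fixed enlargement of $Q_j^-\cup\spt\phi_j$, the kernel $p_D(X,\cdot)$, extended by zero after time $t$, is a nonnegative adjoint solution on a cylinder containing both. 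Then $\int p_D(X,Y)f_j\,dY\le\sup_{\spt\phi_j}p_D(X,\cdot)\int f_j\,dY\le C\int p_D(X,Y)\tilde f_j\,dY$, and no cancellation is used.
\item The boundedly many remaining $j$ have $r_j\asymp\delta(X)$. They contribute $CMr_j^\beta$ by \eqref{eqn:aronson} and the Morrey bound.
\end{enumerate}
With this replacement, your Steps~1 and~3 do prove the proposition. Step~3 uses the maximum principle on $\{\delta>2\}$ for the bound $u\le CM$.

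Even so, this route needs the Dirichlet kernel, the adjoint Harnack inequality, and \eqref{eqn:weightedkernel}, which is itself derived from $S$ by comparison. The paper uses $S$ directly, in a one-cylinder argument with no kernel summation:
\begin{enumerate}[label=(\roman*)]
\item Positive-part comparison outside $\spt f$ shows that $\|u/S\|_{L^\infty}$ is attained at some $X_0\in\spt f$.
\item The function $\|u/S\|_{L^\infty}S-u$ is nonnegative and vanishes at $X_0$. On $Q(X_0,\delta(X_0)/16)$, $\mathcal H(\|u/S\|_{L^\infty}S-u+w)\ge c\|u/S\|_{L^\infty}\delta(X_0)^{\ass-2}$.
\item Comparing with a multiple of the response to $\mathcal Hv=1$ and evaluating at $X_0$ gives $c\|u/S\|_{L^\infty}\delta(X_0)^{\ass}\le w(X_0)\le CM\delta(X_0)^\beta$, by Lemmas~\ref{lem:local-potential} and~\ref{lem:positive-response}.
\end{enumerate}
Your dyadic-layer alternative is too vague to assess. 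It also faces the same obstacle, since the forcing inside $\{\delta<\rho\}$ is not controlled by interior estimates.
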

\begin{proof}
{
Assume $f\ne0$. The ratio $u/S$ is continuous on $\spt f$ and has a finite maximum there. For every $k>\max_{\spt f}(u/S)$, the continuous function $(u-kS)_+$ vanishes in a neighborhood of $\spt f$. Outside that support, $u-kS$ is a subsolution, so its positive part is a subsolution on the whole cylinder. This positive part is bounded by $\|u\|_{L^\infty(D\times(0,T))}$, even if $S$ is unbounded. It has zero parabolic boundary values. Lemma~\ref{lem:std-comparison} gives $u\le kS$. Decreasing $k$ to the support maximum and using continuity of $u/S$ gives
\[
 \|u/S\|_{L^\infty(D\times(0,T))}
 =\max_{\spt f}\frac{u}{S}<\infty.
\]
Choose $X_0\in\spt f$ where this maximum is attained and put $r=\delta(X_0)/16$. Let $w$ and $v$ solve $\mathcal Hw=f$ and $\mathcal Hv=1$ in $Q(X_0,r)$, respectively, with $w=v=0$ on $\partial_pQ(X_0,r)$. The function $\|u/S\|_{L^\infty(D\times(0,T))}S-u$ is nonnegative and vanishes at $X_0$. Since $\delta$ is comparable to $\delta(X_0)$ on this cylinder,
\[
 \mathcal H\bigl(\|u/S\|_{L^\infty(D\times(0,T))}S-u+w\bigr)
 =\|u/S\|_{L^\infty(D\times(0,T))}\mathcal HS
 \ge c\|u/S\|_{L^\infty(D\times(0,T))}\delta(X_0)^{\ass-2}.
\]
Comparison with $c\|u/S\|_{L^\infty(D\times(0,T))}\delta(X_0)^{\ass-2}v$ and evaluation at the upper center give
\[
 w(X_0)\ge c\|u/S\|_{L^\infty(D\times(0,T))}\delta(X_0)^{\ass-2}v(X_0).
\]
By Lemmas~\ref{lem:local-potential} and \ref{lem:positive-response},
\[
 \|u/S\|_{L^\infty(D\times(0,T))}
 \le CM\delta(X_0)^{\beta-\ass}
 \le CM\|\delta\|_{L^\infty(\spt f)}^{\beta-\ass}.
\]
Combining this with \eqref{eqn:strong_barrier} proves the bound proportional to $\delta(X)^{\ass}$. On $\spt f$ it also gives
$u\le CM\|\delta\|_{L^\infty(\spt f)}^\beta$.
To extend this constant bound to the whole cylinder, apply the same positive-part argument to $(u-k)_+$ for $k$ larger than the maximum on the support. Taking the smaller of the two global bounds proves the proposition.
}
\end{proof}

\begin{proposition}\label{prop:two-point}
Under the hypotheses of Proposition~\ref{prop:boundary-decay}, the solution satisfies \eqref{eqn:hoelder-estimate}.
\end{proposition}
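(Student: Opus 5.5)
We combine the boundary decay of Proposition~\ref{prop:boundary-decay} with the interior oscillation estimate of Lemma~\ref{lem:interior-forced}, splitting by the relative size of $d_p(X,Y)$ and the distances to the parabolic boundary. Put $\Delta=\|\delta\|_{L^\infty(\spt f)}$ and $\rho=d_p(X,Y)$. By Proposition~\ref{prop:boundary-decay},
\[
 |u(Z)|\le CM\Delta^{\beta-\ass}\min\{\delta(Z),\Delta\}^{\ass}
\]
for every $Z$, and in particular $\|u\|_\infty\le CM\Delta^\beta$. By continuity it suffices to take $X,Y\in D\times(0,T)$. We may assume $\delta(X)\ge\delta(Y)$; if the later-time point has the smaller $\delta$, we handle the time ordering by comparing both points with the upper center of a cylinder, as below.

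\emph{Case 1: $\rho\ge\delta(X)/32$.} Since $\delta$ is $1$-Lipschitz for $d_p$ (the map $t\mapsto\sqrt t$ is $1/2$-Hölder with constant $1$, so $|\sqrt t-\sqrt s|\le\sqrt{|t-s|}$), we also have $\delta(Y)\le\delta(X)\le 32\rho$. Then
\[
 |u(X)-u(Y)|\le CM\Delta^{\beta-\ass}\min\{\rho,\Delta\}^{\ass}.
\]
Because $\agh\le\ass$, we have $\min\{\rho,\Delta\}^{\ass}\le \Delta^{\ass-\agh}\rho^{\agh}$, which gives \eqref{eqn:hoelder-estimate}.

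\emph{Case 2: $\rho<\delta(X)/32$.} Set $R=\min\{\delta(X),\Delta\}/16$ if the forcing matters at that scale. More simply, take $R=\delta(X)/16$ and choose the center $X_0$ to be whichever of $X,Y$ has the later time, so that both points lie in $\cl{Q(X_0,2\rho)}$. Since $\delta(X_0)\ge\delta(X)-\rho$, we may work with $R\le\delta(X_0)/16$ after adjusting constants. Lemma~\ref{lem:interior-forced} gives
\[
 |u(X)-u(Y)|\le C(\rho/R)^{\ain}\bigl(\osc_{Q(X_0,R)}u+MR^\beta\bigr).
\]
The decay bound gives $\osc_{Q(X_0,R)}u\le CM\Delta^{\beta-\ass}\min\{\delta(X_0),\Delta\}^{\ass}$. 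If $\delta(X_0)\le\Delta$, then $R\asymp\delta(X_0)$, and
\[
 (\rho/R)^{\ain}R^{\ass}\Delta^{\beta-\ass}\le \rho^{\agh}R^{\ass-\agh}\Delta^{\beta-\ass}\le\rho^{\agh}\Delta^{\beta-\agh},
\]
using $\rho\le R$, $\agh\le\ain$, and $R\le\Delta$. The term $MR^\beta(\rho/R)^{\ain}$ is handled the same way, using $\beta>\ass\ge\agh$.

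If instead $\delta(X_0)>\Delta$, we replace $R$ by $\Delta/16$ when $\rho\le\Delta/32$. This is still admissible since $\Delta/16\le\delta(X_0)/16$. Now the oscillation is at most $CM\Delta^\beta$, and the bound becomes $(\rho/\Delta)^{\ain}M\Delta^\beta\le M\rho^{\agh}\Delta^{\beta-\agh}$. When $\rho>\Delta/32$, the trivial bound $2\|u\|_\infty\le CM\Delta^\beta\le CM\Delta^{\beta-\agh}\rho^{\agh}$ suffices.

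The main obstacle is the bookkeeping in Case 2: the cylinder must be backward from the later point, while the oscillation bound must use $\min\{\delta,\Delta\}$ rather than $\delta$ alone. This is why the radius is capped at $\Delta$. Otherwise the factor $\delta(X_0)^{\ass}$ would be unbounded when $T=\infty$ and $D$ has unbounded distance to its boundary.
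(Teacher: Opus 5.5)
Your proposal is correct and follows essentially the same route as the paper: a near-boundary case handled by the decay bound of Proposition~\ref{prop:boundary-decay}, and an interior case handled by Lemma~\ref{lem:interior-forced}, centered at the later point with outer radius capped by $\|\delta\|_{L^\infty(\spt f)}$ and with the outer oscillation again controlled by the decay bound. Two small differences: the phrase ``after adjusting constants'' in Case~2 needs a concrete fix (for instance, lower the Case~1 threshold to $\delta(X)/64$ and take $R=\delta(X_0)/16$, which gives $2\rho\le R\le\delta(X_0)/16$), and your inequality $(\rho/R)^{\ain}\le(\rho/R)^{\agh}$ neatly replaces the paper's split into the cases $\ain\le\ass$ and $\ass<\ain$.
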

\begin{proof}
It suffices to consider two distinct interior points $X=(x,t)$ and $Y=(y,s)$, ordered so that $t\ge s$. Put $r=d_p(X,Y)$ and choose
\begin{equation}\label{eqn:two-point-radius}
 R=\min\left\{\frac{\delta(X)}{16},
                  \|\delta\|_{L^\infty(\spt f)}\right\}.
\end{equation}
If $r\ge\|\delta\|_{L^\infty(\spt f)}$, the supremum estimate proves the assertion. We may therefore assume $r<\|\delta\|_{L^\infty(\spt f)}$.

First suppose $2r\ge R$. If $R=\|\delta\|_{L^\infty(\spt f)}$, use the supremum estimate and $r\ge R/2$. Otherwise $R=\delta(X)/16$, so
\[
 \delta(X)\le32r,\qquad\delta(Y)\le\delta(X)+r\le33r.
\]
The boundary estimate gives
\[
 |u(X)-u(Y)|
 \le CM\|\delta\|_{L^\infty(\spt f)}^{\beta-\ass}r^{\ass}
 \le CM\|\delta\|_{L^\infty(\spt f)}^{\beta-\agh}r^{\agh}.
\]
The last inequality uses $\agh\le\ass$ and $r<\|\delta\|_{L^\infty(\spt f)}$.

Now suppose $2r<R$. Since $|x-y|\le r$ and $t-r^2\le s\le t$,
\[
 X,Y\in\overline{Q(X,2r)},\qquad
 Q(X,2r)\subset Q(X,R).
\]
Apply Lemma~\ref{lem:interior-forced} with center $X$, outer radius $R$, and inner radius $2r$. The estimate extends to the upper face by continuity. We obtain
\[
 |u(X)-u(Y)|\le C(r/R)^{\ain}
 \left(\osc_{Q(X,R)}u+MR^\beta\right).
\]
If $R=\delta(X)/16$, then $\delta(Z)\le\delta(X)+R=17R$ for $Z\in Q(X,R)$, and boundary decay bounds this oscillation by
\[
 CM\|\delta\|_{L^\infty(\spt f)}^{\beta-\ass}R^{\ass}.
\]
If $R=\|\delta\|_{L^\infty(\spt f)}$, the supremum estimate gives the same bound. The term $MR^\beta$ also obeys this bound because $R\le\|\delta\|_{L^\infty(\spt f)}$ and $\beta>\ass$. Consequently
\[
 |u(X)-u(Y)|\le
 CM\|\delta\|_{L^\infty(\spt f)}^{\beta-\ass}
 r^{\ain}R^{\ass-\ain}.
\]
If $\ain\le\ass$, use $R\le\|\delta\|_{L^\infty(\spt f)}$; if $\ass<\ain$, use $r<R$. Since $\agh=\min\{\ain,\ass\}$, both cases give the required estimate. Finally pass to the closure by continuity.
\end{proof}

\subsection{Approximation and passage to measures}
\begin{proposition}\label{prop:approximation}
Let $\nu\ge0$ satisfy \eqref{eqn:morrey} and \eqref{eqn:support-depth}. There are $0\le f_j\in C_c^\infty(D\times(0,T))$ such that $f_j\,dX\to\nu$ vaguely,
\begin{equation}\label{eqn:approx-growth}
 \iint_{Q(X,r)}f_j\,dY\le CMr^{n+\beta}
\end{equation}
for all admissible {$X$ and $r$}, and
\begin{equation}\label{eqn:approx-support}
 \|\delta\|_{L^\infty(\spt f_j)}
 \le2\|\delta\|_{L^\infty(\spt\nu)}.
\end{equation}
The constant in \eqref{eqn:approx-growth} depends only on {$n$ and $\beta$}.
\end{proposition}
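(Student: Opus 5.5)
We will build $f_j$ by a cutoff followed by a mollification at a scale adapted to the interior distance. Fix an exhaustion by compact sets $K_j=\{X\in D\times(0,T):\delta(X)\ge 2^{-j},\ |X|\le j\}$, and let $\nu_j=\nu\llcorner K_j$, a finite measure with compact support in $D\times(0,T)$. Let $\rho$ be a standard nonnegative mollifier on $\R^{n+1}$ supported in the parabolic unit cylinder $\{|y|<1,\ |s|<1\}$ with unit integral, and put $\rho_\epsilon(y,s)=\epsilon^{-n-2}\rho(y/\epsilon,s/\epsilon^2)$. With $\epsilon_j\le 2^{-j-4}$, define $f_j=\rho_{\epsilon_j}*\nu_j$. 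Then $f_j\ge0$ is smooth, and its support lies in the $\epsilon_j$-parabolic neighborhood of $K_j$, which is still compact in $D\times(0,T)$. Vague convergence is routine. For $\varphi\in C_c$, we have $\int\varphi f_j\,dX=\int(\check\rho_{\epsilon_j}*\varphi)\,d\nu_j$. Here $\check\rho_{\epsilon_j}*\varphi\to\varphi$ uniformly with supports in a fixed compact set, and $\nu_j\to\nu$ on that compact set for large $j$.

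For the support bound \eqref{eqn:approx-support}, note that $\delta$ is $1$-Lipschitz with respect to $d_p$ up to the square-root correction in time. Indeed, $|\sqrt t-\sqrt s|\le\sqrt{|t-s|}$, so $|\delta(X)-\delta(Y)|\le d_p(X,Y)$. Any point of $\spt f_j$ lies within $d_p$-distance $\epsilon_j$ of a point of $\spt\nu\cap K_j$, where $\delta\ge2^{-j}\ge16\epsilon_j$. Hence $\delta\le\|\delta\|_{L^\infty(\spt\nu)}+\epsilon_j\le2\|\delta\|_{L^\infty(\spt\nu)}$, using $\epsilon_j\le\delta$ there.

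The main step is the growth bound \eqref{eqn:approx-growth} for admissible $X$ and $r<\delta(X)/4$. By Fubini,
\[
 \iint_{Q(X,r)}f_j\,dY=\int\Bigl(\iint_{Q(X,r)}\rho_{\epsilon_j}(Y-Z)\,dY\Bigr)d\nu_j(Z).
\]
Split into cases. If $r\ge\epsilon_j$, the inner integral is at most $1$ and vanishes unless $Z\in\widetilde Q$, a backward-forward cylinder of radius $r+\epsilon_j\le2r$ about $X$. Cover $\widetilde Q$ by a bounded number (depending only on $n$) of cylinders $Q(Z_i,r/2)$, or of radius comparable to $r$, centered at points with $\delta(Z_i)\ge\delta(X)-3r$. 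To keep the centers admissible, choose the forward parts of the cover so that each time top exceeds $t$ by at most $\epsilon_j^2\le r^2$. This is the delicate point: we must check $r'<\delta(Z_i)/4$. If that fails because $r$ is close to $\delta(X)/4$, use instead that $Z\in K_j$ forces $\delta(Z)\ge16\epsilon_j$, and apply \eqref{eqn:morrey} at radius about $\delta(Z)/8$ around covering centers near $Z$. Comparable scales only cost a constant.

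If $r<\epsilon_j$, bound the inner integral by $\|\rho\|_\infty\epsilon_j^{-n-2}|Q(X,r)|\le C r^{n+2}\epsilon_j^{-n-2}$. The relevant $Z$ lie in a cylinder of radius $2\epsilon_j$ about $X$, whose $\nu$-mass is at most $CM\epsilon_j^{n+\beta}$. That mass bound comes from the covering argument above, valid since any such $Z\in K_j$ has $\delta(Z)\ge16\epsilon_j$. This gives $CMr^{n+2}\epsilon_j^{\beta-2}\le CMr^{n+\beta}$ because $r<\epsilon_j$ and $\beta\le2$.

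I expect the main obstacle to be the bookkeeping in the covering step near the critical radius $r\approx\delta(X)/4$ and near the time-forward shift of the mollifier. The plan is to handle both uniformly using the margin $\delta\ge16\epsilon_j$ on $K_j$. A cleaner alternative I would try first is a one-sided mollifier supported in $\{s<0\}$ shifted backward, so that $f_j$ only sees earlier times of $\nu$. This makes the cylinder containment $\{Z:\ \rho_{\epsilon}(\cdot-Z)\text{ meets }Q(X,r)\}\subset Q(X',2r)$ with $X'$ a slight forward shift of $X$ automatic. Each constant depends only on $n$ and $\beta$.
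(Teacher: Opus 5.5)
Your route is the paper's: restrict $\nu$ to a compact exhaustion, mollify parabolically at a scale below the interior margin, and use that $\delta$ is $1$-Lipschitz in $d_p$ for \eqref{eqn:approx-support}. The growth estimate is split the same way: a finite covering by admissible cylinders of radius comparable to $r$ when $r\ge\epsilon_j$, and a bound $CM\epsilon_j^{\beta-2}r^{n+2}$ when $r<\epsilon_j$.

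\textbf{The gap.} There is one concrete error. When $T<\infty$, your set $K_j=\{\delta\ge2^{-j},\ |X|\le j\}$ is not separated from the terminal face. It contains times arbitrarily close to $T$, so it is not compact in $D\times(0,T)$. Because your mollifier is two-sided in time, $\spt f_j$ then reaches up to time $T+\epsilon_j^2$. Hence $f_j\notin C_c^\infty(D\times(0,T))$, contrary to what you assert. Compact support is exactly what the next step uses: the ratio argument takes the maximum of $u/S$ over $\spt f$.

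\textbf{The repair.} Add $T-t\ge4^{-j}$ to the definition of $K_j$ when $T<\infty$. This is the role of the term $\sqrt{T-s}$ in the paper's $d_j$. With $\epsilon_j^2\le4^{-j-4}$, the mollified support then stays inside $D\times(0,T)$. Even so, when $t$ is close to $T$, some covering cylinders in the case $r\ge\epsilon_j$ may have upper time at or beyond $T$. Condition \eqref{eqn:morrey} is available only for centers in $D\times(0,T)$. So intersect those cylinders with $(0,T)$ and let the upper time increase to $T$, using monotone convergence (as the paper does via Fatou's lemma).

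\textbf{Smaller points.} The step you call delicate is not delicate. A covering center $Z_i=(z_i,s_i)$ with $|z_i-x|\le2r$ and $t-2r^2\le s_i\le t+r^2$ has $d_p(X,Z_i)\le2r$. Hence $\delta(Z_i)\ge\delta(X)-2r>2r$, so radius $r/2$ is admissible for every $r<\delta(X)/4$. Your weaker bound $\delta(Z_i)\ge\delta(X)-3r$ still permits radii at most $r/4$, which also works.

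However, drop the fallback through the margin $\delta(Z)\ge16\epsilon_j$. For $r\gg\epsilon_j$, that margin alone only licenses cylinders of radius of order $\epsilon_j$. Covering a region of size $r$ needs about $(r/\epsilon_j)^{n+2}$ of them, giving total mass of order $Mr^{n+2}\epsilon_j^{\beta-2}$, not $Mr^{n+\beta}$. It is the admissibility of $X$ that supplies the scale $r$. The $K_j$ margin is needed only in the case $r<\epsilon_j$, where you use it correctly.

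Finally, a kernel supported in $\{s<0\}$ makes $f_j(Y)$ depend on $\nu$ at times later than that of $Y$, not earlier ones. That is why your containment needs a forward shift. With these changes the argument goes through.
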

\begin{proof}
{We control both the support scale and the growth constant of the approximations. Assume $\nu\ne0$. Choose an increasing sequence of compact sets $K_j$ exhausting $D\times(0,T)$ and restrict $\nu$ to $K_j$.} Put
\[
 d_j=\inf_{(y,s)\in K_j}
 \min\{\dist(y,\partial D),\sqrt s,\sqrt{T-s}\},
\]
where the last term is omitted if $T=\infty$. Choose $\epsilon_j>0$ so that
\begin{equation}\label{eqn:approx-epsilon}
 \epsilon_j\le\min\left\{j^{-1},\frac{d_j}{8},
                   \|\delta\|_{L^\infty(\spt\nu)}\right\}.
\end{equation}
Choose $0\le\eta\in C_c^\infty(B(0,1)\times(-1,1))$ with ${
\int_{\R^{n+1}}\eta\,dX
}=1$, and define
\[
 \eta_{\epsilon_j}(x,t)
 =\epsilon_j^{-n-2}\eta(x/\epsilon_j,t/\epsilon_j^2),
 \qquad f_j=\eta_{\epsilon_j}*({\bm{1}}_{K_j}\nu),
\]
extending the restricted measure by zero outside its domain. The choice $d_j\ge8\epsilon_j$ keeps the convolution support inside $D\times(0,T)$. Since $\delta$ is $1$-Lipschitz in $d_p$,
\[
 \sup_{\spt f_j}\delta
 \le\sup_{\spt\nu}\delta+\epsilon_j
 \le2\sup_{\spt\nu}\delta.
\]
This proves \eqref{eqn:approx-support}. Approximate-identity convergence on each compact subset proves vague convergence.

Fix an admissible $Q(X,r)$, where $X=(x,t)$. First suppose $r\ge\epsilon_j$. For a translation $(a,b)$ in the support of $\eta_{\epsilon_j}$, we have $|a|\le\epsilon_j\le r$ and $|b|\le\epsilon_j^2\le r^2$. The translated cylinder has spatial ball $B(x-a,r)$ and time interval $(t-b-r^2,t-b)$. Intersect the latter with $(0,T)$; if the intersection is empty there is nothing to estimate.
Cover $B(x-a,r)$ by at most $5^n$ balls of radius $r/2$ whose centers belong to that ball. Such a covering is obtained from a maximal $r/2$-separated set; its disjoint $r/4$-balls lie in $B(x-a,5r/4)$. Partition the intersected time interval into five equal intervals, each of length at most $r^2/5<r^2/4$. Use the upper endpoint of each interval as the upper time of a backward cylinder of radius $r/2$. The strict inequality ensures that the cylinders overlap at the subdivision times, so no intermediate time slice is omitted.

For each resulting center $Y=(y,s)$,
\[
 |x-y|\le2r,\qquad s\ge t-2r^2,
\]
and therefore
\begin{equation}\label{eqn:translated-center-distance}
 \dist(y,\partial D)>2r,\qquad
 \sqrt s\ge\sqrt t-\sqrt2\,r>2r.
\end{equation}
Since $\delta(X)>4r$, we have $\delta(Y)>2r$ and
\[
 \frac r2<\frac{\delta(Y)}4.
\]
When an upper endpoint equals $T$, the bound is obtained by approaching it from below and applying {Fatou's lemma}; the measure is defined only for times less than $T$. The growth assumption on these at most $5^{n+1}$ cylinders bounds the mass of every translated cylinder by $CMr^{n+\beta}$. Averaging with respect to the convolution kernel proves \eqref{eqn:approx-growth} for $r\ge\epsilon_j$.

For $r<\epsilon_j$, estimate the density first. If $f_j(Z)\ne0$, there is a point of $K_j\cap\spt\nu$ at parabolic distance at most $\epsilon_j$ from $Z=(z,\tau)$. Consequently
\[
 \delta(Z)\ge d_j-\epsilon_j\ge7\epsilon_j>4\epsilon_j.
\]
If $T<\infty$, also $\tau+\epsilon_j^2<T$, since the original support time is at most $T-d_j^2$ and the two time displacements total at most $2\epsilon_j^2<d_j^2$. The box of source points entering the convolution at $Z$ is the union of
\[
 Q(Z,\epsilon_j),\qquad
 Q((z,\tau+\epsilon_j^2),\epsilon_j),
\]
apart from the intervening time slice. Increase the first upper time by $h>0$ and let $h\downarrow0$. Every point of the open source box, including that time slice, then belongs to the union for all sufficiently small $h$. Both radii remain admissible, because increasing the time does not decrease $\delta$ at the fixed spatial point. The Fatou lemma therefore bounds the source mass in the box by $2M\epsilon_j^{n+\beta}$, and
\[
 0\le f_j\le C M\epsilon_j^{\beta-2}.
\]
It follows that
\[
 \iint_{Q(X,r)}f_j\,dY
 \le CM\epsilon_j^{\beta-2}r^{n+2}
 \le CMr^{n+\beta},
\]
since $r<\epsilon_j$ and $\beta<2$. This completes the proof.
\end{proof}

\begin{proof}[Proof of Theorem~\ref{thm:main}]
First let $\nu\ge0$. Take $f_j$ from Proposition~\ref{prop:approximation} and let $u_j$ solve \eqref{eqn:intro-equation} with $\nu=f_j\,dX$. Propositions~\ref{prop:boundary-decay} and \ref{prop:two-point} give uniform boundedness, a uniform global H\"older seminorm, and uniform decay to zero on the parabolic boundary. A diagonal application of the Arzel\`a--Ascoli theorem yields local uniform convergence on the closure to a bounded continuous function $u$. The two estimates pass to the limit, with a changed structural constant.

{
{Fix a cylinder $Q\Subset D\times(0,T)$ and choose a larger cylinder} $Q'\Subset D\times(0,T)$ containing its closure. The local energy estimate, the uniform supremum bound, and \eqref{eqn:approx-growth} give
\[
 \sup_j\left(\|u_j\|_{L^\infty(Q')}
 +\|\nabla u_j\|_{L^2(Q)}\right)<\infty.
\]
Here the masses on $Q'$ are uniformly bounded by a finite admissible covering. After diagonal subsequence extraction,
\[
 u_j\longrightarrow u\quad\text{locally uniformly},\qquad
 \nabla u_j\rightharpoonup\nabla u\quad\text{locally in }L^2.
\]
The spatial weak limit is identified by integration against compactly supported smooth functions. For every $\varphi\in C_c^\infty(D\times(0,T))$, these convergences and vague convergence give
\[
 -\iint_{D\times(0,T)}u\partial_t\varphi\,dX
 +\iint_{D\times(0,T)}A\nabla u\cdot\nabla\varphi\,dX
 =\lim_j\iint_{D\times(0,T)}f_j\varphi\,dX
 =\int_{D\times(0,T)}\varphi\,d\nu.
\]
Thus $\mathcal Hu=\nu$ and $u\in V_{\loc}$. The uniform boundary decay passes to $u$ and gives zero continuous values also at the intersection of the initial and lateral faces.
}

For signed $\nu$, apply the construction to its Jordan components {$\nu_+$ and $\nu_-$}. {Both obey the same growth bound, and $\operatorname{supp}\nu_\pm\subset\operatorname{supp}\nu$, so the supremum of $\delta$ on either support is at most $\|\delta\|_{L^\infty(\operatorname{supp}\nu)}$.} Subtract the resulting solutions. Since $\beta-\ass>0$ and $\beta-\agh>0$, their estimates imply the asserted estimates with the support scale of $\nu$. Finally, the difference of any two bounded continuous weak solutions satisfies the homogeneous equation with zero parabolic values. Lemma~\ref{lem:std-comparison}, applied on each finite time interval, proves uniqueness.
\end{proof}

{
\begin{remark}[Supersolutions for measure data]
For a nonnegative datum satisfying \eqref{eqn:morrey} and \eqref{eqn:support-depth}, a supersolution dominating the measure can also be constructed during the approximation. Let $f_j$ be as in Proposition~\ref{prop:approximation} and solve the smooth problems by the exhaustion at the start of this section. Proposition~\ref{prop:boundary-decay} bounds these solutions $u_j$ by $CM\|\delta\|_{L^\infty(\spt\nu)}^{\beta-\ass}S$. Consequently $S+u_j$ are uniformly comparable to $S$ and satisfy
\[
 \mathcal H(S+u_j)\ge f_j+c\delta^{\ass-2}.
\]
The compactness and weak-limit argument in the proof of Theorem~\ref{thm:main} passes this inequality to a supersolution with right-hand side $\nu+c\delta^{\ass-2}$. This construction uses the smooth approximation and ratio estimate directly; it does not assume measure-data solvability as an input.
\end{remark}

}
\begin{proof}[Proof of Corollary~\ref{cor:characterization}]
{Theorem~\ref{thm:main}, under \eqref{eqn:support-depth}, proves sufficiency.} Lemma~\ref{lem:necessity} proves necessity with the H\"older exponent of the given solution.
\end{proof}

{
\subsection{Divergence data}\label{subsec:divergence-data}
{In this subsection, assume \eqref{eqn:A} and \eqref{eqn:CDC}, and fix
$0<\ass<\min\{\abd,1\}$.}
We construct solutions for divergence data {by the comparison principle} and energy approximation.

For a distribution $\sigma$, let $\spt\sigma$ denote its support relative to $D\times(0,T)$: its complement is the largest open set where $\sigma$ vanishes. We use the supremum of $\delta$ on this support, with value zero for the zero distribution. In particular, for $\sigma=\nu+\divergence\bm{F}$ this is the support of the sum as a distribution, not the union of the supports of its representation components.

\begin{theorem}\label{thm:divergence}
Suppose $\bm{F}\in L^2_{\mathrm{loc}}(D\times(0,T);\R^n)$ satisfies
\begin{equation}\label{eqn:data}
 \|\bm{F}\|_{L^2(Q(Y,r))}\le M'r^{n/2+\ass},
 \qquad 0<r<\delta(Y)/4.
\end{equation}
Assume
\begin{equation}\label{eqn:divergence-support-depth}
 \|\delta\|_{L^\infty(\spt(\divergence\bm{F}))}<\infty.
\end{equation}
There is a unique bounded continuous weak solution $v\in V_{\mathrm{loc}}(D\times(0,T))$ of $\mathcal Hv=\divergence\bm{F}$ with zero initial and lateral values. It satisfies
\begin{equation}\label{eqn:divergence-bound}
 |v(X)|\le CM'\min\left\{\delta(X),
       \|\delta\|_{L^\infty(\spt(\divergence\bm{F}))}\right\}^{\ass}
\end{equation}
for every $X\in D\times(0,T)$. The constant depends only on {$n$, $L$, $\gamma$, and $\ass$}. Condition \eqref{eqn:divergence-support-depth} is automatic for finite $T$.
\end{theorem}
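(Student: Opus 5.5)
The proof follows the scheme of Section~\ref{subsec:boundary-decay}: first treat smooth, compactly supported $\bm F$, establish the ratio estimate against the strong barrier $S$, and then pass to general $\bm F$ by energy approximation.

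\emph{Smooth data.} Let $\bm F\in C_c^\infty(D\times(0,T);\R^n)$ and write $g=\divergence\bm F$, which is bounded with compact support. Solve on bounded interior exhaustions. Near the parabolic boundary, compare with $C_gS$, exactly as in the construction at the start of Section~\ref{subsec:boundary-decay}, to obtain the solution $v$ with zero continuous boundary values. Then argue as in Proposition~\ref{prop:boundary-decay}, applied to $v$ and to $-v$.

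Outside $\spt g$, the function $v-kS$ is a subsolution, so $\|v/S\|_\infty=\max_{\spt g}|v|/S$, attained at some $X_0$. Put $r=\delta(X_0)/16$ and let $w$ solve $\mathcal Hw=g$ in $Q(X_0,r)$ with zero parabolic values. Lemma~\ref{lem:local-potential}, with $f=0$ and exponent $\ass$, gives
\[
|w|\le CM'r^{\ass}.
\]
Let $m=\|v/S\|_\infty$ and $\psi=mS-v$. The function $\psi+w$ satisfies $\mathcal H(\psi+w)\ge cm\delta(X_0)^{\ass-2}$ in $Q(X_0,r)$. On the parabolic boundary, $\psi+w\ge w\ge-CM'r^{\ass}$. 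Comparing with $cm\delta(X_0)^{\ass-2}v_1-CM'r^{\ass}$, where $v_1$ is from Lemma~\ref{lem:positive-response}, and evaluating at $X_0$ (where $\psi(X_0)=0$) gives
\[
cm\delta(X_0)^{\ass}\le 2CM'r^{\ass}.
\]
Hence $m\le CM'$, and so $|v|\le CM'\delta^{\ass}$. Note that no factor $\delta(X_0)^{\beta-\ass}$ appears here, because the growth exponent equals $\ass$.

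The cap at $\|\delta\|_{L^\infty(\spt g)}$ follows from the $(v-k)_\pm$ argument: $v$ is caloric off $\spt g$, so $\sup|v|$ is attained on the support, where $\delta\le\|\delta\|_{L^\infty(\spt g)}$.

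\emph{Approximation.} This is the main obstacle. We need $\bm F_j\in C_c^\infty$ satisfying three conditions:
\begin{enumerate*}[label=(\alph*)]
\item \eqref{eqn:data} uniformly, with constant $CM'$;
\item $\bm F_j\to\bm F$ in $L^2_{\mathrm{loc}}$;
\item $\sup_{\spt\divergence\bm F_j}\delta\le C\|\delta\|_{L^\infty(\spt\divergence\bm F)}$.
\end{enumerate*}
Mollification as in Proposition~\ref{prop:approximation} (parabolic scaling, the $5^{n+1}$ covering for $r\ge\epsilon_j$, and Minkowski's inequality for the $L^2$ norm) gives (a) and (b). The cutoff is delicate, because truncating $\bm F$ by $\bm 1_{K_j}$ creates divergence on $\partial K_j$, possibly at large depth.

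To handle this, first set $d_0=\|\delta\|_{L^\infty(\spt\divergence\bm F)}$. On the open set $\{\delta>2d_0\}$ we have $\divergence\bm F=0$. Replace $\bm F$ there by a cutoff $\chi\bm F$, where $\chi=\chi(\delta)$ equals $1$ for $\delta\le 2d_0$, vanishes for $\delta\ge 4d_0$, and satisfies $|\nabla\chi|\le C/d_0$. Then
\[
\divergence(\chi\bm F)=\chi\divergence\bm F+\nabla\chi\cdot\bm F,
\]
and the new term is supported where $\delta\le 4d_0$. Moreover, $|\nabla\chi\cdot\bm F|$ obeys an $L^1$ Morrey bound of the form $(M'/d_0)r^{n+\ass}$, using Cauchy--Schwarz and $r\lesssim d_0$ on its support. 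This term can therefore be treated as an $f$-type datum by Proposition~\ref{prop:boundary-decay}, which gives a contribution bounded by $CM'd_0^{-1}\cdot d_0^{1-\ass}\delta^{\ass}$ when $\beta=1$. The difference $(1-\chi)\bm F$ produces, on $\{\delta>2d_0\}$, the datum $\divergence((1-\chi)\bm F)=-\nabla\chi\cdot\bm F$, which has the same bound. Consequently we may assume $\spt\bm F\subset\{\delta\le4d_0\}$.

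The remaining truncation by $K_j$ near the parabolic boundary is handled the same way with cutoffs at depth $\epsilon$, where the extra divergence sits at shallow depth. Finally, mollify.

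\emph{Limit and uniqueness.} The uniform estimates \eqref{eqn:divergence-bound} for $v_j$ give local uniform boundedness. Lemma~\ref{lem:interior-forced} gives equicontinuity, and \eqref{eqn:local-energy}, modified for divergence data, gives local $H^1$ bounds. The Arzel\`a--Ascoli theorem together with weak $L^2$ convergence of the gradients passes the equation to the limit, and the uniform boundary decay gives zero boundary values. Uniqueness follows from Lemma~\ref{lem:std-comparison} on finite time strips. When $T<\infty$, we have $\delta\le\sqrt T$, so \eqref{eqn:divergence-support-depth} is automatic.
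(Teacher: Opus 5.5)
Your smooth-data ratio estimate and your compactness and limit argument match the paper. The real difference is how you obtain the cap at $d_0=\|\delta\|_{L^\infty(\spt(\divergence\bm F))}$.

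\textbf{How the paper gets the cap.} The paper never asks the approximations to control where their divergence lives.
\begin{enumerate}[label=(\roman*)]
\item It truncates with $\bm 1_{K_m}\bm F$, even though this creates divergence at large depth.
\item It proves only the uncapped bound $|v_m|\le CM'\delta^{\ass}$. This is uniform on each strip $D\times(0,b)$, since $\delta\le\sqrt b$ there.
\item It passes to the limit strip by strip.
\item It then caps the limit $v$ directly. The function $v$ is continuous, is bounded by $CM'd_0^{\ass}$ on the distributional support of $\divergence\bm F$, and solves the homogeneous equation off that support. So $(v-k)_+$ and $(-v-k)_+$ are subsolutions vanishing near the support, and comparison on each strip gives $|v|\le CM'd_0^{\ass}$.
\end{enumerate}
Thus the ``main obstacle'' you single out disappears if the cap is imposed after the limit rather than on the approximations.

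\textbf{What your route buys and costs.} Your alternative splits $\bm F=\chi\bm F+(1-\chi)\bm F$ and treats $-\nabla\chi\cdot\bm F$ as a density. This is sound. It has the small advantage that the approximating solutions are globally bounded, so no strip-by-strip gluing is needed. The price is that the divergence theorem now depends on the measure-data Theorem~\ref{thm:main} and its separate approximation scheme.

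\textbf{Corrections needed.}
\begin{enumerate}[label=(\alph*)]
\item Fix the bookkeeping for the commutator. On admissible cylinders meeting $\{2d_0\le\delta\le4d_0\}$ one has $r<4d_0/3$. Hence
$\int_{Q(X,r)}|\nabla\chi\cdot\bm F|\,dY\le CM'd_0^{-1}r^{n+1+\ass}\le CM'd_0^{\ass-1}r^{n+1}$.
Theorem~\ref{thm:main} with $\beta=1$ then gives a contribution $CM'\min\{\delta,4d_0\}^{\ass}$. Your stated bound $CM'd_0^{-\ass}\delta^{\ass}$ is dimensionally inconsistent.
\item Cite Theorem~\ref{thm:main} here, not Proposition~\ref{prop:boundary-decay}. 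The latter covers only smooth nonnegative densities, and $\nabla\chi\cdot\bm F$ is signed and nonsmooth.
\item The case $d_0=0$ leaves $\chi$ undefined. It needs the separate remark that $v=0$ there, as in the paper.
\item Once $\spt\bm F\subset\{\delta\le4d_0\}$, plain truncation $\bm 1_{K_j}\bm F$ followed by mollification at scale $\epsilon_j\le d_0$ already keeps the divergence in $\{\delta\le5d_0\}$. Your second round of depth-$\epsilon$ cutoffs is therefore unnecessary.
\end{enumerate}
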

\begin{proof}
We first construct a solution, bounded on each finite time strip, and prove
\begin{equation}\label{eqn:divergence-pointwise}
 |v(X)|\le CM'\delta(X)^{\ass}.
\end{equation}
First let $\bm{F}\in C_c^\infty(D\times(0,T);\R^n)$. The smooth scalar source $\divergence\bm{F}$ gives a bounded continuous solution {by the smooth-forcing construction at the beginning of Subsection~\ref{subsec:boundary-decay}.} Apply the comparison argument in Proposition~\ref{prop:boundary-decay} to $v$ and $-v$, using $\spt(\divergence\bm{F})$ for the support maximum. The only change is the estimate for the local response: Lemma~\ref{lem:local-potential}, with $f=0$, gives
\[
 \|w\|_{L^\infty(Q(X_0,r))}\le CM'r^{\ass}
\]
when $\mathcal Hw=\divergence\bm{F}$ in $Q(X_0,r)$ and $w=0$ on its parabolic boundary. Hence the same comparison, with the forcing exponent equal to $\ass$, gives \eqref{eqn:divergence-pointwise}. This also applies when the growth condition is imposed only for $r<\delta(Y)/8$, by reducing the fixed radius in that comparison.

Next suppose that $\bm{F}\in L^2(D\times(0,T);\R^n)$ has compact support. For this $\bm{F}$, extend it by zero and set $\bm{F}_\epsilon=\eta_\epsilon*\bm{F}$, where $\eta_\epsilon$ is nonnegative, has integral one, and is supported in $|z|<\epsilon$, $0<\tau<\epsilon^2$. {Let $d>0$ be smaller than the spatial distance of $\operatorname{supp}\bm{F}$ from $\partial D$ and the square roots of its distances from the finite time endpoints. Choose $0<\epsilon<d/16$. These fields then have a common compact support in $D\times(0,T)$ and converge to $\bm{F}$ in $L^2(D\times(0,T))$.} For $r<\delta(Y)/8$, any translated cylinder meeting $\spt\bm{F}$ has an interior center and satisfies $r<\delta(Y-Z)/4$ when $Z\in\spt\eta_\epsilon$. Indeed, the support has positive distance from the parabolic boundary, and $\delta$ is Lipschitz in the parabolic metric. The one-sided time support also gives $s-\tau<T$. Thus {Minkowski's integral inequality} gives
\[
 \|\bm{F}_\epsilon\|_{L^2(Q(Y,r))}
 \le\int_{\R^{n+1}}\eta_\epsilon(Z)
       \|\bm{F}\|_{L^2(Q(Y-Z,r))}\,dZ
 \le M'r^{n/2+\ass}.
\]
{Let $v$ be the energy solution with forcing $\operatorname{div}\bm{F}$ and zero initial and Dirichlet boundary values, and let $u_\epsilon$ be the corresponding solutions for $\bm{F}_\epsilon$. Testing the equation for $u_\epsilon-v$} with $u_\epsilon-v$ gives, for each $0<b<T$,
\[
 \sup_{0<t<b}\int_D|u_\epsilon-v|^2\,dx
 +\int_0^b\int_D|\nabla u_\epsilon-\nabla v|^2\,dx\,dt
 \le C\int_0^b\int_D|\bm{F}_\epsilon-\bm{F}|^2\,dx\,dt\longrightarrow0.
\]
Therefore \eqref{eqn:divergence-pointwise} passes to $v$ almost everywhere.

For general $\bm{F}$, choose increasing compact sets $K_m\Subset D\times(0,T)$ whose interiors exhaust $D\times(0,T)$, and put $\bm{F}_m=\bm{1}_{K_m}\bm{F}$. Then
\[
 \|\bm{F}_m\|_{L^2(Q(Y,r))}\le M'r^{n/2+\ass},
 \qquad \bm{F}_m\longrightarrow\bm{F}\quad\text{in }L^2_{\mathrm{loc}}(D\times(0,T)).
\]
Let $v_m$ be the energy solutions for $\bm{F}_m$. The compact-support case gives \eqref{eqn:divergence-pointwise} for each $v_m$, with the same constant. For every finite $b<T$, the bound $\delta\le\sqrt b$ makes these solutions uniformly bounded on $D\times(0,b)$, independently of $m$.

Lemma~\ref{lem:interior-forced}, applied with $f=0$ and $\beta=\ass$, gives continuous representatives of $v_m$ and uniform interior H\"older estimates. The constants are independent of $m$.
Thus the Arzel\`a--Ascoli theorem and a diagonal extraction give $v_m\to v$ locally uniformly in $D\times(0,T)$. The boundary bound passes to $v$ and yields \eqref{eqn:divergence-pointwise}.

For each $Q(X_0,2r)\Subset D\times(0,T)$, the local energy inequality gives
\[
 \int_{Q(X_0,r)}|\nabla v_m|^2\,dX
 \le Cr^{-2}\int_{Q(X_0,2r)}|v_m|^2\,dX
      +C\int_{Q(X_0,2r)}|\bm{F}_m|^2\,dX.
\]
Hence a further diagonal extraction gives $\nabla v_m\rightharpoonup\nabla v$ in $L^2$ on compact interior cylinders. For every $\varphi\in C_c^\infty(D\times(0,T))$, pass to the limit in
\[
 -\int_{D\times(0,T)}v_m\partial_t\varphi\,dX
 +\int_{D\times(0,T)}A\nabla v_m\cdot\nabla\varphi\,dX
 =-\int_{D\times(0,T)}\bm{F}_m\cdot\nabla\varphi\,dX.
\]
Therefore $\mathcal Hv=\divergence\bm{F}$ weakly. Local boundedness and the gradient bound give $v\in V_{\mathrm{loc}}$. Estimate~\eqref{eqn:divergence-pointwise} gives continuous zero initial and lateral values. At a finite terminal time, \eqref{eqn:interior-forced}, applied with $f=0$ and $\beta=\ass$ and with centers approaching that time, gives a continuous extension from below. The finite-strip solutions agree on overlaps by Lemma~\ref{lem:std-comparison} and therefore define the solution on $(0,T)$.

If $\divergence\bm{F}=0$, the same comparison principle gives $v=0$. Otherwise \eqref{eqn:divergence-pointwise} bounds $|v|$ on $\spt(\divergence\bm{F})$ by
$CM'\|\delta\|_{L^\infty(\spt(\divergence\bm{F}))}^{\ass}$.
For a level $k$ strictly above this bound, continuity makes $(v-k)_+$ vanish near the distributional support of the forcing. It is a subsolution elsewhere, where the equation is homogeneous. Apply the positive-part comparison from Proposition~\ref{prop:boundary-decay} on each finite strip, to $v$ and $-v$, and decrease $k$ to the support bound. Combining this bound with \eqref{eqn:divergence-pointwise} proves \eqref{eqn:divergence-bound} and gives boundedness on the whole cylinder under \eqref{eqn:divergence-support-depth}. Bounded comparison also proves uniqueness.
\end{proof}

}

\begingroup
\subsection{Examples and counterexamples}\label{sec:example}

A basic class of measures satisfying \eqref{eqn:morrey} is $\nu = f(X) \, dX$, where $|f(X)|\le K\delta(X)^{\beta-2}$ and $0 < \beta < 1$.
{
Such densities can fail to be integrable near the lateral boundary. \eqref{eqn:weightedkernel} also controls their heat potentials: for $\ass<\beta$, bound $\delta^{\beta-2}$ by $\|\delta\|_{L^\infty(D\times(0,T))}^{\beta-\ass}\delta^{\ass-2}$.
}

Let {$n=2$, $D=(-1,1)\times(0,1)$, and $0<T<1/16$}. For
$0\le\beta\le1/2$, define $\nu_\beta$ by requiring that, for
every $\varphi\in C_c(D\times(0,T))$,
\[
 \int_{D\times(0,T)}\varphi\,d\nu_\beta
 =\frac12\int_0^T t^{\beta-1}
   \int_{-\sqrt t}^{\sqrt t}\varphi(x_1,t,t)\,dx_1\,dt.
\]
All these measures are supported on the same moving segments
\[
 \{(x_1,x_2): |x_1|<\sqrt t,\ x_2=t\}.
\]
Their closures meet the initial face only at $(x,t)=(0,0)$, and
all have finite total mass. For every $0<\tau<T$,
\[
 \nu_\beta(D\times(0,\tau))
 =\int_0^\tau t^{\beta-1/2}\,dt
 =\frac{\tau^{\beta+1/2}}{\beta+1/2}.
\]
In particular, $\beta=1/2$ gives a source of constant total power.

\smallskip
\noindent\textit{Positive exponents.}
On the support of $\nu_\beta$, one has $\delta(y,s)=s$.
For $r<\delta(X)/4$, the distance $\delta(Y)$ is comparable to
$\delta(X)$ throughout $Q(X,r)$, and each spatial ball intersects
a horizontal segment in length at most $2r$. Thus
\[
 \nu_\beta(Q(X,r))
 \le C\delta(X)^{\beta-1}r^3
 =Cr^{2+\beta}
   \left(\frac{r}{\delta(X)}\right)^{1-\beta}
 \le Cr^{2+\beta}.
\]
For $\beta>0$, let $u_\beta$ be the globally
H\"older continuous solution supplied by Theorem~\ref{thm:main} for $\nu_\beta$. For every $A$ satisfying \eqref{eqn:A},
\[
 |u_\beta(x_1,t,t)|\le Ct^\agh
\]
for $0<t<T$ and $|x_1|<\sqrt t$. This holds although no uniform bound of order $r^{2+\beta}$ holds for the mass on cylinders meeting the initial and lateral boundary, as the following calculation shows.
Indeed, for $0<r<\sqrt T$, all the source emitted before $r^2$ lies
in $B(0,2r)\cap D$, so
\[
 \frac{\nu_\beta((B(0,2r)\cap D)\times(0,r^2))}
      {r^{2+\beta}}
 =\frac{r^{\beta-1}}{\beta+1/2}\longrightarrow\infty.
\]

The same family also shows that the data exponent $\beta$ cannot
in general be exceeded. For $\beta>0$, put $X_t=((0,t),t)$ and
$r=t/8$. On the time interval of $Q(X_t,r)$, one has $s\asymp t$,
and the intersection of the spatial ball with the source segment
has length comparable to $r$. Hence
\[
 \nu_\beta(Q(X_t,r))
 \asymp t^{\beta-1}r^3\asymp r^{2+\beta}.
\]
For every $\beta<\alpha\le1$, the ratio of this mass to
$r^{2+\alpha}$ tends to infinity as $t\downarrow0$.
Lemma~\ref{lem:necessity} therefore excludes global
$C^{\alpha,\alpha/2}$ regularity for such $\alpha$.

\smallskip
\noindent\textit{The endpoint.}
Set $\beta=0$, keeping the same support. At $X_t=((0,t),t)$,
choose $r=t/8$. The segment moves by only $r^2$ during the time
interval of $Q(X_t,r)$, and hence
\[
 c r^2\le\nu_0(Q(X_t,r))\le C r^2.
\]
No interior growth bound of order $r^{2+\alpha}$ holds for any
$\alpha>0$. Lemma~\ref{lem:necessity} rules out a globally
H\"older continuous solution of any positive exponent.

For the heat operator, continuity at the initial corner itself
fails. To see this directly, {suppose that a continuous solution $u$ with zero initial and Dirichlet boundary values existed.} It is nonnegative {by the comparison principle}. Put
$U_t=(-t,t)\times(0,2t)$ and let $p_{U_t}$ be its Dirichlet heat kernel.
After scaling to $(-1,1)\times(0,2)$, the evaluation and source
points remain in fixed compact subsets of the rectangle, and the
time differences belong to $[1,2]$. Continuity and strict positivity
of the heat kernel on these compact sets therefore give
\[
 p_{U_t}((0,t),t;(y_1,s),s)\ge ct^{-2}
\]
whenever $t-2t^2<s<t-t^2$ and $|y_1|<t/2$, for sufficiently small
$t$. These source points lie in $U_t$ and satisfy $|y_1|<\sqrt s$.
Let $v_t$ solve the equation with this restricted source in $U_t\times(t-2t^2,t)$, with $v_t=0$ on its parabolic boundary. The kernel formula for $v_t$ follows
first for smooth approximations of the restricted source and then
by passage to the limit. The source times in the integral below stay
at least $t^2$ below the evaluation time, so the kernel is continuous
and bounded on this compact integration set. {The comparison principle gives}
$u\ge v_t$, and hence
\[
 u(0,t,t)\ge v_t(0,t,t)
 = \int_{t-2t^2}^{t-t^2}\frac{1}{2s}
       \int_{-t/2}^{t/2}
       p_{U_t}((0,t),t;(y_1,s),s)\,dy_1\,ds
 \ge c>0.
\]
This contradicts continuity and the zero initial value at the corner.

\subsubsection{Signed data: failure of necessity}\label{sec:signed-examples}
The nonnegativity assumption in Lemma~\ref{lem:necessity} cannot
be omitted, even for solutions that are Lipschitz continuous with respect to $d_p$ and belong to the global energy space. The following examples concern the heat operator. We first give spatial oscillations at the lateral boundary and in the interior, then temporal oscillations at the initial face and at an interior time.

\begin{example}
Let $D=(-1,1)^{n-1}\times(0,1)$, $n\ge2$, and write $x=(x',x_n)$ with $x'\in\R^{n-1}$. Choose nonnegative cutoffs $\eta\in C^\infty(\overline D)$ and $\chi\in C_c^\infty((0,T))$. {Assume that $\eta=1$ on the intersection of $\overline D$ with a neighborhood of the spatial origin} and that its support is separated from the other faces of $D$. Let $\chi=1$ near some $0<t_0<T$. Set
\[
 u(x,t)=\eta(x)\chi(t)x_n^2\sin(1/x_n),
\]
and extend $u$ by zero to $x_n=0$. Since
\[
 \frac{d}{dx_n}\bigl(x_n^2\sin(1/x_n)\bigr)
 =2x_n\sin(1/x_n)-\cos(1/x_n),
\]
the solution $u$ is Lipschitz continuous and belongs to $L^2((0,T);H_0^1(D))$. Near $(0,t_0)$, its datum is
\[
 \nu=(\partial_tu-\Delta u)\,dX
 =\left(x_n^{-2}\sin(1/x_n)+2x_n^{-1}\cos(1/x_n)
                         -2\sin(1/x_n)\right)dX.
\]
Put $X_r=((0,5r),t_0)$, where $0\in\R^{n-1}$. For sufficiently small $r>0$, $r<\delta(X_r)/4$ and both cutoffs equal one throughout $Q(X_r,r)$. The substitution $a=1/x_n$ gives
\[
 \int_{9r/2}^{11r/2}x_n^{-2}|\sin(1/x_n)|\,dx_n
 =\int_{2/(11r)}^{2/(9r)}|\sin a|\,da
 \ge c r^{-1}.
\]
For $9r/2<x_n<11r/2$, the spatial cross sections of $B((0,5r),r)$ have $(n-1)$-dimensional volume at least $cr^{n-1}$. Thus the absolute integral of the first term in the density over $Q(X_r,r)$ is at least $cr^n$. The absolute integrals of the remaining terms are bounded by $C(r^{n+1}+r^{n+2})$. Consequently,
\[
 |\nu|(Q(X_r,r))\ge cr^n
\]
for sufficiently small $r$. Hence $\nu$ does not satisfy \eqref{eqn:morrey} for any $\beta>0$.
\end{example}

\begin{remark}
{
Necessity for the total variation can fail even when that variation is finite. The following example has a compactly supported Lipschitz solution and an integrable forcing density.
}
Let $n=2$ and let $D\subset\R^2$ be a bounded domain
containing the origin. Choose $0<r_0<1$ such that
$\cl{B(0,r_0)}\subset D$, and nonnegative cutoffs
$\eta\in C_c^\infty(B(0,r_0))$ and
$\chi\in C_c^\infty((0,T))$, equal to one near the origin
and some $0<t_0<T$, respectively. Set
\[
 u(x,t)=\eta(x)\chi(t)\frac{|x|^2\sin(1/|x|)}{\log^2(e/|x|)},
 \qquad u(0,t)=0,
\]
where the spatial expression is used on $B(0,r_0)$ and $u$ is extended by zero outside the support of $\eta$.
On a neighborhood of the spatial origin and for times near $t_0$, the cutoffs equal one. Writing $r=|x|$ there, we obtain
\[
 \nu=\left(\frac{\sin(1/r)}{r^2\log^2(e/r)}
       +O\!\left(\frac1{r\log^2(e/r)}\right)\right)dX.
\]
The absolute integral of the leading term over $B(0,r)$ is comparable to $1/\log(e/r)$, while the remainder has integral at most $Cr/\log^2(e/r)$. Therefore,
\[
 |\nu|(B(0,r)\times(t_0-r^2,t_0))
 \asymp\frac{r^2}{\log(e/r)}.
\]
Hence \eqref{eqn:morrey} fails for every $\beta>0$. Nevertheless $\nu$ has finite total variation, since the displayed spatial density is integrable near the origin. The solution is Lipschitz and has zero initial and lateral values; no Dirac mass occurs at the origin because the radial flux tends to zero.

\end{remark}

\begin{example}[Temporal oscillation at the initial face]\label{ex:temporal-initial}
The failure of necessity also occurs through temporal oscillation at the initial face, with smooth spatial dependence.
Let $D=B(0,1)\subset\R^n$, $n\ge2$, and $0<T<1/2$. Choose nonnegative cutoffs $\eta\in C_c^\infty(D)$ and $\chi\in C^\infty([0,T])$, equal to one near $x=0$ and $t=0$, respectively, with $\chi=0$ near $T$. Set
\[
 u(x,t)=\eta(x)\chi(t)t\sin(1/t),\qquad u(x,0)=0.
\]
The function $v(t)=t\sin(1/t)$ satisfies $|v(t)|\le t$ and $|v'(t)|\le1+t^{-1}$. For $0<s<t<T$, put $h=t-s$. If $h\le s^2$, integration of the derivative bound gives $|v(t)-v(s)|\le Ch/s\le C\sqrt h$. If $h>s^2$, the amplitude bound gives $|v(t)-v(s)|\le t+s\le C\sqrt h$. The same estimate at $s=0$ follows from the amplitude bound. Thus $u\in C^{1,1/2}(\overline{D\times(0,T)})\cap L^2((0,T);H_0^1(D))$, with zero initial and lateral values.

For $t>0$, the density of $\nu=(\partial_tu-\Delta u)\,dX$ is smooth, so $\nu$ is locally finite. Where both cutoffs equal one,
\[
 \nu=\left(\sin(1/t)-t^{-1}\cos(1/t)\right)dX.
\]
For sufficiently small $r>0$, put $X_r=(0,25r^2)$. Then $\delta(X_r)=5r$ and $r<\delta(X_r)/4$. Both cutoffs equal one on $Q(X_r,r)$, so
\[
 |\nu|(Q(X_r,r))
 =|B(0,r)|\int_{24r^2}^{25r^2}
       |\sin(1/t)-t^{-1}\cos(1/t)|\,dt.
\]
The difference between this time integral and $\int_{24r^2}^{25r^2}t^{-1}|\cos(1/t)|\,dt$ is $O(r^2)$. The substitution $a=1/t$ and the positive mean of $|\cos a|$ over each period give
\[
 \int_{24r^2}^{25r^2}t^{-1}|\cos(1/t)|\,dt
 =\int_{1/(25r^2)}^{1/(24r^2)}\frac{|\cos a|}{a}\,da
 \longrightarrow\frac2\pi\log\frac{25}{24}.
\]
Consequently $|\nu|(Q(X_r,r))\asymp r^n$, and \eqref{eqn:morrey} fails for every $\beta>0$. The total variation is infinite near the initial face, since $\int_0^a|v'(t)|\,dt=\infty$ for every $a>0$.

A finite-total-variation version is obtained by replacing $t\sin(1/t)$ with $t\sin(1/t)/\log^2(e/t)$. The same regularity argument applies, and its time derivative is absolutely integrable at zero. On the same admissible cylinders,
\[
 |\nu|(Q(X_r,r))\asymp\frac{r^n}{\log^2(e/r)}.
\]
Thus \eqref{eqn:morrey} still fails for every $\beta>0$.
\end{example}

\begin{example}[Temporal oscillation at an interior time]\label{ex:temporal-interior}
Temporal oscillation can also give finite total variation while violating every positive Morrey growth exponent at a fixed interior point.
Let $D=B(0,1)\subset\R^n$, $n\ge2$, and $0<t_0<T<1/2$. Choose nonnegative cutoffs $\eta\in C_c^\infty(D)$ and $\chi\in C_c^\infty((0,T))$, equal to one near $0$ and $t_0$, respectively. For $0<|\tau|<1/2$, put
\[
 v(\tau)=\frac{\tau\sin(1/\tau)}{\log^2(e/|\tau|)},\qquad v(0)=0,
 \qquad u(x,t)=\eta(x)\chi(t)v(t-t_0).
\]
For $\tau\ne0$,
\[
 v'(\tau)=-\frac{\cos(1/\tau)}{\tau\log^2(e/|\tau|)}
             +O\!\left(\frac1{\log^2(e/|\tau|)}\right).
\]
Hence $v'$ is absolutely integrable at zero and $v$ is absolutely continuous across zero. Its distributional derivative therefore has no atom there. The density of $\nu=(\partial_tu-\Delta u)\,dX$ is integrable on $D\times(0,T)$, so $\nu$ has finite total variation. The bounds $|v(\tau)|\le C|\tau|$ and $|v'(\tau)|\le C/|\tau|$ give the temporal H\"older exponent $1/2$ by the splitting used in Example~\ref{ex:temporal-initial}; points on opposite sides of zero are handled by the amplitude bound. Thus $u\in C^{1,1/2}(\overline{D\times(0,T)})\cap L^2((0,T);H_0^1(D))$, with compact support in the interior and zero initial and lateral values.

Fix $X_0=(0,t_0)$. For all sufficiently small admissible $r>0$, both cutoffs equal one on $Q(X_0,r)$. Since $|v'|$ is even,
\[
 |\nu|(Q(X_0,r))
 =|B(0,r)|\int_0^{r^2}|v'(\tau)|\,d\tau
 \asymp\frac{r^n}{\log(e/r)}.
\]
Indeed, the leading integral becomes
\[
 \int_0^{r^2}\frac{|\cos(1/\tau)|}{\tau\log^2(e/\tau)}\,d\tau
 =\int_{1/r^2}^\infty\frac{|\cos a|}{a\log^2(ea)}\,da
 \asymp\frac1{\log(e/r)},
\]
since $|\cos a|$ has a positive mean over each period; the remainder has integral at most $Cr^2/\log^2(e/r)$. Therefore \eqref{eqn:morrey} fails for every $\beta>0$, even at this fixed interior center. Here the logarithmic factor makes the time derivative locally integrable across the interior time $t_0$; the preceding example places its singularity at the initial face, outside the open cylinder.
\end{example}

\endgroup

\section{Characterization of the capacity density condition}\label{sec:cdc-necessity}
{For a fixed coefficient matrix, the converse below concerns an estimate uniform in the finite time horizon and the source. Existence of a H\"older solution for each datum alone is a weaker assertion.}

\begin{theorem}\label{thm:cdc-characterization}
{Let $D\subset\R^n$, $n\ge2$, be a domain, and fix a coefficient matrix $A$} satisfying \eqref{eqn:A} on $D\times(0,\infty)$. The following are equivalent:
\begin{enumerate}[label=\textup{(\roman*)}]
\item The domain $D$ satisfies \eqref{eqn:CDC}.
\item There are $0<\ass<\beta\le1$ and $C>0$ such that, for every finite $T>0$ and every {nonnegative locally finite measure} $\nu$ satisfying \eqref{eqn:morrey}, the equation $\mathcal Hu=\nu$ has a bounded continuous weak solution with zero initial and Dirichlet boundary values satisfying
\begin{equation}\label{eqn:cdc-operator-bound}
 0\le u(X)\le {
 CM\|\delta\|_{L^\infty(\spt\nu)}^{\beta-\ass}
 \min\left\{\delta(X),\|\delta\|_{L^\infty(\spt\nu)}\right\}^{\ass}
}
\end{equation}
for every $X\in D\times(0,T)$. The constants {$\beta$, $\ass$, and $C$} are independent of {$T$, $\nu$, and $M$}.
\end{enumerate}
In the forward implication the constants can be chosen in terms of {$n$, $L$, and $\gamma$}. In the reverse implication the {constant $\gamma$ in \eqref{eqn:CDC}} is bounded below in terms of {$n$, $L$, $\beta$, $\ass$, and $C$}.
\end{theorem}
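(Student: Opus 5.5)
The forward implication (i)$\Rightarrow$(ii) is immediate from Theorem~\ref{thm:main}. Fix $\beta=1$ and choose $\ass<\min\{1,\abd\}$ depending only on $n$, $L$, and $\gamma$. For finite $T$, \eqref{eqn:support-depth} is automatic, since $\delta\le\sqrt T$. Nonnegativity of the solution follows from Lemma~\ref{lem:std-comparison}, or directly from the approximation by nonnegative $f_j$. The estimate \eqref{eqn:main-estimate} is exactly \eqref{eqn:cdc-operator-bound}, and its constants are independent of $T$, $\nu$, and $M$. So all the work lies in the reverse implication.

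For (ii)$\Rightarrow$(i), I would argue through the spatial elliptic picture by way of a stationary limit. Fix $\xi\in\partial D$ and $r>0$, and let $E=\overline{B(\xi,r)}\setminus D$. I would use the standard characterization of uniform $2$-fatness (Lewis, Ancona, \cite{HKM2006}): the complement is uniformly fat if and only if there is a uniform Hölder-type boundary decay, for instance for the capacitary potential or the torsion-type function. I plan to use the contrapositive quantitative form. If $\capacity(E,B(\xi,2r))\le\epsilon\capacity(\overline{B(\xi,r)},B(\xi,2r))$, then solutions of $-\divergence(A\nabla w)=g$ with zero exterior values are not small near $\xi$. Concretely, take $\nu=\bm 1_{B(\xi,r)\cap D}\,\bm 1_{(r^2,T)}\,dX$ with time-independent $A$ frozen appropriately. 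Here $A$ is fixed and may depend on time, so this has to be handled by comparison rather than by stationarity.

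The concrete test is as follows. Let $\nu$ have density $\delta(X)^{\beta-2}$ restricted to $(B(\xi,2r)\cap D)\times(r^2,T)$ with $T=4r^2$. This datum satisfies \eqref{eqn:morrey} with $M\le C$, and $\|\delta\|_{L^\infty(\spt\nu)}\le Cr$. Then (ii) gives $u\le Cr^{\beta-\ass}\delta^{\ass}$. In particular, at points $X=(x,3r^2)$ with $|x-\xi|=\rho\ll r$, we get $u\le Cr^{\beta-\ass}\rho^{\ass}$.

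Conversely, I would derive a lower bound when the capacity of $E$ in $B(\xi,2r)$ is small. Since $\nu\ge c r^{\beta-2}\,dX$ on $(B(\xi,2r)\cap D)\times(r^2,T)$, where $\delta\le Cr$, comparison shows that $u$ dominates $cr^{\beta-2}$ times the solution of $\mathcal Hv=1$ there. I would then show $v\ge c\,r^2\bigl(1-\text{capacitary potential of }E\bigr)$ at the points $x$ close to $\xi$, using the weak Harnack inequality together with a capacitary estimate of Ziemer type. The failure of the CDC at many scales $\rho\ll r$ forces points $x$ near $\xi$ with $v(x,3r^2)\ge c r^2$. Hence $u\ge cr^{\beta}$ there, which contradicts $u\le Cr^{\beta-\ass}\rho^{\ass}$ once $\rho/r$ is small in terms of $C$.

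The main obstacle is the lower bound for $v$ at points extremely close to $\xi$ when the capacity of $E$ is small. One has to show that a small-capacity obstacle cannot pin a positive supersolution down to zero near $\xi$. I would first try to invoke the necessity part of the elliptic/parabolic Wiener-type theory in the quantitative form used in \cite[Appendix~A]{haraParabolic}. If that is not enough, I would run a direct energy argument. Test $\mathcal Hv=1$ against $(1-\phi)^2$ times a cutoff, where $\phi$ is an almost-minimizer for $\capacity(E,B(\xi,2r))$. This shows that the average of $v$ over a small ball $B(\xi,\rho)$ is at least $c\rho^2\bigl(1-C\epsilon^{1/2}\bigr)$ at suitable scales. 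Iterating over dyadic scales, using the uniform bound (ii) at each scale, then converts the smallness of capacity at a single scale into the quantitative lower bound on $\gamma$.
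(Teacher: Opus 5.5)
Your forward implication is correct and is the paper's: Theorem~\ref{thm:main} with a fixed $\beta$, the support depth being at most $\sqrt T$. The reverse implication has a genuine gap at exactly the step you call the main obstacle, and neither remedy you propose closes it.

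Your first formulation is the right one: a contradiction at points whose distance from $\xi$ is a \emph{fixed} small multiple $\theta r$, with $\theta$ depending on $C$. The later appeal to failure of the CDC ``at many scales'' and to iteration over dyadic scales is not available. The negation of \eqref{eqn:CDC} gives small capacity at one scale only, and this does not pass to smaller scales. For example, if $\overline{B(\xi,r)}\setminus D$ is a ball of radius $\rho_0\ll r$ with $\xi$ on its boundary, the capacity at scale $r$ is small, yet below scale $\rho_0$ the complement is fat and solutions are pinned to zero near $\xi$.

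The energy test you sketch also aims too low. It gives at best an average of $v$ over $B(\xi,\rho)$ of order $\rho^2$. With the datum at scale $r$ this yields only $u\ge cr^{\beta-2}\rho^2$, which is compatible with $u\le Cr^{\beta-\ass}\rho^{\ass}$ because $\ass<2$. Rescaling the datum to scale $\rho$ would instead require small capacity at scale $\rho$, which you do not have. What is needed is a lower bound of order $r^2$ at points of $D$ within distance $\theta r$ of $\xi$.

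Finally, the estimate in \cite[Appendix~A]{haraParabolic} is the Ziemer-type decay under the CDC, i.e.\ the forward direction. The equilibrium potential attached to $\capacity(E,B(\xi,2r))$ is neither a sub- nor a supersolution for $\mathcal H$, so your pointwise bound $v\ge cr^2(1-\text{potential})$ is not justified as stated.

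The missing ingredient is a comparison with a \emph{parabolic} capacitary potential whose initial value is the capacity test function, followed by an energy estimate. It is cleanest done directly rather than by contradiction.
\begin{itemize}
\item[(a)] Set $U=D\cap B(\xi,R)$ and $K=\overline{B(\xi,R)}\setminus D$. The datum $R^{-2}\bm{1}_U$ gives $u\le C(\dist(x,\partial D)/R)^{\ass}$ by (ii).
\item[(b)] Let $v$ and $w$ solve $\mathcal Hv=R^{-2}$ in $B(\xi,R)\times(0,R^2)$ and $\mathcal Hw=R^{-2}$ in $U\times(0,R^2)$, with zero parabolic values. Lemma~\ref{lem:positive-response} gives $v\ge c$ on $B(\xi,R/2)\times(R^2/2,R^2)$, and comparison gives $w\le u$.
\item[(c)] Take any $\eta\in C_c^\infty(B(\xi,2R))$ with $0\le\eta\le1$ and $\eta=1$ near $K$. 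Let $h$ solve $\mathcal Hh=0$ in $(B(\xi,2R)\setminus K)\times(0,R^2)$ with $h=1$ on $K$, $h=0$ on $\partial B(\xi,2R)$, and $h(\cdot,0)=\eta$.
\item[(d)] The function $v-w$ solves the homogeneous equation in $U\times(0,R^2)$. It has zero initial value and lateral values at most one on $\partial D\cap B(\xi,R)$ and zero elsewhere, so $v-w\le h$.
\item[(e)] Choose $\theta$ with $C\theta^{\ass}\le c/2$. Then $h\ge c/2$ on $(D\cap B(\xi,\theta R))\times(R^2/2,R^2)$, and $h=1$ on $K$. Hence $\iint h^2\ge c\theta^nR^{n+2}$.
\item[(f)] Test the equation for $h$ with $h-\eta$. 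There is no initial term, since $h(\cdot,0)=\eta$. Together with Poincar\'e this gives $\iint h^2\le CR^4\int|\nabla\eta|^2$.
\end{itemize}
Taking the infimum over $\eta$ yields $\capacity(K,B(\xi,2R))\ge c\theta^nR^{n-2}$. This is \eqref{eqn:CDC} with $\gamma$ depending on $n$, $L$, $\beta$, $\ass$, and $C$. The argument works at a single scale with a fixed ratio, and it never freezes or stationarizes $A$.
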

\begin{proof}
{Theorem~\ref{thm:main}, with a fixed $\beta$, proves the forward implication: for every finite $T$, $\|\delta\|_{L^\infty(\spt\nu)}\le\sqrt T<\infty$, so \eqref{eqn:support-depth} is automatic and the estimate has a constant independent of $T$.} For the converse, fix $\xi\in\partial D$ and $R>0$, and set
\[
 U=D\cap B(\xi,R),\qquad K=\overline{B(\xi,R)}\setminus D.
\]
Extend $A$ outside $D$ by the identity matrix, preserving \eqref{eqn:A}. Apply condition~\textup{(ii)} on $D\times(0,R^2)$ to the density $f=R^{-2}{\bm{1}}_U$. For every admissible cylinder,
\[
 \iint_{Q(X,r)}f\,dY
 \le CR^{-2}\min\{r^n,R^n\}r^2
 \le CR^{-\beta}r^{n+\beta}.
\]
For $r\ge R$, the last inequality uses $n+\beta\ge2$; for $r\le R$ it uses $\beta\le2$. The support distance is at most $R$. The assumed estimate and $\beta>\ass$ therefore give
\begin{equation}\label{eqn:cdc-local-bound}
 0\le u(x,t)\le C\left(\frac{\dist(x,\partial D)}R\right)^{\ass}.
\end{equation}

Let $v$ and $w$ be the variational solutions of $\mathcal Hv=R^{-2}$ in $B(\xi,R)\times(0,R^2)$ and $\mathcal Hw=R^{-2}$ in $U\times(0,R^2)$, respectively, with zero initial and Dirichlet boundary values. The comparison principle gives $0\le v\le t/R^2\le1$. Applying Lemma~\ref{lem:positive-response} on cylinders of radius $R/4$ contained in $B(\xi,R)\times(0,R^2)$ gives
\begin{equation}\label{eqn:cdc-ball-lower}
 v(x,t)\ge c>0
\end{equation}
for $x\in B(\xi,R/2)$ and $R^2/2\le t\le R^2$, where $c\le1$ depends only on {$n$ and $L$}. Comparison on interior exhaustions of $U$ gives $0\le w\le u$; the comparison function $u$ is nonnegative.

Choose $\eta\in C_c^\infty(B(\xi,2R))$ with $0\le\eta\le1$ and $\eta=1$ in a neighborhood of $K$. Let $h$ be the variational solution of $\mathcal Hh=0$ in $(B(\xi,2R)\setminus K)\times(0,R^2)$, with boundary values one on $K$, zero on $\partial B(\xi,2R)$, and initial value $\eta$. More precisely, $h-\eta$ belongs to $L^2((0,R^2);H_0^1(B(\xi,2R)\setminus K))$ and has zero initial value. Extending $h$ by one on $K$, we have $h(\cdot,t)\in H_0^1(B(\xi,2R))$ for almost every $t$, and $0\le h\le1$.

The function $v-w$ solves the homogeneous equation in $U\times(0,R^2)$. Its initial value is zero, its outer boundary value is zero, and its boundary value on $\partial D\cap B(\xi,R)$ is at most one. These boundary comparisons are understood in the Sobolev sense. Since $h(\cdot,0)=\eta\ge0$, the comparison principle gives $v-w\le h$. Choose $0<\theta<1/2$ so that $C\theta^{\ass}\le c/2$. By \eqref{eqn:cdc-local-bound}, \eqref{eqn:cdc-ball-lower}, and $\dist(x,\partial D)\le|x-\xi|$, we obtain $h\ge c/2$ in $(D\cap B(\xi,\theta R))\times(R^2/2,R^2)$. On $K$, the extension equals one. Consequently,
\begin{equation}\label{eqn:cdc-energy-lower}
 \int_{R^2/2}^{R^2}\int_{B(\xi,2R)}h^2\,dx\,dt
 \ge c\theta^nR^{n+2}.
\end{equation}

Testing the time-regularized equation for $h$ with $h-\eta$, integrating in time, and using ellipticity and {Young's inequality} gives
\[
 \frac12\|h(\cdot,R^2)-\eta\|_{L^2(B(\xi,2R))}^2
 +\frac12\int_0^{R^2}\int_{B(\xi,2R)}|\nabla h|^2\,dx\,dt
 \le CL^2R^2\int_{B(\xi,2R)}|\nabla\eta|^2\,dx.
\]
There is no initial term because $h(\cdot,0)=\eta$. The Poincar\'e inequality therefore yields
\[
 \int_0^{R^2}\int_{B(\xi,2R)}h^2\,dx\,dt
 \le CR^4\int_{B(\xi,2R)}|\nabla\eta|^2\,dx.
\]
Combining this with \eqref{eqn:cdc-energy-lower} and taking the infimum over $\eta$, we conclude that
\[
 \capacity(K,B(\xi,2R))\ge cR^{n-2}.
\]
Since $\capacity(\overline{B(\xi,R)},B(\xi,2R))$ is a dimensional constant times $R^{n-2}$, this proves {\eqref{eqn:CDC}} with the stated dependence of the constants.
\end{proof}

{
\begin{remark}[Necessity of CDC for divergence-data estimates]\label{rem:cdc-divergence}
The uniform boundary estimate for divergence data in Theorem~\ref{thm:divergence} also implies {\eqref{eqn:CDC}}. More precisely, fix $A$ satisfying \eqref{eqn:A}. Suppose that, for some $0<\ass<1$ and $C>0$, {for every finite $T>0$ and every vector field satisfying \eqref{eqn:data}, there exists a bounded continuous weak solution} of $\mathcal Hu=\divergence\bm{F}$ with zero initial and lateral values such that
\[
 |u(X)|\le CM'\delta(X)^{\ass},
\]
where $\ass$ and $C$ are independent of {$T$, $\bm{F}$, and $M'$}. Then $D$ satisfies {\eqref{eqn:CDC}}, with a constant depending only on {$n$, $L$, $\ass$, and $C$}. Combined with Theorem~\ref{thm:divergence}, this gives a characterization by divergence-data estimates.

To prove necessity, take {$\xi$, $R$, and $U$} as in the proof of Theorem~\ref{thm:cdc-characterization}. Extend $\bm{1}_U$ by zero to $\R^n$, and on $D\times(0,R^2)$ set
\[
 {\bm{F}}(x,t)=\left(R^{-2}\int_{-\infty}^{x_1}
                  \bm{1}_U(a,x_2,\ldots,x_n)\,da,0,\ldots,0\right).
\]
Then $\divergence{\bm{F}}=R^{-2}\bm{1}_U$ in distributions and $|{\bm{F}}|\le2R^{-1}$. Every admissible cylinder on this time interval has $r<\delta(X)/4\le R/4$, so
\[
 \|{\bm{F}}\|_{L^2(Q(X,r))}
 \le CR^{-1}r^{n/2+1}
 \le CR^{-\ass}r^{n/2+\ass}.
\]
The assumed estimate therefore gives $|u(X)|\le C(\delta(X)/R)^{\ass}$. Since the source is nonnegative, comparison gives $u\ge0$. Thus \eqref{eqn:cdc-local-bound} holds, and the remainder of the proof of Theorem~\ref{thm:cdc-characterization} gives {\eqref{eqn:CDC}}. The vector field need not have compact support; the distribution $\divergence{\bm{F}}$ is supported at distance scale at most $R$, as required in the support formulation of Theorem~\ref{thm:divergence}.
\end{remark}
}

\section{{Measure spaces and boundary data}}\label{sec:boundary}

{
In this section and Section~\ref{sec:periodic}, assume
\begin{equation}\label{eqn:finite-global-scale}
 \|\delta\|_{L^\infty(D\times(0,T))}<\infty.
\end{equation}
This is automatic for finite $T$.
}

\begin{definition}\label{def:morrey}
{
For $0<\beta\le1$ and a locally finite signed Radon measure $\nu$, set
}
\begin{equation}\label{eqn:morrey-norm}
 {\|\nu\|_{\mathsf M^{n+\beta}(D\times(0,T))}}
 :=\|\delta\|_{L^\infty(D\times(0,T))}^{\beta}
 \sup_{\substack{X\in D\times(0,T)\\0<r<\delta(X)/4}}
 r^{-n-\beta}|\nu|(Q(X,r)).
\end{equation}
The space $\mathsf M^{n+\beta}(D\times(0,T))$ consists of measures for which this norm is finite. We also use
\begin{equation}\label{eqn:drift-norm}
 {\|\nu\|_{\mathsf M^{n+2\beta}(D\times(0,T))}}
 :=\|\delta\|_{L^\infty(D\times(0,T))}^{2\beta}
 \sup_{\substack{X\in D\times(0,T)\\0<r<\delta(X)/4}}
 r^{-n-2\beta}|\nu|(Q(X,r)),
\end{equation}
and denote the corresponding space by $\mathsf M^{n+2\beta}(D\times(0,T))$.
\end{definition}
{{When the underlying cylinder is fixed, we sometimes omit the domain from the notation for these spaces.} Any omission of a domain in a norm is specified locally.} These norms use only interior cylinders. Background on Morrey spaces and different domain restrictions can be found in \cite{Adams2015,HSS2018}.

\begin{lemma}\label{lem:complete}
Both spaces in Definition~\ref{def:morrey} are Banach spaces.
\end{lemma}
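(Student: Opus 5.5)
We treat the two spaces together. In each case the norm is a positive constant, $\|\delta\|_{L^\infty(D\times(0,T))}^{\beta}$ or its square, times the weighted supremum
\[
 N(\nu):=\sup_{X,\;0<r<\delta(X)/4} r^{-n-\kappa}|\nu|(Q(X,r)),
\]
with $\kappa=\beta$ or $\kappa=2\beta$. By \eqref{eqn:finite-global-scale} this constant is finite. If the constant were zero, $D\times(0,T)$ would be empty, so we exclude that case. It therefore suffices to show that $N$ is a complete norm on the space of locally finite signed Radon measures with $N(\nu)<\infty$.

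\textbf{Norm axioms.} Homogeneity is immediate. The triangle inequality follows from $|\nu_1+\nu_2|\le|\nu_1|+|\nu_2|$ evaluated on each cylinder. For definiteness, suppose $N(\nu)=0$. Then $|\nu|(Q(X,r))=0$ for every admissible cylinder. Every point $Y$ of $D\times(0,T)$ lies in an admissible cylinder: take $X=(y,s+h)$ with $h$ small and $r$ small, which is possible since $\delta$ is continuous and positive at $Y$. These open cylinders cover $D\times(0,T)$, so a countable subfamily (Lindel\"of) does too. Hence $|\nu|=0$.

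\textbf{Completeness.} Let $(\nu_k)$ be Cauchy for $N$. The key step is to turn the $N$-bound into local total-variation control. Fix a compact set $K\subset D\times(0,T)$. Cover $K$ by finitely many admissible cylinders $Q(X_i,r_i)$, which we may do by the covering above. On each of them,
\[
 |\nu_k-\nu_l|(Q(X_i,r_i))\le r_i^{n+\kappa}N(\nu_k-\nu_l).
\]
Hence $(\nu_k)$ is Cauchy in total variation on a neighbourhood of $K$. Total variation on a fixed compact-closure open set is a complete norm on finite signed measures, so $\nu_k$ converges there in total variation. The limits on an exhaustion of $D\times(0,T)$ by open sets $W_m\Subset D\times(0,T)$ are compatible, and together they define a locally finite signed Radon measure $\nu$ with $|\nu_k-\nu|(W)\to0$ for every $W\Subset D\times(0,T)$.

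Each admissible $Q(X,r)$ is relatively compact in $D\times(0,T)$. Indeed, its closure lies at parabolic distance at least $3\delta(X)/4>0$ from the lateral face and from the initial face. At the upper face the cylinder is open in time and stays below $t<T$. Therefore, for fixed $k$,
\[
 |\nu-\nu_k|(Q(X,r))=\lim_l|\nu_l-\nu_k|(Q(X,r))\le r^{n+\kappa}\limsup_l N(\nu_l-\nu_k).
\]
Taking the supremum over $(X,r)$ gives $N(\nu-\nu_k)\le\limsup_lN(\nu_l-\nu_k)\to0$ as $k\to\infty$. It follows that $N(\nu)<\infty$ and that $\nu_k\to\nu$ in the norm.

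\textbf{Main obstacle.} The only delicate point is the relative compactness of the admissible cylinders. This is what makes convergence in local total variation imply convergence on every cylinder appearing in the supremum. Without it, mass could escape to the parabolic boundary, so this step requires explicit use of the condition $r<\delta(X)/4$.
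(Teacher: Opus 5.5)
Your proposal is correct and follows essentially the same route as the paper: finite admissible coverings of compact sets turn the Cauchy property in the Morrey norm into the Cauchy property in local total variation, and the local limits patch together to a locally finite Radon measure. Relative compactness of each admissible cylinder then lets you pass to the limit cylinder by cylinder before taking the supremum; the only cosmetic differences are that you prove definiteness with a Lindel\"of countable cover rather than a finite covering of compacts, and phrase the last step via $\limsup_l N(\nu_l-\nu_k)$ instead of an explicit $\epsilon$--$j_0$ argument.
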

\begin{proof}
{
We prove completeness for $\mathsf M^{n+\beta}(D\times(0,T))$. Let $(\nu_j)$ be a Cauchy sequence in this space. For every compact set $K\Subset D\times(0,T)$, choose finitely many admissible cylinders $Q(X_i,r_i)$ covering $K$. By \eqref{eqn:morrey-norm},
\[
 |\nu_j-\nu_k|(K)
 \le\sum_i|\nu_j-\nu_k|(Q(X_i,r_i))
 \le\|\delta\|_{L^\infty(D\times(0,T))}^{-\beta}
 \sum_i r_i^{n+\beta}\,
 \|\nu_j-\nu_k\|_{\mathsf M^{n+\beta}(D\times(0,T))}.
\]
Hence the restrictions form a Cauchy sequence in total variation on each compact set. Completeness of the space of finite signed measures, applied on a compact exhaustion, gives compatible limits and thus a locally finite signed Radon measure $\nu$.

Fix $\epsilon>0$. Choose $j_0$ so that the norm of $\nu_j-\nu_k$ is at most $\epsilon$ for {$j\ge j_0$ and $k\ge j_0$}. For every admissible cylinder,
\[
 \|\delta\|_{L^\infty(D\times(0,T))}^{\beta}
 r^{-n-\beta}|\nu_j-\nu_k|(Q(X,r))\le\epsilon.
\]
Its closure is compactly contained in $D\times(0,T)$. Therefore total-variation convergence on that closure permits $k\to\infty$ and gives
\[
 \|\delta\|_{L^\infty(D\times(0,T))}^{\beta}
 r^{-n-\beta}|\nu_j-\nu|(Q(X,r))\le\epsilon.
\]
Taking the supremum over {$X$ and $r$} yields
\[
 \|\nu_j-\nu\|_{\mathsf M^{n+\beta}(D\times(0,T))}\le\epsilon,
\]
{for every $j\ge j_0$.} Thus $\nu$ belongs to the space and $\nu_j$ converges to $\nu$ in norm. Replacing $\beta$ by $2\beta$ in this argument proves completeness for the second space. The finite covering estimate also shows that a measure with zero norm vanishes on every compact set and hence is zero. Homogeneity and the triangle inequality follow from the corresponding properties of total variation.

}\end{proof}

For $0<\alpha\le1$ and $E\subset\overline{D\times(0,T)}$, define
\begin{equation}\label{eqn:holder-norm}
 \|u\|_{C^{\alpha,\alpha/2}(E)}
 :=\|u\|_{L^\infty(E)}+
 \|\delta\|_{L^\infty(D\times(0,T))}^{\alpha}
 [u]_{C^{\alpha,\alpha/2}(E)}.
\end{equation}
For functions on $\overline D$, define
\[
 \|u_0\|_{C^\alpha(\overline D)}
 :=\|u_0\|_{L^\infty(\overline D)}+
 \|\delta\|_{L^\infty(D\times(0,T))}^{\alpha}
 [u_0]_{C^\alpha(\overline D)},
\]
where the latter seminorm uses Euclidean distance. Thus the normalization of the initial-data norm also depends on the chosen time interval. On a fixed cylinder these are equivalent to the usual bounded H\"older norms.
\subsection{Compatible nonzero data}\label{sec:homogeneous-boundary}
Let $0<\alpha\le1$, let $g\in C^{\alpha,\alpha/2}(\partial D\times[0,T])$, and let $u_0\in C^\alpha(\overline D)$. Assume
\begin{equation}\label{eqn:boundary-compatibility}
 g(\xi,0)=u_0(\xi)
\end{equation}
for every $\xi\in\partial D$.

\begin{lemma}\label{lem:homogeneous-boundary}
Assume \eqref{eqn:A} and \eqref{eqn:CDC}. The problem
\begin{equation}\label{eqn:h}
 \begin{cases}
 \mathcal Hh=0&\text{in }D\times(0,T),\\
 h=g&\text{on }\partial D\times(0,T),\\
 h(\cdot,0)=u_0&\text{in }D
 \end{cases}
\end{equation}
has a unique bounded continuous weak solution. For a sufficiently small positive exponent $\agh$, now also depending on $\alpha$, it satisfies
\begin{equation}\label{eqn:h-holder}
 \|h\|_{C^{\agh,\agh/2}(\overline{D\times(0,T)})}
 \le C\left(\|g\|_{C^{\alpha,\alpha/2}(\partial D\times[0,T])}
       +\|u_0\|_{C^\alpha(\overline D)}\right).
\end{equation}
The exponent and constant depend only on {$n$, $L$, $\gamma$, and $\alpha$}.
\end{lemma}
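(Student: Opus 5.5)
We prove existence by boundary barriers and interior regularity, combining the lateral CDC decay with an initial-face estimate. The intro notes this is essentially \cite[Theorems~6.1 and~6.2]{haraParabolic}.

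\emph{Step 1 (reduction and extension).} By linearity and scaling, normalize the right side of \eqref{eqn:h-holder} to one. Extend $g$ and $u_0$ to a single compatible function $G$ on $\overline{D}\times[0,T]$ with $[G]_{C^{\alpha,\alpha/2}}\le C$ and $\|G\|_\infty\le C$. This can be done by a McShane-type extension in the metric $d_p$ of the function defined on the parabolic boundary. That function is H\"older on the union of the two faces because of \eqref{eqn:boundary-compatibility}: for $\xi\in\partial D$ and $x\in D$, we have $|g(\xi,t)-u_0(x)|\le|g(\xi,t)-g(\xi,0)|+|u_0(\xi)-u_0(x)|$. We then need the supremum bound $\|h\|_\infty\le\|G\|_\infty$, which follows from Lemma~\ref{lem:std-comparison} on finite strips once existence is known.

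\emph{Step 2 (existence).} For existence, approximate $G$ uniformly by smooth $G_k$ with controlled H\"older seminorms. Solve $\mathcal H h_k=0$ with data $G_k$ on bounded interior exhaustions and take limits using local energy and interior estimates, as in Subsection~\ref{subsec:boundary-decay}. Alternatively, write $h_k=G_k+\phi_k$, where $\phi_k$ solves $\mathcal H\phi_k=-\mathcal H G_k$ with zero data. Stability as $k\to\infty$ follows from the maximum principle $\|h_k-h_l\|_\infty\le\|G_k-G_l\|_\infty$.

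\emph{Step 3 (boundary modulus).} This is the main obstacle. We need a uniform modulus at each parabolic boundary point $Z_0=(\xi,t_0)$, namely
\[
|h(X)-G(Z_0)|\le C\,d_p(X,Z_0)^{\agh}.
\]
At lateral points, apply the quantitative CDC oscillation decay for homogeneous solutions \cite[Lemma~3.9]{haraParabolic}, as recalled before Proposition~\ref{thm:std-barrier}, to $h-G(Z_0)$ on $(D\cap B(\xi,\rho))\times(t_0-\rho^2,t_0+\rho^2)$. On that set the boundary oscillation of the data is at most $C\rho^\alpha$. This gives
\[
\osc_{Q_r}h\le C(r/\rho)^{\abd}\osc_{Q_\rho}h+C\rho^\alpha
\]
at every scale, and iteration yields exponent $\min\{\abd,\alpha\}/2$ or so. When $t_0-\rho^2<0$, extend $h-G(Z_0)$ backwards by its initial value, or handle the part near $t=0$ separately. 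At initial points $(x_0,0)$, compare $h$ with the barriers $u_0(x_0)\pm\bigl(C|x-x_0|^{\alpha}\wedge\cdots\bigr)$. Concretely, use the supersolution $u_0(x_0)+C\rho^\alpha+C\rho^{-2}t+\text{(a cut-off term)}$ on $B(x_0,\rho)$, combined with the lateral estimate where $B(x_0,\rho)$ meets $\partial D$, and optimize $\rho$ in terms of $t$. The delicate point is gluing the lateral and initial estimates at corner points and keeping constants independent of $T$. This is exactly where the normalization by $\|\delta\|_\infty$ in the norms enters: for distances above $\|\delta\|_\infty$, use the supremum bound.

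\emph{Step 4 (interior and conclusion).} Combine the boundary modulus with the homogeneous interior oscillation estimate, which is Lemma~\ref{lem:interior-forced} with zero forcing, using the two-point case analysis of Proposition~\ref{prop:two-point}. There $R=\min\{\delta(X)/16,\|\delta\|_\infty\}$, and the boundary decay is replaced by the Step~3 modulus relative to a nearest boundary point. Taking $\agh\le\min\{\ain,\text{Step~3 exponent}\}$ gives \eqref{eqn:h-holder}. Uniqueness follows from Lemma~\ref{lem:std-comparison} on each finite time interval.
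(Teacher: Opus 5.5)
Apart from the existence step, your plan is the paper's. Compatibility makes the combined boundary datum H\"older on the parabolic boundary, and the boundary modulus is iterated from the capacitary lateral decay together with an initial-face estimate. The interior is handled by the two-point splitting of Proposition~\ref{prop:two-point}, with the outer radius capped at $\|\delta\|_{L^\infty(D\times(0,T))}$, and uniqueness comes from Lemma~\ref{lem:std-comparison}. Your existence step is a valid and more self-contained substitute for the paper's appeal to \cite[Theorem~6.1]{haraParabolic}. There you write $h_k=G_k+\phi_k$; here $-\mathcal HG_k=-\partial_tG_k+\divergence(A\nabla G_k)$ is a bounded density plus the divergence of a bounded field, so Theorems~\ref{thm:main} and~\ref{thm:divergence} apply under the standing finite-scale assumption, and the maximum principle then makes $h_k$ uniformly Cauchy.

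The genuine gap is at the initial face. The paper cites \cite[Chapter~III]{LSU} there, whereas you write down the heat-equation barrier, which fails for merely measurable $A$. For a spatial cut-off $\psi$, $\mathcal H\psi=-\divergence(A\nabla\psi)$ is an unsigned first-order distribution that no multiple of $\rho^{-2}t$ can dominate. Hence ``$C\rho^{-2}t$ plus a cut-off term'' is not a supersolution. Also, $u_0(x_0)+C|x-x_0|^\alpha$ is not a supersolution even for $A=I$, since $\Delta|x|^\alpha=\alpha(\alpha+n-2)|x|^{\alpha-2}>0$ when $n\ge2$. Likewise, extending $h-G(Z_0)$ to $t<0$ by its initial value does not give a solution there.

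The barrier must itself be a solution. Extend $A$ by the identity and let $V$ solve $\mathcal HV=0$ in $B(x_0,\rho)\times(0,\rho^2)$. Give it lateral value $1$ and a continuous initial value $0\le\eta\le1$ that vanishes on $B(x_0,\rho/2)$ and equals $1$ near $\partial B(x_0,\rho)$. Then $0\le V\le1$, and the zero extension of $V$ to negative times is a weak solution in $B(x_0,\rho/2)\times(-\rho^2,\rho^2)$. The homogeneous interior estimate (Lemma~\ref{lem:interior-forced} with zero forcing) therefore gives $V(x,t)\le C\bigl((|x-x_0|+\sqrt t)/\rho\bigr)^{\ain}$. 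Now compare $h$ on $(D\cap B(x_0,\rho))\times(0,\rho^2)$ with
\[
 u_0(x_0)\pm\Bigl(\bigl([g]_{C^{\alpha,\alpha/2}(\partial D\times[0,T])}+[u_0]_{C^\alpha(\overline D)}\bigr)\rho^\alpha+2\|h\|_{L^\infty(D\times(0,T))}V\Bigr).
\]
On $\partial D\cap B(x_0,\rho)$, compatibility bounds $|g(\xi,t)-u_0(x_0)|$ by the first term when $t<\rho^2$, and on $\partial B(x_0,\rho)$ one has $V=1$. Optimizing $\rho$ then gives a positive exponent at every initial point, including the corner points, so no separate gluing is needed there.

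Near $t=0$ at lateral points, apply the capacitary decay to the truncations $(h-G(Z_0)-\omega)_+$ and $(G(Z_0)-\omega-h)_+$, where $\omega$ is the data oscillation on the cylinder. These vanish on the lateral and initial portions, so their zero extensions to $t<0$ are subsolutions. With these two repairs the rest of your argument goes through.
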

\begin{proof}
{The compatibility condition allows $g$ and $u_0$ to define a continuous function on the parabolic boundary.} Its H\"older seminorm is controlled by the sum of the two seminorms: for an initial point $(x,0)$ and a lateral point $(\xi,t)$, insert the common value $g(\xi,0)=u_0(\xi)$.
The homogeneous Dirichlet construction by exhaustion, the maximum principle, and the local capacitary boundary estimate give a bounded continuous solution; this is the case $f=0$ in \cite[Theorem~6.1]{haraParabolic}. The same construction works on unbounded domains by local exhaustion and bounded comparison. The initial face is regular independently of the spatial boundary, and the compatibility condition handles their intersection.

{
Let $\widetilde g$ be the compatible function on the initial and lateral faces. For $P$ on the parabolic boundary, let $\mathcal Q_P(r)$ be the local boundary cylinder, taken forward from time zero at an initial point and backward at a lateral point. Set
\[
 \Omega_P(r)=\sup_{\mathcal Q_P(r)\cap(D\times(0,T))}|h-\widetilde g(P)|.
\]
At a lateral point away from time zero, use the capacitary decay estimate cited in Proposition~\ref{thm:std-barrier}, subtracting the constant $\widetilde g(P)$ and comparing with the oscillation of the boundary values. At an initial point whose spatial neighborhood lies in $D$, use the homogeneous initial-boundary estimate in \cite[Chapter~III]{LSU}. Near the intersection of the two faces, the same comparison uses the common boundary value supplied by \eqref{eqn:boundary-compatibility}; the lateral decay and the initial estimate apply on their respective portions. These estimates yield a common structural exponent $\kappa>0$ and the bound
\[
 \Omega_P(\theta r)
 \le C\theta^\kappa\Omega_P(r)
 +C[\widetilde g]_{C^{\alpha,\alpha/2}(\partial_p(D\times(0,T)))}r^\alpha.
\]
Choose $0<\agh<\min\{\alpha,\kappa\}$ and then $\theta$ so that $C\theta^\kappa\le\theta^\agh/2$. Iteration yields
\[
 \Omega_P(\theta^kR)
 \le C\theta^{k\agh}\left(\Omega_P(R)
 +[\widetilde g]_{C^{\alpha,\alpha/2}(\partial_p(D\times(0,T)))}R^\alpha\right).
\]
Thus the boundary oscillation is controlled with exponent $\agh$. Combining this bound with the interior homogeneous estimate, by the two-point splitting in the proof of Proposition~\ref{prop:two-point}, gives the corresponding estimate for pairs of interior points.
} Truncate the outer radius at a fixed multiple of $\|\delta\|_{L^\infty(D\times(0,T))}$. For separations below this scale the resulting estimate is
\[
 |h(X)-h(Y)|\le C\left(
 {\|g\|_{C^{\alpha,\alpha/2}(\partial D\times[0,T])}}+{\|u_0\|_{C^\alpha(\overline D)}}\right)
 \left(\frac{d_p(X,Y)}{\|\delta\|_{L^\infty(D\times(0,T))}}\right)^{\agh}.
\]
For larger separations use the maximum principle. This proves \eqref{eqn:h-holder}, including when the scale is limited by $\sqrt T$. Bounded comparison proves uniqueness.
\end{proof}

\begin{theorem}\label{thm:inhom-principal}
Let $0<\beta\le1$. Assume \eqref{eqn:A}, \eqref{eqn:CDC}, and $\nu\in\mathsf M^{n+\beta}(D\times(0,T))$. {
For compatible data {$g$ and $u_0$} satisfying the hypotheses of Lemma~\ref{lem:homogeneous-boundary},
} the problem with forcing $\nu$, lateral value $g$, and initial value $u_0$ has a unique bounded continuous weak solution. {After decreasing the exponent $\agh$ in Theorem~\ref{thm:main} to accommodate $\alpha$ if necessary,}
\begin{equation}\label{eqn:inhom-estimate}
\begin{aligned}
 \|u\|_{C^{\agh,\agh/2}(\overline{D\times(0,T)})}
 \le C\bigl(&{\|\nu\|_{\mathsf M^{n+\beta}(D\times(0,T))}}
 +\|g\|_{C^{\alpha,\alpha/2}(\partial D\times[0,T])}\\
 &+\|u_0\|_{C^\alpha(\overline D)}\bigr).
\end{aligned}
\end{equation}
\end{theorem}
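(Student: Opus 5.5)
The proof will use superposition. Write $u=h+v$, where $h$ is the solution of \eqref{eqn:h} supplied by Lemma~\ref{lem:homogeneous-boundary}, and $v$ is the solution of \eqref{eqn:intro-equation} with forcing $\nu$ and zero initial and lateral values supplied by Theorem~\ref{thm:main}. Both are bounded, continuous up to $\overline{D\times(0,T)}$, and belong to $V_{\loc}$. Hence $u$ is a bounded continuous weak solution attaining $g$ laterally and $u_0$ initially. Uniqueness will follow from Lemma~\ref{lem:std-comparison}, applied on each finite time interval to the difference of two such solutions, which solves the homogeneous problem with zero parabolic values.

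To apply Theorem~\ref{thm:main}, I first check its hypotheses. Put $\Delta:=\|\delta\|_{L^\infty(D\times(0,T))}<\infty$, which is finite by \eqref{eqn:finite-global-scale}. Then \eqref{eqn:support-depth} holds with $\|\delta\|_{L^\infty(\spt\nu)}\le\Delta$. The growth condition \eqref{eqn:morrey} holds with
\[
 M=\Delta^{-\beta}\|\nu\|_{\mathsf M^{n+\beta}(D\times(0,T))}.
\]
If $\nu=0$, then $v=0$ and nothing needs to be done.

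The main step is a scale-uniform estimate for $v$. Theorem~\ref{thm:main} is stated with the support scale $\Delta_\nu:=\|\delta\|_{L^\infty(\spt\nu)}$, which may be smaller than $\Delta$. Since $\beta-\ass>0$, the bound \eqref{eqn:main-estimate} gives
\[
 \|v\|_{L^\infty}\le CM\Delta_\nu^{\beta}\le CM\Delta^\beta=C\|\nu\|_{\mathsf M^{n+\beta}}.
\]
Similarly, since $\beta-\agh>0$, the bound \eqref{eqn:hoelder-estimate} gives
\[
 \Delta^{\agh}[v]_{C^{\agh,\agh/2}}\le CM\Delta_\nu^{\beta-\agh}\Delta^{\agh}\le CM\Delta^{\beta}=C\|\nu\|_{\mathsf M^{n+\beta}}.
\]
Both inequalities use only the monotonicity of positive powers. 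Together they give the normalized norm bound \eqref{eqn:holder-norm} for $v$.

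It remains to match exponents. Lemma~\ref{lem:homogeneous-boundary} gives an exponent, call it $\agh'$, depending on $n$, $L$, $\gamma$, and $\alpha$. Theorem~\ref{thm:main} gives $\agh$. I replace $\agh$ by $\min\{\agh,\agh'\}$. Lowering the exponent preserves the normalized estimate for each piece. For $d_p(X,Y)\le\Delta$, this follows from
\[
 \Delta^{a'}\frac{|w(X)-w(Y)|}{d_p^{a'}}
 =\Delta^{a}\frac{|w(X)-w(Y)|}{d_p^{a}}\Bigl(\frac{d_p}{\Delta}\Bigr)^{a-a'}
\]
for $a'\le a$. For $d_p(X,Y)>\Delta$, the difference is bounded by $2\|w\|_{L^\infty}$.

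Finally, the triangle inequality applied to $u=h+v$ gives \eqref{eqn:inhom-estimate}. I expect the only delicate points to be bookkeeping: the normalization by $\Delta$ rather than $\Delta_\nu$, which is handled by the positivity of $\beta-\ass$ and $\beta-\agh$, and continuity of $u$ at the corner where the initial and lateral faces meet, which is inherited from $h$ via \eqref{eqn:boundary-compatibility} and from $v$ via \eqref{eqn:main-estimate}.
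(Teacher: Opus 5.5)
Your proposal is correct and follows the paper's proof essentially step for step. You superpose the zero-boundary solution $v$ from Theorem~\ref{thm:main}, with $M=\|\delta\|_{L^\infty(D\times(0,T))}^{-\beta}\|\nu\|_{\mathsf M^{n+\beta}(D\times(0,T))}$ and the support scale replaced by the global scale using $\beta-\ass>0$ and $\beta-\agh>0$, with the extension $h$ from Lemma~\ref{lem:homogeneous-boundary}; you then lower the exponent by splitting separations at the global scale and get uniqueness from the comparison principle, exactly as the paper does, only with the bookkeeping written out more explicitly.
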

\begin{proof}
{Let $v$ be {the solution with zero initial and Dirichlet boundary values supplied by} Theorem~\ref{thm:main}, and put $u=v+h$. Substituting
\[
 M=\|\delta\|_{L^\infty(D\times(0,T))}^{-\beta}
 {\|\nu\|_{\mathsf M^{n+\beta}(D\times(0,T))}}
\]
into \eqref{eqn:main-estimate} and \eqref{eqn:hoelder-estimate}, and using
$\|\delta\|_{L^\infty(\spt\nu)}\le\|\delta\|_{L^\infty(D\times(0,T))}$, gives the normalized bound for $v$. Combining this bound with \eqref{eqn:h-holder} proves \eqref{eqn:inhom-estimate}. Lowering an exponent preserves the normalized bound by using the seminorm below the global distance scale and the supremum norm above it. The comparison principle applied to the difference gives uniqueness.}
\end{proof}

\subsection{Measure and divergence data}
\begin{theorem}\label{thm:mixed}
Assume \eqref{eqn:A} and \eqref{eqn:CDC}, under the standing assumption \eqref{eqn:finite-global-scale}. Let $0<\beta<1$ and let $\nu$ be a locally finite signed Radon measure. {Suppose $\bm{F}\in L^2_{\mathrm{loc}}(D\times(0,T);\R^n)$. Assume that there are finite constants {$M$ and $M'$} such that, for every $X\in D\times(0,T)$ and $0<r<\delta(X)/4$,}
\begin{equation}\label{eqn:mixed-data}
\begin{split}
 |\nu|(Q(X,r))&\le Mr^{n+\beta},\\
 \|{\bm{F}}\|_{L^2(Q(X,r))}
 &\le M'r^{n/2+\beta}.
\end{split}
\end{equation}
{Choose {$\ass$, $\ain$, and $\agh$} as in Theorem~\ref{thm:main}.}
Then the problem
\begin{equation}\label{eqn:mixed-pde}
 \mathcal Hu=\nu+\operatorname{div}{\bm{F}}
 \quad\text{in }D\times(0,T),\qquad
 u=0\quad\text{on }\partial_p(D\times(0,T))
\end{equation}
has a unique bounded weak solution in
$V_{\mathrm{loc}}\cap C(\overline{D\times(0,T)})$. It satisfies
\begin{equation}\label{eqn:mixed-boundary}
 |u(X)|\le C(M+M')
 \|\delta\|_{L^\infty(D\times(0,T))}^{\beta-\ass}
 \delta(X)^{\ass}
\end{equation}
and
\begin{equation}\label{eqn:mixed-holder}
 [u]_{C^{\agh,\agh/2}(\overline{D\times(0,T)})}
 \le C(M+M')\|\delta\|_{L^\infty(D\times(0,T))}^{\beta-\agh}.
\end{equation}
Using the normalized H\"older norm in \eqref{eqn:holder-norm}, these bounds imply
\begin{equation}\label{eqn:mixed-norm}
 \|u\|_{C^{\agh,\agh/2}(\overline{D\times(0,T)})}
 \le C\left(
 {\|\nu\|_{\mathsf M^{n+\beta}(D\times(0,T))}}
 +{\||{\bm{F}}|^2dX\|_{\mathsf M^{n+2\beta}(D\times(0,T))}}^{1/2}\right).
\end{equation}
{Constants depend only on the structural parameters, $\beta$, and the chosen exponents, and are independent of {$\operatorname{diam}D$ and $T$}.}
\end{theorem}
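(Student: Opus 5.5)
The plan is to superpose the measure part and the divergence part, then recombine the estimates. Decompose $u=u_1+u_2$. Here $u_1$ is the solution of $\mathcal Hu_1=\nu$ given by Theorem~\ref{thm:main}. The support-depth condition \eqref{eqn:support-depth} holds automatically by \eqref{eqn:finite-global-scale}, since $\|\delta\|_{L^\infty(\spt\nu)}\le\|\delta\|_{L^\infty(D\times(0,T))}$. The term $u_2$ solves $\mathcal Hu_2=\divergence\bm{F}$ with zero parabolic values.

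For $u_1$, use $\beta-\ass>0$ and $\beta-\agh>0$ to replace the support scale in \eqref{eqn:main-estimate} and \eqref{eqn:hoelder-estimate} by the global scale $\|\delta\|_{L^\infty(D\times(0,T))}$. This gives the $M$-parts of \eqref{eqn:mixed-boundary} and \eqref{eqn:mixed-holder}.

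For $u_2$, apply Theorem~\ref{thm:divergence}. Its hypothesis \eqref{eqn:data} is stated with exponent $\ass$, but since $\ass<\beta$ and admissible radii satisfy $r<\delta(X)/4\le\|\delta\|_{L^\infty}$, we have
\[
 \|\bm{F}\|_{L^2(Q(X,r))}\le M'r^{n/2+\beta}\le M'\|\delta\|_{L^\infty(D\times(0,T))}^{\beta-\ass}r^{n/2+\ass}.
\]
Thus Theorem~\ref{thm:divergence} applies with $M'$ replaced by $M'\|\delta\|_{L^\infty}^{\beta-\ass}$. It yields a unique bounded continuous solution in $V_{\loc}$ with $|u_2(X)|\le CM'\|\delta\|_{L^\infty}^{\beta-\ass}\delta(X)^{\ass}$, which is the $M'$-part of \eqref{eqn:mixed-boundary}. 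The support-depth condition \eqref{eqn:divergence-support-depth} again follows from \eqref{eqn:finite-global-scale}.

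The Hölder seminorm of $u_2$ is the main point, because Theorem~\ref{thm:divergence} gives only the pointwise bound. I will repeat the two-point splitting of Proposition~\ref{prop:two-point} with $R=\min\{\delta(X)/16,\|\delta\|_{L^\infty}\}$, using Lemma~\ref{lem:interior-forced} with $f=0$ and the divergence growth bound with exponent $\beta$.
\begin{itemize}
\item When $2r\ge R$, or when $r$ exceeds the global scale, the boundary decay of $u_2$ gives the bound directly, as before.
\item When $2r<R$, Lemma~\ref{lem:interior-forced} gives
\[
 |u_2(X)-u_2(Y)|\le C(r/R)^{\ain}\bigl(\osc_{Q(X,R)}u_2+M'R^\beta\bigr).
\]
The oscillation is bounded by $CM'\|\delta\|^{\beta-\ass}R^{\ass}$ using \eqref{eqn:divergence-pointwise} and $\delta\le17R$ on $Q(X,R)$. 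The term $M'R^\beta$ is bounded by the same quantity.
\end{itemize}
The concluding case analysis with $\agh=\min\{\ain,\ass\}$ is then identical to that in Proposition~\ref{prop:two-point}. One point needs care: $\ain$ was chosen below $\beta$ and below the homogeneous exponent, so Lemma~\ref{lem:interior-forced} applies with the same $\beta$. The estimate passes to the closure by continuity.

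Uniqueness follows from Lemma~\ref{lem:std-comparison} on finite strips applied to the difference of two bounded continuous solutions, which solves the homogeneous equation with zero parabolic values.

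Finally, \eqref{eqn:mixed-norm} follows by substituting the normalized quantities. By \eqref{eqn:morrey-norm}, $M=\|\delta\|^{-\beta}\|\nu\|_{\mathsf M^{n+\beta}}$. By \eqref{eqn:drift-norm}, the optimal $M'$ equals $\|\delta\|^{-\beta}\||\bm{F}|^2dX\|_{\mathsf M^{n+2\beta}}^{1/2}$, since $\|\bm{F}\|_{L^2(Q)}^2\le (M')^2r^{n+2\beta}$. Inserting these into \eqref{eqn:mixed-boundary}, together with $\delta(X)\le\|\delta\|$, bounds the supremum. Inserting them into \eqref{eqn:mixed-holder}, multiplied by $\|\delta\|^{\agh}$, bounds the normalized seminorm.

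All constants are scale invariant, so they are independent of $\operatorname{diam}D$ and $T$.
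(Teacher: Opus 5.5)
Your proof is correct. Its overall structure matches the paper's proof:
\begin{itemize}
\item superposition of the measure part and the divergence part;
\item Theorem~\ref{thm:main} for $\nu$;
\item Theorem~\ref{thm:divergence} applied with the rescaled constant $M'\|\delta\|_{L^\infty(D\times(0,T))}^{\beta-\ass}$;
\item Lemma~\ref{lem:std-comparison} for uniqueness;
\item the same least-constant bookkeeping for \eqref{eqn:mixed-norm}.
\end{itemize}

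Where you differ is in how you obtain the H\"older estimate \eqref{eqn:mixed-holder}. The paper proves the interior oscillation estimate \eqref{eqn:interior-forced} for the full mixed solution. Since Lemma~\ref{lem:interior-forced} is stated only for bounded $f$, the paper approximates $\nu_\pm$ by bounded densities with uniform growth constants, as in Proposition~\ref{prop:approximation}. It then passes to the locally uniform limit with local energy convergence, and runs the two-point argument of Proposition~\ref{prop:two-point} once for $u$.

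You instead take the measure component's seminorm directly from \eqref{eqn:hoelder-estimate}, using $\beta-\agh>0$ to enlarge the support scale to the global one. You run the two-point argument only for the divergence component. There $f=0$, so Lemma~\ref{lem:interior-forced} applies to $u_2$ as it stands, with the original $\beta$ (legitimate because $\ain<\beta$). Subadditivity of the seminorm then gives \eqref{eqn:mixed-holder}.

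Your route is shorter and needs no approximation or limit passage. The paper's route yields a slightly stronger intermediate statement: local oscillation decay with constant $M+M'$ for the mixed solution itself. That statement is not needed for the conclusions of the theorem.
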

A weak solution in this statement satisfies
\[
\begin{aligned}
 -\iint_{D\times(0,T)}u\partial_t\varphi\,dX
 +\iint_{D\times(0,T)}A\nabla u\cdot\nabla\varphi\,dX
 &=\int_{D\times(0,T)}\varphi\,d\nu\\
 &\quad-\iint_{D\times(0,T)}{\bm{F}}\cdot\nabla\varphi\,dX,
\end{aligned}
\]
for $\varphi\in C_c^\infty(D\times(0,T))$. The divergence term is interpreted in distributions.

\begin{proof}
By Theorem~\ref{thm:main}, the measure problem has a bounded continuous solution with zero boundary values. Its estimates imply \eqref{eqn:mixed-boundary} for the measure component, since the support scale is at most the global scale. {For the divergence component, \eqref{eqn:mixed-data} gives
\[
 \|{\bm{F}}\|_{L^2(Q(X,r))}
 \le M'\|\delta\|_{L^\infty(D\times(0,T))}^{\beta-\ass}r^{n/2+\ass},
\]
since $r<\delta(X)/4$ and $\beta>\ass$. Theorem~\ref{thm:divergence} therefore gives its contribution to \eqref{eqn:mixed-boundary}.} Adding the two solutions gives existence and \eqref{eqn:mixed-boundary}. Uniqueness follows by the comparison principle for the homogeneous difference.

{
Fix $Q(X_0,R)$ with $R\le\delta(X_0)/64$. Restriction and zero extension preserve the growth bounds by the local part of Lemma~\ref{lem:local-potential}. Approximate the positive and negative measure components by bounded densities with uniform growth constants, as in Proposition~\ref{prop:approximation}. Lemma~\ref{lem:interior-forced} applies to these approximating equations. Passing to the locally uniform limit, with local weak energy convergence as in the proof of Theorem~\ref{thm:main}, gives \eqref{eqn:interior-forced} for the mixed-data solution, with the constants {$M$ and $M'$} in \eqref{eqn:mixed-data}. The same approximation supplies its continuous interior representative.
}

{Apply the two-point argument in the proof of Proposition~\ref{prop:two-point}, using \eqref{eqn:mixed-boundary} for the boundary decay and \eqref{eqn:interior-forced} for the local oscillation. The scale conversion required there is
\[
 (M+M')R^\beta
 \le (M+M')\|\delta\|_{L^\infty(D\times(0,T))}^{\beta-\ass}R^\ass
\]
for $R\le\|\delta\|_{L^\infty(D\times(0,T))}$; the boundary estimate bounds the outer oscillation by the same expression.} Taking $\agh=\min\{\alpha_{\mathrm{in}},\ass\}$ proves \eqref{eqn:mixed-holder}. Separations larger than the global scale are handled by the supremum bound. Pass to the closure by continuity. {To pass to the normalized full-domain norm, the boundary estimate first gives
\[
 \|u\|_{L^\infty(D\times(0,T))}
 \le C(M+M')\|\delta\|_{L^\infty(D\times(0,T))}^{\beta}.
\]
Adding this to \eqref{eqn:mixed-holder} multiplied by
$\|\delta\|_{L^\infty(D\times(0,T))}^{\agh}$ gives
\[
 \|u\|_{C^{\agh,\agh/2}(\overline{D\times(0,T)})}
 \le C(M+M')\|\delta\|_{L^\infty(D\times(0,T))}^{\beta}.
\]
Finally choose {$M$ and $M'$} to be the least constants in \eqref{eqn:mixed-data}. By the definitions of the measure norms,
\[
\begin{split}
 M\|\delta\|_{L^\infty(D\times(0,T))}^{\beta}
 &=\|\nu\|_{\mathsf M^{n+\beta}(D\times(0,T))},\\
 M'\|\delta\|_{L^\infty(D\times(0,T))}^{\beta}
 &=\||{\bm{F}}|^2dX\|_{\mathsf M^{n+2\beta}(D\times(0,T))}^{1/2}.
\end{split}
\]
Substitution proves \eqref{eqn:mixed-norm}.}
\end{proof}
{\begin{corollary}\label{cor:mixed-boundary}
Under the assumptions of Theorem~\ref{thm:mixed}, let the lateral datum $g$ and initial datum $u_0$ satisfy the compatibility and H\"older assumptions of Lemma~\ref{lem:homogeneous-boundary}, with exponent $\alpha$. Then \eqref{eqn:mixed-pde} with these nonzero values has a unique bounded continuous weak solution. After decreasing $\agh$ to accommodate $\alpha$ if necessary,
\[
\begin{aligned}
 \|u\|_{C^{\agh,\agh/2}(\overline{D\times(0,T)})}
 \le C\bigl(&{\|\nu\|_{\mathsf M^{n+\beta}(D\times(0,T))}}
 +{\||{\bm{F}}|^2dX\|_{\mathsf M^{n+2\beta}(D\times(0,T))}}^{1/2}\\
 &+\|g\|_{C^{\alpha,\alpha/2}(\partial D\times[0,T])}
 +\|u_0\|_{C^\alpha(\overline D)}\bigr).
\end{aligned}
\]
All norms are normalized as in \eqref{eqn:holder-norm}.
\end{corollary}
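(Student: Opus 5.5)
The plan is to superpose two solutions, exactly as in the proof of Theorem~\ref{thm:inhom-principal}. Let $v$ be the solution of \eqref{eqn:mixed-pde} with zero parabolic boundary values supplied by Theorem~\ref{thm:mixed}. Let $h$ be the solution of the homogeneous problem \eqref{eqn:h} with lateral datum $g$ and initial datum $u_0$ supplied by Lemma~\ref{lem:homogeneous-boundary}. Put $u=v+h$.

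\emph{Existence and uniqueness.} Both functions are bounded, lie in $V_{\loc}\cap C(\overline{D\times(0,T)})$, and are weak solutions. Linearity of the weak formulation, with the divergence term interpreted in distributions, then shows that $u$ solves the problem with forcing $\nu+\divergence\bm F$. Its boundary values are $g$ on the lateral face and $u_0$ on the initial face; these are attained continuously because $v$ vanishes continuously there and $h$ attains $\widetilde g$ continuously. For uniqueness, the difference of two bounded continuous weak solutions solves the homogeneous equation with zero parabolic values. Lemma~\ref{lem:std-comparison}, applied on each finite time interval and to both signs, forces the difference to vanish.

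\emph{Exponents.} The two estimates come with different exponents, so first fix a common one. Theorem~\ref{thm:mixed} gives \eqref{eqn:mixed-norm} with $\agh=\min\{\ain,\ass\}$. Lemma~\ref{lem:homogeneous-boundary} gives \eqref{eqn:h-holder} with an exponent $\alpha_h$ depending also on $\alpha$. Take the new exponent $\alpha'=\min\{\agh,\alpha_h\}$.

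\emph{Lowering the exponent.} We must check that the normalized norm \eqref{eqn:holder-norm} does not increase, up to a constant, when the exponent is lowered from $\alpha_1$ to $\alpha'\le\alpha_1$. Write $\Delta=\|\delta\|_{L^\infty(D\times(0,T))}$, which is finite by \eqref{eqn:finite-global-scale}. For $d_p(X,Y)\le\Delta$,
\[
 |w(X)-w(Y)|\le[w]_{C^{\alpha_1,\alpha_1/2}}\,d_p(X,Y)^{\alpha_1}
 \le\Delta^{\alpha_1-\alpha'}[w]_{C^{\alpha_1,\alpha_1/2}}\,d_p(X,Y)^{\alpha'}.
\]
For $d_p(X,Y)>\Delta$, bound $|w(X)-w(Y)|$ by $2\|w\|_{L^\infty}\le2\|w\|_{L^\infty}\Delta^{-\alpha'}d_p(X,Y)^{\alpha'}$. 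Multiplying by $\Delta^{\alpha'}$ gives
\[
 \|w\|_{C^{\alpha',\alpha'/2}}\le3\|w\|_{C^{\alpha_1,\alpha_1/2}}.
\]

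\emph{Conclusion.} Apply the lowering step to $v$ with $\alpha_1=\agh$ and to $h$ with $\alpha_1=\alpha_h$. Add \eqref{eqn:mixed-norm} and \eqref{eqn:h-holder}, and use the triangle inequality for the norm. The constants depend only on the structural data, $\beta$, and $\alpha$, which yields the asserted estimate.

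The only point needing care is that all norms are normalized by the same scale $\Delta$, including $\|u_0\|_{C^\alpha(\overline D)}$, so that the two estimates are compatible. Beyond this I expect no real obstacle, since the argument is bookkeeping on top of Theorem~\ref{thm:mixed} and Lemma~\ref{lem:homogeneous-boundary}.
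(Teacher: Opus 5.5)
Your proposal is correct and follows the paper's proof. Like the paper, you add the homogeneous extension from Lemma~\ref{lem:homogeneous-boundary} to the zero-boundary solution from Theorem~\ref{thm:mixed}, lower to a common exponent by using the seminorm below the global distance scale and the supremum norm above it, and obtain uniqueness by applying comparison to the difference. Your explicit bound $\|w\|_{C^{\alpha',\alpha'/2}}\le 3\|w\|_{C^{\alpha_1,\alpha_1/2}}$ simply writes out the one-line remark made in the proof of Theorem~\ref{thm:inhom-principal}.
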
}
\begin{proof}
Add the homogeneous extension supplied by Lemma~\ref{lem:homogeneous-boundary} to the zero-boundary solution in Theorem~\ref{thm:mixed}. Combine their normalized estimates, as in Theorem~\ref{thm:inhom-principal}.
\end{proof}

{
\subsection{Gradient estimates for mixed data}\label{subsec:mixed-gradient}
The mixed-data estimate also controls the local energy at a positive Morrey exponent. This consequence will be used for the lower-order perturbations in Section~\ref{sec:lower-order}.

\begin{lemma}\label{lem:mixed-gradient}
Under the assumptions of Theorem~\ref{thm:mixed}, let $u$ be {the solution with zero initial and Dirichlet boundary values}. For every $X\in D\times(0,T)$ and $0<r<\delta(X)/4$,
\begin{equation}\label{eqn:mixed-gradient-growth}
 \iint_{Q(X,r)}|\nabla u|^2\,dY
 \le C(M+M')^2
 \|\delta\|_{L^\infty(D\times(0,T))}^{2(\beta-\agh)}
 r^{n+2\agh}.
\end{equation}
Equivalently,
\begin{equation}\label{eqn:mixed-gradient-norm}
 \||\nabla u|^2dX\|_{\mathsf M^{n+2\agh}(D\times(0,T))}^{1/2}
 \le C\left(\|\nu\|_{\mathsf M^{n+\beta}(D\times(0,T))}
 +\||{\bm{F}}|^2dX\|_{\mathsf M^{n+2\beta}(D\times(0,T))}^{1/2}\right).
\end{equation}
The norm on the left is defined by \eqref{eqn:drift-norm} with $\beta$ replaced by $\agh$. Constants have the same dependencies as in Theorem~\ref{thm:mixed}.
\end{lemma}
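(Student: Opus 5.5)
The plan is a Caccioppoli estimate on a doubled cylinder. The oscillation of $u$ on that cylinder is controlled by the global H\"older bound \eqref{eqn:mixed-holder}, and the forcing terms by \eqref{eqn:mixed-data}. Fix $X\in D\times(0,T)$ and $0<r<\delta(X)/4$, and write $\Delta=\|\delta\|_{L^\infty(D\times(0,T))}$. Since $Q(X,2r)$ need not be admissible, I would work instead with $Q(X,3r/2)$ and a cylinder $Q(X',2r')$ with $r'\asymp r$, chosen inside $\{\delta>\delta(X)/8\}$. Concretely, cover $Q(X,r)$ by a bounded number of cylinders $Q(Y_i,r/8)$, each with $Q(Y_i,r/4)$ admissible and contained in $Q(X,2r)\cap\{\delta\ge\delta(X)/2\}$. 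This is possible because $\delta$ is $1$-Lipschitz in $d_p$ and $\delta(X)>4r$. It then suffices to prove \eqref{eqn:mixed-gradient-growth} for the small cylinders with a doubled admissible cylinder, so I may assume $Q(X,2r)$ is admissible up to harmless constants.

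Put $v=u-u(X)$. By \eqref{eqn:mixed-holder}, $\|v\|_{L^\infty(Q(X,2r))}\le C(M+M')\Delta^{\beta-\agh}r^{\agh}$. Test the equation with $v\chi^2$, where $\chi$ is a cutoff equal to one on $Q(X,r)$, vanishing near the lower and lateral faces of $Q(X,2r)$, with $|\nabla\chi|\le C/r$ and $|\partial_t\chi|\le C/r^2$. The test is justified by the regularization described after \eqref{eqn:local-energy}, or through the smooth approximations used in Theorem~\ref{thm:mixed}. The measure term gives the contribution $\|v\|_\infty|\nu|(Q(X,2r))$, as in \eqref{eqn:local-energy}. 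For the divergence term, Young's inequality gives
\[
 \Bigl|\iint\bm F\cdot\nabla(v\chi^2)\Bigr|\le\tfrac14\iint|\nabla v|^2\chi^2+C\iint_{Q(X,2r)}|\bm F|^2+Cr^{-2}\iint_{Q(X,2r)}v^2 .
\]
Discarding the terminal energy and absorbing the gradient term, I obtain
\[
 \iint_{Q(X,r)}|\nabla u|^2\le Cr^{-2}\iint_{Q(X,2r)}v^2+C\|v\|_\infty|\nu|(Q(X,2r))+C\|\bm F\|_{L^2(Q(X,2r))}^2 .
\]

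Each term is then estimated separately. The first is at most $C(M+M')^2\Delta^{2(\beta-\agh)}r^{n+2\agh}$. The second is at most $C(M+M')\Delta^{\beta-\agh}r^{\agh}\cdot Mr^{n+\beta}$. The third is at most $CM'^2r^{n+2\beta}$. Since $r\le\delta(X)\le\Delta$ and $\agh<\beta$, we have $r^{\beta}\le\Delta^{\beta-\agh}r^{\agh}$ and $r^{2\beta}\le\Delta^{2(\beta-\agh)}r^{2\agh}$, so all three terms are bounded by the right side of \eqref{eqn:mixed-gradient-growth}. For \eqref{eqn:mixed-gradient-norm}, multiply by $\Delta^{2\agh}$, take the supremum and the square root, and substitute the least constants $M$ and $M'$ exactly as at the end of the proof of Theorem~\ref{thm:mixed}.

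The only genuine obstacle is admissibility: \eqref{eqn:mixed-data} is available only for $r<\delta/4$, while the Caccioppoli argument needs a doubled cylinder. The covering reduction in the first step handles this. The justification of the energy test for measure data is already provided in the paper.
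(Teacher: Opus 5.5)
Your proposal is correct and follows essentially the same route as the paper: a Caccioppoli test with $u-u(X)$ on a doubled cylinder, the H\"older bound \eqref{eqn:mixed-holder} to control the oscillation, a term-by-term comparison using $r\le\|\delta\|_{L^\infty(D\times(0,T))}$ and $\agh<\beta$, and a finite covering by backward cylinders of radius $r/8$ to reach all admissible radii. The only difference is the order of steps: the paper first proves the estimate for $r<\delta(X)/16$ and covers at the end, whereas you do the covering reduction first.
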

\begin{proof}
First suppose $r<\delta(X)/16$, so that the data growth assumptions apply on $Q(X,2r)$. Put $w=u-u(X)$ and choose spatial and time cutoffs supported in that cylinder, equal to one on $Q(X,r)$, with bounds $C/r$ and $C/r^2$ on their respective derivatives. Testing the equation with $w$ times the squared cutoff, absorbing the cross terms by ellipticity, and discarding the nonnegative terminal term gives
\begin{equation}\label{eqn:mixed-caccioppoli}
\begin{split}
 \iint_{Q(X,r)}|\nabla u|^2\,dY
 \le{}&Cr^{-2}\iint_{Q(X,2r)}|w|^2\,dY\\
 &+C\int_{Q(X,2r)}|w|\,d|\nu|
 +C\iint_{Q(X,2r)}|{\bm{F}}|^2\,dY.
\end{split}
\end{equation}
For the divergence term, the derivative falling on the cutoff is bounded by the first and third terms, using {Young's inequality}. The calculation is justified by local time regularization and approximation of the measure and divergence data; uniform convergence in the continuous measure pairing and weak lower semicontinuity of the energy give the displayed inequality for the solution of Theorem~\ref{thm:mixed}.

The H\"older estimate of that theorem bounds the oscillation of $u$ on $Q(X,2r)$ by
\[
 C(M+M')\|\delta\|_{L^\infty(D\times(0,T))}^{\beta-\agh}r^{\agh}.
\]
Consequently, the three terms in \eqref{eqn:mixed-caccioppoli} are bounded, respectively, by
\[
\begin{gathered}
 C(M+M')^2\|\delta\|_{L^\infty(D\times(0,T))}^{2(\beta-\agh)}r^{n+2\agh},\\
 CM(M+M')\|\delta\|_{L^\infty(D\times(0,T))}^{\beta-\agh}r^{n+\beta+\agh},
 \qquad C(M')^2r^{n+2\beta}.
\end{gathered}
\]
Since $\agh<\beta$ and $r<\|\delta\|_{L^\infty(D\times(0,T))}$, this proves \eqref{eqn:mixed-gradient-growth} in the smaller range.

For a general admissible radius, cover the spatial ball by finitely many balls of radius $r/8$ with centers in the original ball, and partition the time interval into $65$ equal intervals. The backward cylinders of radius $r/8$ at the corresponding upper endpoints cover $Q(X,r)$. At their centers $Y$, $\delta(Y)>3r$, so their radii satisfy the smaller-range condition. Summation proves the full assertion. Express the least constants {$M$ and $M'$} by \eqref{eqn:morrey-norm}--\eqref{eqn:drift-norm} to obtain \eqref{eqn:mixed-gradient-norm}.
\end{proof}

}

{
\section{Equations with lower-order terms}\label{sec:lower-order}
{
The elliptic problem with locally integrable lower-order coefficients was treated in \cite{HaraEllipticLower} by a Morrey-space estimate and the Fredholm alternative. Here the principal solution operator acts on a sum of measure and divergence sources. We estimate the perturbation on short time intervals, invert it by a Neumann series, and continue the solution in time.
}

\subsection{The equation and growth conditions}
Let $D\subset\R^n$, $n\ge2$, satisfy {\eqref{eqn:CDC}}, and let $A$ satisfy the boundedness and uniform ellipticity condition \eqref{eqn:A}. {On each finite strip, $\delta\le\sqrt T$, so the normalized spaces and estimates of Section~\ref{sec:boundary} apply.} Fix $0<\beta<1$. We consider
\begin{equation}\label{eqn:pde}
 \mathcal Hu+{\bm{b}}\cdot\nabla u+\operatorname{div}({\bm{c}} u)+u\mu
 =\nu+\operatorname{div}{\bm{F}}
 \quad\text{in }D\times(0,\infty).
\end{equation}
The fields {${\bm{b}}$, ${\bm{c}}$, and ${\bm{F}}$} are measurable; {$\mu$ and $\nu$} are locally finite signed Radon measures. All divergence operators act on the spatial variables.
For every finite $T>0$, assume the following estimates on all admissible cylinders, with finite growth constants depending on $T$:
\begin{equation}\label{eqn:growth}
\begin{aligned}
 \iint_{Q(X,r)}|{\bm{b}}|^2\,dY&\le M_{{\bm{b}},T}^2r^{n+2\beta},&
 \iint_{Q(X,r)}|{\bm{c}}|^2\,dY&\le M_{{\bm{c}},T}^2r^{n+2\beta},\\
 |\mu|(Q(X,r))&\le M_{\mu,T}r^{n+\beta},&
 |\nu|(Q(X,r))&\le M_T r^{n+\beta},\\
 \iint_{Q(X,r)}|{\bm{F}}|^2\,dY&\le (M'_T)^2r^{n+2\beta}.
\end{aligned}
\end{equation}
These growth conditions are imposed on interior cylinders, and the drift divergences in \eqref{eqn:pde} are understood in distributions.

Prescribe $u=g$ on $\partial D\times[0,\infty)$ and $u(\cdot,0)=u_0$. Assume that $u_0$ is bounded and H\"older continuous on $\overline D$, that $g$ is bounded and parabolically H\"older continuous on each finite lateral strip, and that $g(\cdot,0)=u_0$ on $\partial D$. Fix a common data exponent $0<\alpha\le1$. For $u\in V_{\loc}\cap C(\overline{D\times(0,T)})$, the weak formulation is
\begin{equation}\label{eqn:lower-weak-formulation}
\begin{aligned}
 -\iint_{D\times(0,T)}u\partial_t\varphi\,dX
 +\iint_{D\times(0,T)}A\nabla u\cdot\nabla\varphi\,dX
 &+\iint_{D\times(0,T)}({\bm{b}}\cdot\nabla u)\varphi\,dX\\
 -\iint_{D\times(0,T)}u{\bm{c}}\cdot\nabla\varphi\,dX
 +\int_{D\times(0,T)}u\varphi\,d\mu
 &=\int_{D\times(0,T)}\varphi\,d\nu
 -\iint_{D\times(0,T)}{\bm{F}}\cdot\nabla\varphi\,dX
\end{aligned}
\end{equation}
for every $\varphi\in C_c^\infty(D\times(0,T))$.

\subsection{The mixed source space}\label{subsec:mixed-source-space}
Fix a finite $T>0$. Define
\[
\begin{aligned}
 \mathsf V^{2,n+2\beta}(D\times(0,T))
 &=\{{\bm{F}}:|{\bm{F}}|^2dX\in\mathsf M^{n+2\beta}(D\times(0,T))\},\\
 \|{\bm{F}}\|_{\mathsf V^{2,n+2\beta}(D\times(0,T))}
 &=\||{\bm{F}}|^2dX\|_{\mathsf M^{n+2\beta}(D\times(0,T))}^{1/2}.
\end{aligned}
\]
This vector-field space is distinct from the local energy class $V_{\mathrm{loc}}$. Define the space of distributions
\begin{equation}\label{eqn:source}
\begin{aligned}
 \mathsf X^{n+\beta}(D\times(0,T))
 &=\mathsf M^{n+\beta}(D\times(0,T))
   +\operatorname{div}\mathsf V^{2,n+2\beta}(D\times(0,T)),\\
 \|\sigma\|_{\mathsf X^{n+\beta}(D\times(0,T))}
 &=\inf_{\sigma=\nu+\operatorname{div}{\bm{F}}}
 \bigl(\|\nu\|_{\mathsf M^{n+\beta}(D\times(0,T))}\\
 &\hspace{30mm}+\|{\bm{F}}\|_{\mathsf V^{2,n+2\beta}(D\times(0,T))}\bigr).
\end{aligned}
\end{equation}
{In space names and norms in this subsection, an omitted domain is $D\times(0,T)$.}
The equality in the infimum holds in distributions on $D\times(0,T)$.

\begin{proposition}\label{prop:mixed-source}
The spaces $\mathsf V^{2,n+2\beta}$ and $\mathsf X^{n+\beta}$ are Banach spaces. {The solution operator for zero initial and Dirichlet boundary values} ${\bm{\mathrm{G}}_0}$ is well defined, linear, and injective on $\mathsf X^{n+\beta}$, and
\begin{equation}\label{eqn:mixed-G0}
\begin{split}
 \mathcal H{\bm{\mathrm{G}}_0}\sigma&=\sigma,\\
 \|{\bm{\mathrm{G}}_0}\sigma\|_{C^{\agh,\agh/2}(\overline{D\times(0,T)})}
 +\||\nabla{\bm{\mathrm{G}}_0}\sigma|^2dX\|_{\mathsf M^{n+2\agh}(D\times(0,T))}^{1/2}
 &\le C\|\sigma\|_{\mathsf X^{n+\beta}(D\times(0,T))}.
\end{split}
\end{equation}
\end{proposition}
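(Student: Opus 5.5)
We handle the completeness assertions, then the definition and linearity of $\bm{\mathrm G}_0$, then the estimate, and finally injectivity.

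\emph{Completeness.} For $\mathsf V^{2,n+2\beta}$ we follow Lemma~\ref{lem:complete}. A Cauchy sequence $\bm F_j$ is Cauchy in $L^2$ on each compact set, by a finite admissible covering. Hence it has an $L^2_{\mathrm{loc}}$ limit $\bm F$. On each fixed admissible cylinder we let $k\to\infty$ in the bound for $\|\bm F_j-\bm F_k\|_{L^2(Q(X,r))}$, which gives norm convergence. The triangle inequality comes from the triangle inequality in $L^2(Q(X,r))$. The space $\mathsf X^{n+\beta}$ is the image of the Banach space $\mathsf M^{n+\beta}\times\mathsf V^{2,n+2\beta}$ under the bounded linear map $(\nu,\bm F)\mapsto\nu+\divergence\bm F$, equipped with the quotient norm. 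It is therefore Banach once the kernel $N$ of this map is closed. Closedness holds because norm convergence in either factor implies local convergence in total variation, respectively in $L^2$, on compacta. Consequently $\int\varphi\,d\nu_j-\iint\bm F_j\cdot\nabla\varphi\,dX$ converges for each test function $\varphi$. The norm on $\mathsf X^{n+\beta}$ is then a genuine norm, since a class of norm zero is the limit in distributions of zero.

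\emph{Definition of $\bm{\mathrm G}_0$.} Given a representation $\sigma=\nu+\divergence\bm F$, we set $\bm{\mathrm G}_0\sigma$ to be the solution of Theorem~\ref{thm:mixed} for the pair $(\nu,\bm F)$. We expect the main obstacle to be independence of the representation. Suppose $\nu+\divergence\bm F=\nu'+\divergence\bm F'$ in distributions. By linearity of the construction in Theorem~\ref{thm:mixed} (sums of solutions are solutions), the difference of the two solutions is a bounded function in $V_{\loc}\cap C(\overline{D\times(0,T)})$. It satisfies the weak formulation with right-hand side $(\nu-\nu')+\divergence(\bm F-\bm F')=0$, and has zero parabolic values. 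Lemma~\ref{lem:std-comparison} applies because the weak formulation only sees the distribution. Hence the difference vanishes, so $\bm{\mathrm G}_0$ is well defined. Linearity then follows from the same uniqueness, and $\mathcal H\bm{\mathrm G}_0\sigma=\sigma$ holds by construction.

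\emph{Estimate.} For any representation we combine \eqref{eqn:mixed-norm}, which bounds the normalized H\"older norm, with \eqref{eqn:mixed-gradient-norm}, which bounds the gradient Morrey norm. The right-hand side of both is $C(\|\nu\|_{\mathsf M^{n+\beta}}+\|\bm F\|_{\mathsf V^{2,n+2\beta}})$. Taking the infimum over representations gives \eqref{eqn:mixed-G0}. The hypothesis $\beta<1$ of Theorem~\ref{thm:mixed} is the standing one in this section. Condition \eqref{eqn:finite-global-scale} holds since $T$ is finite.

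\emph{Injectivity.} If $\bm{\mathrm G}_0\sigma=0$, then $\sigma=\mathcal H\bm{\mathrm G}_0\sigma=0$ as a distribution, so $\sigma$ is the zero element of $\mathsf X^{n+\beta}$.
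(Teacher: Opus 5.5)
Your proposal is correct and follows the paper's proof essentially step for step. You prove completeness of $\mathsf V^{2,n+2\beta}$ as in Lemma~\ref{lem:complete} and realize $\mathsf X^{n+\beta}$ as the quotient by the closed kernel of the continuous map $(\nu,\bm F)\mapsto\nu+\divergence\bm F$. You get independence of the representation from homogeneous comparison via Lemma~\ref{lem:std-comparison}, the estimate by taking the infimum in Theorem~\ref{thm:mixed} and Lemma~\ref{lem:mixed-gradient}, and linearity and injectivity from uniqueness and the equation, exactly as in the paper.
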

\begin{proof}
{
Let $(\bm F_j)$ be Cauchy in $\mathsf V^{2,n+2\beta}$. As in the proof of Lemma~\ref{lem:complete}, finite admissible coverings give a local limit, now by completeness of $L^2$ on compact subsets. Passing to this limit on each admissible cylinder and then taking the supremum gives convergence in $\mathsf V^{2,n+2\beta}$. Thus this space is complete. Lemma~\ref{lem:complete} then shows that its product with $\mathsf M^{n+\beta}$, equipped with the sum norm, is a Banach space.
}

The map $(\nu,{\bm{F}})\mapsto\nu+\operatorname{div}{\bm{F}}$ into distributions is continuous: for every compactly supported smooth $\varphi$,
\[
 \left|\int_{D\times(0,T)}\varphi\,d\nu
 -\iint_{D\times(0,T)}{\bm{F}}\cdot\nabla\varphi\,dX\right|
 \le C_\varphi\left(\|\nu\|_{\mathsf M^{n+\beta}(D\times(0,T))}+\|{\bm{F}}\|_{\mathsf V^{2,n+2\beta}(D\times(0,T))}\right).
\]
Its kernel is closed, and the corresponding Banach quotient is precisely \eqref{eqn:source}. {For any representation of $\sigma$, Theorem~\ref{thm:mixed} supplies the solution corresponding to $\sigma$.} Different representations give the same solution by homogeneous comparison. Take the infimum over representations in that theorem and Lemma~\ref{lem:mixed-gradient} to obtain \eqref{eqn:mixed-G0}. Uniqueness proves linearity; the equation proves injectivity.
\end{proof}

\subsection{Uniform smallness on short time intervals}\label{subsec:short-intervals}
For $0\le s<s+\tau\le T$, define
\[
 \delta_s(x,t)=\min\{\operatorname{dist}(x,\partial D),\sqrt{t-s}\}
 \quad (s<t<s+\tau).
\]
Define $\mathsf M^{n+\beta}(D\times(s,s+\tau))$ and
$\mathsf M^{n+2\beta}(D\times(s,s+\tau))$ by replacing $\delta$ with $\delta_s$ and $(0,T)$ with $(s,s+\tau)$ in \eqref{eqn:morrey-norm}--\eqref{eqn:drift-norm}. Define
$\mathsf V^{2,n+2\beta}(D\times(s,s+\tau))$ and
$\mathsf X^{n+\beta}(D\times(s,s+\tau))$ by the same vector-field and quotient constructions as \eqref{eqn:source}. {The H\"older norms on this shifted interval are normalized by $\|\delta_s\|_{L^\infty(D\times(s,s+\tau))}$; spatial initial traces use the same scale.}

{The corresponding principal solution operator for zero initial and Dirichlet boundary values} is denoted by ${\bm{\mathrm{G}}_0}$; it is applied after translation of $s$ to time zero, with terminal time $\tau$.

\begin{lemma}\label{lem:small}
The coefficient norms on every such interval satisfy
\begin{equation}\label{eqn:smallnorm}
 \|{\bm{b}}\|_{\mathsf V^{2,n+2\beta}(D\times(s,s+\tau))}
 +\|{\bm{c}}\|_{\mathsf V^{2,n+2\beta}(D\times(s,s+\tau))}
 +\|\mu\|_{\mathsf M^{n+\beta}(D\times(s,s+\tau))}
 \le \tau^{\beta/2}
 (M_{{\bm{b}},T}+M_{{\bm{c}},T}+M_{\mu,T}).
\end{equation}
The principal estimate and its gradient consequence have constants independent of {$s$ and $\tau$} and $\operatorname{diam}D$.
\end{lemma}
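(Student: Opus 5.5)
The smallness bound is a direct comparison of the normalized norms, and I would verify it cylinder by cylinder. On $D\times(s,s+\tau)$ the shifted distance satisfies $\delta_s\le\sqrt\tau$, so $\|\delta_s\|_{L^\infty(D\times(s,s+\tau))}\le\sqrt\tau$.

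Next, every cylinder admissible for the shifted problem is admissible for the original one. Take $X=(x,t)$ with $s<t<s+\tau$ and $0<r<\delta_s(X)/4$. Since $t-s\le t$, we have $\delta_s(X)\le\delta(X)$, so $r<\delta(X)/4$. Also $r^2<(t-s)/16$, so $Q(X,r)\subset D\times(s,s+\tau)\subset D\times(0,T)$. Hence the growth bounds \eqref{eqn:growth} on the strip $(0,T)$ apply to $Q(X,r)$.

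For $\bm b$ this gives
\[
 \|\delta_s\|_{L^\infty}^{2\beta}\,r^{-n-2\beta}\iint_{Q(X,r)}|\bm b|^2\,dY\le \tau^{\beta}M_{\bm b,T}^2 .
\]
Taking the supremum over admissible cylinders and the square root bounds the $\mathsf V^{2,n+2\beta}$ norm of $\bm b$ by $\tau^{\beta/2}M_{\bm b,T}$. The same argument gives the bound for $\bm c$. For $\mu$ it gives $\tau^{\beta/2}M_{\mu,T}$, because the $\mathsf M^{n+\beta}$ norm carries the weight $\|\delta_s\|^{\beta}\le\tau^{\beta/2}$. Summing the three bounds proves \eqref{eqn:smallnorm}.

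For the uniformity claim, translate time by $s$. After this shift, $\delta_s$ becomes the function $\delta$ of the cylinder $D\times(0,\tau)$, and the shifted spaces and H\"older norms become exactly those of Section~\ref{sec:boundary} on that cylinder. Theorem~\ref{thm:mixed}, Lemma~\ref{lem:mixed-gradient}, and Proposition~\ref{prop:mixed-source} apply there, and their constants depend only on $n$, $L$, $\gamma$, $\beta$ and the chosen exponents. The coefficients $A$ are translated as well but still satisfy \eqref{eqn:A}, so nothing changes. These results explicitly state that their constants are independent of $T$ and $\operatorname{diam}D$, and their normalization absorbs the scale $\|\delta\|_{L^\infty}$. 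Hence the constants are independent of $s$, $\tau$ and $\operatorname{diam}D$.

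The only point needing care is this uniformity, since the constants in Theorem~\ref{thm:mixed} must not depend on the time horizon. I would check that its proof tracks powers of $\|\delta\|_{L^\infty}$ only through the scale-invariant combinations displayed there. This holds, because every step uses the scaling conversion $R^\beta\le\|\delta\|^{\beta-\ass}R^{\ass}$.
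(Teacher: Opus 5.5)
Your proof is correct and follows the paper's argument. The comparison $\delta_s\le\delta$ makes every shifted admissible cylinder admissible for \eqref{eqn:growth}, the bound $\|\delta_s\|_{L^\infty(D\times(s,s+\tau))}\le\sqrt\tau$ supplies the factor $\tau^{\beta/2}$, and uniformity follows because time translation reduces the problem to Theorem~\ref{thm:mixed} and Lemma~\ref{lem:mixed-gradient}, whose constants are independent of $T$ and $\operatorname{diam}D$; the paper phrases this last step as invariance of those constants under time translation and parabolic rescaling.
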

\begin{proof}
Since $\delta_s\le\delta$, every cylinder admissible for the shifted problem is admissible for \eqref{eqn:growth}. Also
$\|\delta_s\|_{L^\infty(D\times(s,s+\tau))}\le\sqrt\tau$.
For example,
\[
 \|{\bm{b}}\|_{\mathsf V^{2,n+2\beta}(D\times(s,s+\tau))}
 =\|\delta_s\|_{L^\infty(D\times(s,s+\tau))}^{\beta}
 \sup_{\substack{X\in D\times(s,s+\tau)\\0<r<\delta_s(X)/4}}
 r^{-n/2-\beta}
 \left(\iint_{Q(X,r)}|{\bm{b}}|^2\,dY\right)^{1/2}
 \le\tau^{\beta/2}M_{{\bm{b}},T}.
\]
The other bounds follow identically. The shifted principal problem has finite global distance scale, even when $D$ is unbounded. The structural constants in Theorem~\ref{thm:mixed} and Lemma~\ref{lem:mixed-gradient} are unchanged by time translation and parabolic rescaling.
\end{proof}

{The operator norms below are those of maps from $\mathsf X^{n+\beta}(D\times(s,s+\tau))$ into itself.}

Define on the shifted source space
\begin{equation}\label{eqn:operator}
 \mathcal T_s\sigma=-({\bm{b}}\cdot\nabla{\bm{\mathrm{G}}_0}\sigma)dX
 -\operatorname{div}({\bm{c}}\,{\bm{\mathrm{G}}_0}\sigma)
 -({\bm{\mathrm{G}}_0}\sigma)\mu.
\end{equation}
{
Put $u=\bm{\mathrm{G}}_0\sigma$. Apply the gradient term of \eqref{eqn:mixed-G0} after time translation. Since $r<\delta_s(X)/4\le\|\delta_s\|_{L^\infty(D\times(s,s+\tau))}$, its normalized Morrey bound implies
\[
 \|\nabla u\|_{L^2(Q(X,r))}
 \le C\|\sigma\|_{\mathsf X^{n+\beta}(D\times(s,s+\tau))}r^{n/2}
\]
on shifted admissible cylinders. 

The Cauchy--Schwarz inequality now gives
\[
 \int_{Q(X,r)}|\bm{b}\cdot\nabla u|\,dY
 \le CM_{\bm{b},T}\|\sigma\|_{\mathsf X^{n+\beta}(D\times(s,s+\tau))}r^{n+\beta}.
\]
Multiplying by the normalization factor yields
\[
 \|(\bm{b}\cdot\nabla u)dX\|_{\mathsf M^{n+\beta}(D\times(s,s+\tau))}
 \le C\tau^{\beta/2}M_{\bm{b},T}
 \|\sigma\|_{\mathsf X^{n+\beta}(D\times(s,s+\tau))}.
\]
For the other two terms, multiplication by a bounded function gives
\[
\begin{aligned}
 \|\bm{c}u\|_{\mathsf V^{2,n+2\beta}(D\times(s,s+\tau))}
 &\le\|u\|_{L^\infty(D\times(s,s+\tau))}
 \|\bm{c}\|_{\mathsf V^{2,n+2\beta}(D\times(s,s+\tau))},\\
 \|u\mu\|_{\mathsf M^{n+\beta}(D\times(s,s+\tau))}
 &\le\|u\|_{L^\infty(D\times(s,s+\tau))}
 \|\mu\|_{\mathsf M^{n+\beta}(D\times(s,s+\tau))}.
\end{aligned}
\]
Therefore \eqref{eqn:mixed-G0} and Lemma~\ref{lem:small}, followed by the definition of the quotient norm, give
}
\begin{equation}\label{eqn:opnorm}
 \|\mathcal T_s\|_{\mathsf X^{n+\beta}(D\times(s,s+\tau))}\le C\tau^{\beta/2}
 (M_{{\bm{b}},T}+M_{{\bm{c}},T}+M_{\mu,T}).
\end{equation}
Choose a length $\tau_T>0$ so that the right-hand side is at most $1/2$ for every $\tau\le\tau_T$. If all three constants vanish, any length is allowed. Then
\begin{equation}\label{eqn:inverse}
 (I-\mathcal T_s)^{-1}=\sum_{j=0}^{\infty}\mathcal T_s^j,
 \qquad \|(I-\mathcal T_s)^{-1}\|_{\mathsf X^{n+\beta}(D\times(s,s+\tau))}\le2.
\end{equation}

\subsection{Initial values and the solution class}
\begin{proposition}\label{prop:local}
{
Fix $0<\tau\le\tau_T$ and $0\le s<s+\tau\le T$. Choose $\agh>0$ below $\alpha$, $\beta$, and the structural interior and boundary decay exponents. For compatible bounded data $u_s\in C^\agh(\overline D)$ and $g\in C^{\agh,\agh/2}(\partial D\times[s,s+\tau])$, equation \eqref{eqn:pde} has a unique bounded continuous weak solution in $V_{\loc}(D\times(s,s+\tau))$, with initial value $u_s$ at time $s$ and lateral value $g$. It satisfies
\begin{equation}\label{eqn:short-time-holder}
\begin{split}
 \|u\|_{C^{\agh,\agh/2}(\overline D\times[s,s+\tau])}
 \le C\bigl(&\|u_s\|_{C^\agh(\overline D)}
 +\|g\|_{C^{\agh,\agh/2}(\partial D\times[s,s+\tau])}\\
 &+\|\nu+\divergence\bm{F}\|_{\mathsf X^{n+\beta}(D\times(s,s+\tau))}\bigr).
\end{split}
\end{equation}
All three H\"older norms use the scale $\|\delta_s\|_{L^\infty(D\times(s,s+\tau))}$, including the spatial initial norm. The constant depends only on {$n$, $L$, $\gamma$, $\beta$, and the fixed $\agh$}; the coefficient norms enter only through the choice of $\tau_T$ in \eqref{eqn:opnorm}. The same exponent is retained on successive intervals.
}
\end{proposition}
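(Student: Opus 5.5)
We reduce to zero boundary values and then invert the perturbation by the Neumann series \eqref{eqn:inverse}. After translating $s$ to time zero, let $h$ be the homogeneous solution of Lemma~\ref{lem:homogeneous-boundary} with initial value $u_s$ and lateral value $g$, both taken with exponent $\agh$. By \eqref{eqn:h-holder}, $h$ is bounded by the first two terms of \eqref{eqn:short-time-holder} in the normalized $C^{\agh,\agh/2}$ norm. We look for $u=h+\bm{\mathrm{G}}_0\sigma$ with $\sigma\in\mathsf X^{n+\beta}(D\times(s,s+\tau))$. Substituting this ansatz into \eqref{eqn:pde} and using $\mathcal H\bm{\mathrm{G}}_0\sigma=\sigma$ from \eqref{eqn:mixed-G0} reduces the problem to
\[
 \sigma-\mathcal T_s\sigma=\nu+\divergence\bm{F}+\sigma_h,\qquad
 \sigma_h:=-(\bm{b}\cdot\nabla h)\,dX-\divergence(\bm{c}h)-h\mu .
\]

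The first step is to show that $\sigma_h\in\mathsf X^{n+\beta}$ with
\[
 \|\sigma_h\|_{\mathsf X^{n+\beta}}\le C\|h\|_{C^{\agh,\agh/2}} .
\]
The terms $\bm{c}h$ and $h\mu$ are handled exactly as in the estimate of $\mathcal T_s$: each is bounded by $\|h\|_{L^\infty}$ times the coefficient norm from Lemma~\ref{lem:small}, which is at most a constant, since $\tau\le\tau_T$ makes $C\tau^{\beta/2}(M_{\bm b,T}+M_{\bm c,T}+M_{\mu,T})\le1/2$. The drift term $\bm{b}\cdot\nabla h$ needs a Morrey bound on $|\nabla h|^2$. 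We obtain it by repeating the proof of Lemma~\ref{lem:mixed-gradient} with zero data. The Caccioppoli inequality \eqref{eqn:mixed-caccioppoli} on admissible cylinders, combined with the Hölder oscillation bound for $h$, gives
\[
 \iint_{Q(X,r)}|\nabla h|^2\,dY\le C\,[h]_{C^{\agh,\agh/2}}^2\,r^{n+2\agh}\le C\|\delta_s\|_{L^\infty}^{-2\agh}\|h\|^2_{C^{\agh,\agh/2}}\,r^{n+2\agh}.
\]
The only place this needs care is that the energy estimate is purely interior, and all the Morrey norms use only interior cylinders, so it applies. Cauchy--Schwarz then gives
\[
 \int_{Q(X,r)}|\bm b\cdot\nabla h|\,dY\le M_{\bm b,T}\,r^{n+\beta}\,(r/\|\delta_s\|_\infty)^{\agh}\|h\|,
\]
and multiplying by the normalization $\|\delta_s\|^{\beta}_{L^\infty}\le\tau^{\beta/2}$ yields a bound by $C\|h\|$.

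Next we set $\sigma=(I-\mathcal T_s)^{-1}(\nu+\divergence\bm F+\sigma_h)$. By \eqref{eqn:inverse} its norm is at most $2(\|\nu+\divergence\bm F\|_{\mathsf X^{n+\beta}}+C\|h\|)$. We then put $u=h+\bm{\mathrm{G}}_0\sigma$. To verify that $u$ satisfies \eqref{eqn:lower-weak-formulation}, we unwind the definitions: $\mathcal H u=\sigma=\nu+\divergence\bm F+\mathcal T_s\sigma+\sigma_h$, and $\mathcal T_s\sigma+\sigma_h$ is exactly minus the lower-order terms applied to $u$. Continuity of $u$ and $u\in V_{\loc}$ follow from the corresponding properties of $h$ and $\bm{\mathrm{G}}_0\sigma$. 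Estimate \eqref{eqn:short-time-holder} follows from \eqref{eqn:mixed-G0}, provided the exponent $\agh$ of Proposition~\ref{prop:mixed-source} is the same fixed exponent; we arrange this by choosing $\agh$ below all the structural exponents from the start, and lowering the exponent is harmless as noted in Theorem~\ref{thm:inhom-principal}. Since $\tau_T$ does not depend on $s$ and the constants are independent of $s$ and $\tau$ by Lemma~\ref{lem:small}, the same exponent is retained on successive intervals.

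The main obstacle is uniqueness within the stated class. Suppose $w$ is a bounded continuous weak solution in $V_{\loc}$ with zero data. Then $\mathcal Hw=\rho:=-(\bm b\cdot\nabla w)\,dX-\divergence(\bm cw)-w\mu$. To write $w=\bm{\mathrm{G}}_0\rho$ and conclude $\rho=\mathcal T_s\rho$, hence $\rho=0$ and $w=0$ by invertibility, we must know that $\rho\in\mathsf X^{n+\beta}$. The measure and $\bm c$ parts are fine by boundedness of $w$. The drift part requires a Morrey bound on $|\nabla w|^2$, which a priori is only locally $L^2$. I would first use local energy to bound $\iint_{Q(X,r)}|\nabla w|^2\le C\|w\|_\infty^2r^n$, but this has exponent $0$ rather than $2\agh$. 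To gain the needed exponent, the plan is to bootstrap: on small interior cylinders, Lemma~\ref{lem:interior-forced} applied to $\mathcal Hw=\rho$, with $\bm b\cdot\nabla w$ treated as a small-radius Morrey datum via Cauchy--Schwarz and $M_{\bm b,T}r^{\beta}$, gives Hölder continuity of $w$ and hence Morrey gradient growth. If this bootstrap fails, the fallback is to interpret uniqueness within the class of solutions of the form $h+\bm{\mathrm{G}}_0\sigma$ with $\sigma\in\mathsf X^{n+\beta}$, for which injectivity of $I-\mathcal T_s$ and of $\bm{\mathrm{G}}_0$ from Proposition~\ref{prop:mixed-source} suffices directly.
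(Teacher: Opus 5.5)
Your overall route is the paper's: homogeneous extension $h$, Neumann inversion of $I-\mathcal T_s$ on the corrected source, and the representation $w=\bm{\mathrm{G}}_0(\mathcal Hw)$ for uniqueness. The step that fails as written is the bound for $h$. Lemma~\ref{lem:homogeneous-boundary}, applied with data exponent $\agh$, does not give \eqref{eqn:h-holder} with the same exponent on the left. Its proof takes the output exponent strictly below the data exponent (and below $\kappa$), so citing it yields a strictly smaller exponent. That exponent would then drop again at every interface in Theorem~\ref{thm:continuation}. Retaining it is exactly the last assertion of the proposition. Your justification (independence of $\tau_T$ and of the constants from $s$) does not address this. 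Neither does the remark on lowering exponents in Theorem~\ref{thm:inhom-principal}, since in \eqref{eqn:short-time-holder} the exponent on the left must equal the one on the right. The missing observation, which the paper makes, is that the boundary iteration needs only $\agh<\kappa$. If $C\theta^\kappa\le\frac12\theta^{\agh}$ and the data have exponent $\agh$, then $a_k=\Omega_P(\theta^kR)/(\theta^kR)^{\agh}$ satisfies $a_{k+1}\le\frac12a_k+C\theta^{-\agh}[\widetilde g]_{C^{\agh,\agh/2}}$. This stays bounded, so the exponent is retained. The interior estimate and the two-point splitting then bound $h$ in $C^{\agh,\agh/2}$ by the $C^{\agh}$ and $C^{\agh,\agh/2}$ data norms.

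For uniqueness, the obstacle you describe does not exist, and your argument is left incomplete because of it. The exponent-zero bound $\iint_{Q(X,r)}|\nabla w|^2\,dY\le C_wr^n$ from the cutoff test \eqref{eqn:energy} already suffices. (The lower-order terms there are controlled by their growth on admissible cylinders, using $r<\|\delta_s\|_{L^\infty}$.) It suffices because the drift coefficient carries the decay: by Cauchy--Schwarz, $\int_{Q(X,r)}|\bm b\cdot\nabla w|\,dY\le M_{\bm b,T}r^{n/2+\beta}\,C_w^{1/2}r^{n/2}$, so $(\bm b\cdot\nabla w)\,dX\in\mathsf M^{n+\beta}$. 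Your own existence estimate shows the same thing, since the factor $(r/\|\delta_s\|_{L^\infty})^{\agh}$ there is simply at most one. Hence $\mathcal Hw\in\mathsf X^{n+\beta}$, and Lemma~\ref{lem:std-comparison} gives $w=\bm{\mathrm{G}}_0(\mathcal Hw)$. Then $\mathcal Hw=\mathcal T_s(\mathcal Hw)$, so $\mathcal Hw=0$ by \eqref{eqn:inverse} and $w=0$. Your proposed bootstrap is unnecessary and is not carried out. The fallback would establish uniqueness only within $h+\bm{\mathrm{G}}_0\mathsf X^{n+\beta}$, not in the class of bounded continuous $V_{\loc}$ solutions that the proposition asserts.
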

\begin{proof}
Let $h$ be the homogeneous principal extension of the initial and lateral values, supplied by Lemma~\ref{lem:homogeneous-boundary} after translation. {
Choose $\agh$ below the fixed homogeneous interior and boundary exponents and the original data exponent. In the boundary iteration from Lemma~\ref{lem:homogeneous-boundary}, data of exponent $\agh$ give
\[
 \Omega_P(\theta r)\le\tfrac12\theta^\agh\Omega_P(r)
 +C[\widetilde g]_{C^{\agh,\agh/2}(\partial_p(D\times(s,s+\tau)))}r^\agh.
\]
After division by $(\theta r)^\agh$, the homogeneous factor is $1/2$. Summing its geometric series retains the exponent $\agh$, rather than decreasing it at each interface. The interior estimate and the two-point argument then give
\[
 \|h\|_{C^{\agh,\agh/2}(\overline D\times[s,s+\tau])}
 \le C\left(\|u_s\|_{C^\agh(\overline D)}
 +\|g\|_{C^{\agh,\agh/2}(\partial D\times[s,s+\tau])}\right),
\]
with all norms normalized at the shifted interval scale.
} Its local Caccioppoli inequality gives
\[
 \iint_{Q(X,r)}|\nabla h|^2\,dY\le C\|h\|_{L^\infty(D\times(s,s+\tau))}^2r^n
\]
on every shifted admissible cylinder, first on smaller cylinders and then by a finite covering. Thus
\[
 \|({\bm{b}}\cdot\nabla h)dX\|_{\mathsf M^{n+\beta}(D\times(s,s+\tau))}
 \le C\|{\bm{b}}\|_{\mathsf V^{2,n+2\beta}(D\times(s,s+\tau))}\|h\|_{L^\infty(D\times(s,s+\tau))}.
\]
{The terms $\operatorname{div}(\bm{c}h)$ and $h\mu$ satisfy the corresponding product bounds.} Consequently the corrected source
\[
 \nu+\operatorname{div}{\bm{F}}-({\bm{b}}\cdot\nabla h)dX
 -\operatorname{div}({\bm{c}} h)-h\mu
\]
belongs to $\mathsf X^{n+\beta}(D\times(s,s+\tau))$. Apply \eqref{eqn:inverse} to it, then apply ${\bm{\mathrm{G}}_0}$ and add $h$. The principal estimate, the extension estimate, and the bound $2$ in \eqref{eqn:inverse} {
give \eqref{eqn:short-time-holder}, since the sum of the shifted coefficient norms is bounded by a structural constant through \eqref{eqn:opnorm}.
}

For uniqueness in the full stated class, it remains to check representation of an arbitrary bounded continuous weak solution with zero parabolic values. A cutoff test with $u$ gives
\begin{equation}\label{eqn:energy}
\begin{split}
 \iint_{Q(X,r)}|\nabla u|^2\,dY\le{}&C\|u\|_{L^\infty(D\times(s,s+\tau))}^2r^n
 +C\|u\|_{L^\infty(D\times(s,s+\tau))}|\nu|(Q(X,2r))\\
 &+C\|u\|_{L^\infty(D\times(s,s+\tau))}^2|\mu|(Q(X,2r))
 +C\iint_{Q(X,2r)}|{\bm{F}}|^2\,dY\\
 &+C\|u\|_{L^\infty(D\times(s,s+\tau))}^2
 \iint_{Q(X,2r)}(|{\bm{b}}|^2+|{\bm{c}}|^2)\,dY
\end{split}
\end{equation}
for $r<\delta_s(X)/16$. The inner drift is absorbed by ellipticity and {Young's inequality}; the divergence field is ${\bm{F}}-{\bm{c}} u$. Local regularization justifies the continuous measure pairing. At the normalized interval scale the right-hand side is at most $C_u r^n$. A finite covering extends this estimate to all admissible radii. The Cauchy--Schwarz inequality now gives
$({\bm{b}}\cdot\nabla u)dX\in\mathsf M^{n+\beta}(D\times(s,s+\tau))$.
Hence $\mathcal Hu\in\mathsf X^{n+\beta}(D\times(s,s+\tau))$, and principal uniqueness gives $u={\bm{\mathrm{G}}_0}(\mathcal Hu)$. Applying this to a homogeneous difference identifies it with a kernel element of $I-\mathcal T_s$, which vanishes by \eqref{eqn:inverse}.
\end{proof}

\subsection{Continuation and finite-time estimates}
\begin{theorem}\label{thm:continuation}
Under the hypotheses of Section~\ref{sec:lower-order}, equation \eqref{eqn:pde} has a unique weak solution which belongs to $V_{\mathrm{loc}}$, is continuous on $\overline D\times[0,\infty)$, and is bounded on every finite strip. For a fixed sufficiently small $\agh>0$ as in Proposition~\ref{prop:local}, it is H\"older continuous on every finite strip, with
\begin{equation}\label{eqn:finite-estimate}
\begin{split}
 &\|u\|_{C^{\agh,\agh/2}(\overline D\times[0,T])}\\
 &\quad\le C_T\bigl(M_T+M'_T
 +\|u_0\|_{C^\alpha(\overline D)}
 +\|g\|_{C^{\alpha,\alpha/2}(\partial D\times[0,T])}\bigr).
\end{split}
\end{equation}
Here all H\"older norms are normalized as in \eqref{eqn:holder-norm}. The constant may depend on $T$, the coefficient growth constants on that strip, and the structural parameters.
\end{theorem}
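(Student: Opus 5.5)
The plan is to fix a finite $T$ and build the solution on $D\times(0,T)$ by gluing finitely many short-time solutions from Proposition~\ref{prop:local}. Let $\tau=\tau_T$ be the length chosen after \eqref{eqn:opnorm} for the growth constants on $(0,T)$, and set $s_k=k\tau$ for $k=0,\dots,N$, with $N\tau\ge T$ and the last interval truncated at $T$. Since $\delta_{s_k}\le\delta$, the growth bounds \eqref{eqn:growth} remain valid on every shifted interval, so Lemma~\ref{lem:small} applies uniformly in $k$. On $[0,s_1]$ I would apply Proposition~\ref{prop:local} with initial datum $u_0$ and lateral datum $g$. Because the normalized spatial norm at scale $\sqrt\tau$ is bounded by the one at scale $\|\delta\|_{L^\infty}$, the data lie in $C^{\agh}$ with norms controlled by $C_T$ times their $C^\alpha$ norms. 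Inductively, on $[s_k,s_{k+1}]$ I would take as initial datum $u_{s_k}=u(\cdot,s_k)$, the terminal trace of the previous piece. It is bounded and in $C^{\agh}(\overline D)$, by restricting \eqref{eqn:short-time-holder} to the time slice $t=s_k$. It is also compatible with $g(\cdot,s_k)$ on $\partial D$, because the previous piece is continuous up to the closure and equals $g$ laterally.

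The glued function $u$ is continuous on $\overline D\times[0,T]$, since the pieces agree at each $s_k$. It is H\"older continuous with exponent $\agh$: for $X,Y$ in adjacent or distant slabs, I would insert intermediate points on the interface slices and use the triangle inequality, with at most $N+1$ terms. Differences above scale $\sqrt\tau$ are handled by the supremum bound. Chaining the estimates gives
\[
 \|u_{s_{k+1}}\|_{C^{\agh}}\le C\bigl(\|u_{s_k}\|_{C^{\agh}}+\|g\|+\|\nu+\divergence\bm F\|_{\mathsf X}\bigr),
\]
hence a bound of the form $C^N$, which is admissible as $C_T$. The source norm on each slab is at most $\tau^{\beta/2}(M_T+M'_T)$, by the same normalization as in Lemma~\ref{lem:small}. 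Converting the final normalization from $\sqrt\tau$ to the global scale $\sqrt T$ only costs a factor depending on $T/\tau$.

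The main obstacle is verifying that the glued function is a weak solution across the interfaces $t=s_k$. For a test function $\varphi\in C_c^\infty(D\times(0,T))$, I would use a partition in time. Alternatively, I would test each slab's equation with $\varphi$ times a cutoff $\zeta_\epsilon(t)$ approximating $\bm 1_{(s_k,s_{k+1})}$. The terms with $\partial_t\zeta_\epsilon$ produce $\int_D u(x,s_{k+1})\varphi\,dx-\int_D u(x,s_k)\varphi\,dx$ in the limit, by continuity of $u$. These boundary contributions telescope because the traces coincide. The remaining terms converge by dominated convergence, because the measures $\nu,\mu$ put no mass on the slices $\{t=s_k\}$. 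That holds since $|\nu|(Q(X,r))\le Mr^{n+\beta}$ and a slice portion is covered by $O(r^{-n})$ cylinders of height $r^2$, giving mass $O(r^\beta)\to0$. The local energy bound \eqref{eqn:energy} on each slab, together with covering, gives $u\in V_{\loc}$ across interfaces.

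For uniqueness, I would take two solutions with the stated properties, restrict them to $[0,s_1]$, and apply the uniqueness part of Proposition~\ref{prop:local}. Then I would induct on $k$, using that the solutions agree at $s_k$ and are continuous there. The restriction of a weak solution on $(0,T)$ to a subslab is a weak solution with the traces as data. Finally, solutions for $T<T'$ agree on $(0,T)$ by uniqueness, which defines $u$ on $D\times(0,\infty)$.
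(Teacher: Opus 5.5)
Your strategy is the same as the paper's. You glue short-time solutions from Proposition~\ref{prop:local} on slabs of length at most $\tau_T$, using each terminal trace as the next initial datum. You show that the measures charge no time slice, telescope the interface terms, chain the estimates into a $C^N$ bound, and prove uniqueness slab by slab. One step fails as you state it, however: the claim that \eqref{eqn:energy} on each slab, together with covering, puts the glued function in $V_{\loc}$ across an interface $t=s_k$.

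After covering, \eqref{eqn:energy} only gives $\iint_{Q(X,r)}|\nabla u|^2\,dY\le C_ur^n$ on cylinders admissible for the slab $(s_k,s_{k+1})$. Near the lower face this forces $r<\sqrt{t-s_k}/4$. The bound is scale invariant, so it cannot reach the face. Covering the layer $K\times(s_k+2^{-m-1}a,\,s_k+2^{-m}a)$ takes about $r_m^{-n}$ cylinders with $r_m\asymp 2^{-m/2}\sqrt a$. Each dyadic layer therefore contributes $O(C_u|K|)$, and the sum over $m$ diverges. (For instance, $\sin(x_1/\sqrt t)$ satisfies such a bound on admissible cylinders yet has infinite energy near $t=0$.) The missing bound is exactly what you need in three places: for the glued $u$ to lie in the solution class, and for dominated convergence of $\iint A\nabla u\cdot\nabla\varphi\,\zeta_\epsilon$ and $\iint(\bm b\cdot\nabla u)\varphi\,\zeta_\epsilon$ at the lower endpoint of each slab. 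So your interface argument is incomplete. (On the upper side of the interface your covering does work. Admissibility for the slab $(s_{k-1},s_k)$ is measured from $s_{k-1}$, and you can take upper times increasing to $s_k$.)

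The repair is to use the initial datum at $s_k$. The paper tests the slab equation on $U\times(s_k,s_k+a)$ with $u\eta^2$, where $\eta$ is a purely spatial cutoff equal to one on $K\Subset U\Subset D$. It keeps the initial term $\int_U u(x,s_k)^2\eta^2\,dx$, justified by approximating the trace and lower semicontinuity, and absorbs the drift and field terms as in \eqref{eqn:energy}. This gives $\int_{s_k}^{s_k+a}\int_K|\nabla u|^2\,dx\,dt<\infty$ directly.

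Alternatively, you can keep your covering but subtract $u(X)$ in the Caccioppoli inequality. Subtracting a constant only adds Morrey-type forcing from $\divergence\bm c$ and $\mu$. Then use \eqref{eqn:short-time-holder}, which holds up to $t=s_k$. Each admissible cylinder now contributes $O(r^{n+2\agh}+r^{n+\beta})$, and the dyadic sum converges.
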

\begin{proof}
Fix $T<\infty$. Choose an integer $N$ such that $T/N\le\tau_T$ and partition $[0,T]$ into $N$ equal intervals. Solve on the first interval by Proposition~\ref{prop:local}. Its terminal trace is bounded and $C^\agh$ in space and agrees with $g$ on the spatial boundary. Use this trace as the next initial value and repeat. The same positive exponent $\agh$ is used at every step.

{
Fix an interface time $t_j$ and a compact spatial set $K\Subset D$. For $r$ sufficiently small, cover $K$ by at most $C_Kr^{-n}$ balls of radius $r$. The backward cylinders centered at time $t_j+r^2/2$ contain the slice at $t_j$ and are admissible. Hence \eqref{eqn:growth} gives
\[
 (|\mu|+|\nu|)(K\times\{t_j\})
 \le C_K(M_{\mu,T}+M_T)r^{-n}r^{n+\beta}
 \longrightarrow0.
\]
Thus restricting the measures to the open subintervals loses no mass.

Choose $K\Subset U\Subset D$ and a spatial cutoff supported in $U$ and equal to one on $K$. On the interval to the right of $t_j$, the energy test retains the initial trace and gives, for sufficiently small $a>0$,
\[
\begin{split}
 \int_{t_j}^{t_j+a}\int_K|\nabla u|^2\,dx\,dt
 \le C\biggl(&\|u(\cdot,t_j)\|_{L^2(U)}^2
 +\iint_{U\times(t_j,t_j+a)}|u|^2\,dX\\
 &+\|u\|_{L^\infty(U\times(t_j,t_j+a))}|\nu|(U\times(t_j,t_j+a))\\
 &+\|u\|_{L^\infty(U\times(t_j,t_j+a))}^2|\mu|(U\times(t_j,t_j+a))\\
 &+\|u\|_{L^\infty(U\times(t_j,t_j+a))}^2
 \iint_{U\times(t_j,t_j+a)}(|\bm{b}|^2+|\bm{c}|^2)\,dX\\
 &+\iint_{U\times(t_j,t_j+a)}|\bm{F}|^2\,dX\biggr)<\infty.
\end{split}
\]
Here $C$ also depends on the spatial cutoff. The drift and field terms are absorbed by ellipticity and {Young's inequality} as in \eqref{eqn:energy}; approximation of the initial trace and lower semicontinuity justify the test. The corresponding left-hand energy bound follows by retaining the terminal term and taking limits from below. Therefore the concatenation belongs to $V_{\loc}$ across $t_j$.

Finally, integrate the weak equation on the two adjacent intervals, using time cutoffs approaching $t_j$. Their interface contributions sum to
\[
 \int_D\bigl(u(x,t_j^-)-u(x,t_j^+)\bigr)\varphi(x,t_j)\,dx=0,
\]
since the terminal and initial traces agree continuously. The local energy bounds justify the cutoff limits, and the absence of time-slice atoms permits addition of the measure terms. Hence the concatenated function satisfies the weak formulation on the whole strip.
}

{
Iterating \eqref{eqn:short-time-holder}
} gives a finite bound depending on $N$ and the displayed parameters. Adjacent-interval H\"older estimates combine by inserting the common interface value at a fixed spatial point. Iterating this observation over the finite partition proves \eqref{eqn:finite-estimate}.

The same interval argument proves uniqueness among solutions bounded on each finite strip. Solutions constructed for different terminal times agree on their common strip by this uniqueness. Their union defines the solution on $[0,\infty)$ and retains every finite-time estimate.
\end{proof}

\subsection{Quantitative control under continuation}
All H\"older norms in the following proposition are normalized by
$\|\delta\|_{L^\infty(D\times(0,T))}$ as in \eqref{eqn:holder-norm}, including the initial and lateral data norms.
\begin{proposition}\label{prop:quantitative}
The solution in Theorem~\ref{thm:continuation} satisfies
\begin{equation}\label{eqn:exponential}
\begin{split}
 \|u\|_{C^{\agh,\agh/2}(\overline D\times[0,T])}
 \le{}& C\exp\bigl(CT (M_{{\bm{b}},T}+M_{{\bm{c}},T}+M_{\mu,T})^{2/\beta}\bigr)\\
 &\times\bigl(\|u_0\|_{C^\alpha(\overline D)}
 +\|g\|_{C^{\alpha,\alpha/2}(\partial D\times[0,T])}\\
 &\hspace{25mm}+T^{\beta/2}(M_T+M'_T)\bigr).
\end{split}
\end{equation}
The constant $C$ depends only on the structural parameters and the fixed exponents.
\end{proposition}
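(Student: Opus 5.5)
We retrace the continuation of Theorem~\ref{thm:continuation} and track the dependence of each step on $T$ and on the coefficient constants. Put $K_T=M_{{\bm{b}},T}+M_{{\bm{c}},T}+M_{\mu,T}$. By \eqref{eqn:opnorm}, we may take $\tau_T=c_0K_T^{-2/\beta}$ with a structural $c_0$; if $K_T=0$, take $\tau_T=T$. Choose $N=\lceil T/\tau_T\rceil\le 1+C T K_T^{2/\beta}$ and partition $[0,T]$ into $N$ intervals $[t_{j-1},t_j]$ of length $\tau=T/N\le\tau_T$. Proposition~\ref{prop:local} applies on each interval with a structural constant $C_1$. This constant is independent of $s$, $\tau$, and the coefficients, because the coefficients enter only through $\tau_T$.

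\textbf{Step 1: one-step recursion in supremum norms.} On $[t_{j-1},t_j]$, \eqref{eqn:short-time-holder} bounds the shifted normalized norm by
\[
 C_1\bigl(\|u(\cdot,t_{j-1})\|_{C^{\agh}}+\|g\|_{C^{\agh,\agh/2}}+\|\nu+\divergence\bm F\|_{\mathsf X^{n+\beta}(D\times(t_{j-1},t_j))}\bigr).
\]
As in Lemma~\ref{lem:small}, the source term is at most $\tau^{\beta/2}(M_T+M'_T)\le T^{\beta/2}(M_T+M'_T)$. The principal difficulty is that the shifted H\"older norms are normalized by $\|\delta_s\|_{L^\infty}\le\sqrt\tau$, whereas the target norm uses $\|\delta\|_{L^\infty(D\times(0,T))}=:\Delta\ge\sqrt\tau$. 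Iterating the full H\"older norm would cost factors $(\Delta/\sqrt\tau)^{\agh}$ at every interface, and these would compound. To avoid this, carry only the quantity $A_j:=\|u\|_{L^\infty(D\times(0,t_j))}$ together with the global seminorms of the data through the iteration. The spatial seminorm of the trace $u(\cdot,t_{j-1})$ should not be fed back. Instead, use linearity: write $u$ on each interval as $h+\text{(forced part)}$, where $h$ solves the full lower-order problem with data $u(\cdot,t_{j-1})$ and $g$, and estimate $h$ in $L^\infty$ by a maximum-principle-type bound. In practice, the supremum part of \eqref{eqn:short-time-holder} with $\agh$ replaced by the fixed exponent gives
\[
 A_j\le C_1\bigl(A_{j-1}+\|g\|_{L^\infty}+T^{\beta/2}(M_T+M'_T)\bigr)+(\text{data seminorm terms}).
\]
Discrete Gronwall then yields
\[
 A_N\le C_1^{N}(\cdots)\le C\exp\bigl(CTK_T^{2/\beta}\bigr)(\cdots),
\]
since $C_1^N=e^{N\log C_1}$ and $N\le1+CTK_T^{2/\beta}$. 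This is exactly the exponential in \eqref{eqn:exponential}.

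\textbf{Step 2: the seminorm at the global scale.} The seminorm of $u$ at scale $\Delta$ is obtained globally rather than interval by interval. Treat $u$ as the solution of the principal problem on $D\times(0,T)$ with source $\nu+\divergence\bm F-\bm b\cdot\nabla u-\divergence(\bm c u)-u\mu$. Applying Theorem~\ref{thm:inhom-principal} and Corollary~\ref{cor:mixed-boundary} to it requires Morrey control of $\nabla u$, which \eqref{eqn:energy} provides in terms of $A_N$ and the data. The product bounds of Section~\ref{subsec:short-intervals}, now on $(0,T)$, bound this source in $\mathsf X^{n+\beta}(D\times(0,T))$ by $C(1+K_TT^{\beta/2})\bigl(A_N+T^{\beta/2}(M_T+M'_T)\bigr)$. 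The polynomial factor $1+K_TT^{\beta/2}$ is absorbed into the exponential, since $x\le Ce^{x^{2/\beta}}$. Combining this with Lemma~\ref{lem:homogeneous-boundary} for the boundary part gives \eqref{eqn:exponential}, with the data norms normalized by $\Delta$.

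The step I expect to be the main obstacle is Step 1: it must be arranged so that only supremum-type quantities are iterated and the normalization mismatch does not compound. If this fails, the fallback is to rely on Step 2 alone, using a crude $L^\infty$ bound obtained by comparison, and accept the same exponential through $N$.
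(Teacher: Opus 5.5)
Your route is viable and differs from the paper's, but Step~1 is not justified as written, and its motivation is mistaken. The paper iterates the full normalized norm directly. Because all $N$ subintervals have the same length $\tau$, every shifted scale equals $\|\delta_0\|_{L^\infty(D\times(0,\tau))}$. Hence \eqref{eqn:short-time-holder} feeds $\|u_{j-1}\|$ into $\|u_j\|$ with no rescaling at interfaces, giving $\max_j\|u_j\|\le CC_0^N(\cdots)$. The global seminorm is then glued from pairs in the same or adjacent intervals, inserting an interface value, and from the supremum for time separations at least $\tau$. The change to the scale $\|\delta\|_{L^\infty(D\times(0,T))}\le\sqrt N\,\|\delta_0\|_{L^\infty(D\times(0,\tau))}$ is made once, at cost $N^{\agh/2}$. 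So the compounding you anticipate does not occur.

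The gap in Step~1 is this. The supremum part of \eqref{eqn:short-time-holder} is bounded in terms of $\|u(\cdot,t_{j-1})\|_{C^{\agh}(\overline D)}$, which contains exactly the trace seminorm you want to discard. The ``maximum-principle-type bound'' you invoke for $h$ also has to be proved. The full operator with signed $\mu$ and Morrey drifts has no maximum principle; the example $\mu=-a\,dX$ in the paper grows. The comparison bound in your fallback is unavailable for the same reason.

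The sup-only recursion does follow from the construction in Proposition~\ref{prop:local}. On $(t_{j-1},t_j)$ write $u=h_0+\bm{\mathrm{G}}_0(I-\mathcal T_{t_{j-1}})^{-1}\sigma_0$. Here $h_0$ is the \emph{principal} homogeneous extension of $u(\cdot,t_{j-1})$ and $g$, and $\sigma_0=\nu+\divergence\bm{F}-(\bm{b}\cdot\nabla h_0)dX-\divergence(\bm{c}h_0)-h_0\mu$. The chain of bounds is:
\begin{enumerate}
\item Lemma~\ref{lem:std-comparison} gives $\|h_0\|_{L^\infty}\le\max\{A_{j-1},\|g\|_{L^\infty}\}$.
\item Caccioppoli gives $\iint_{Q(X,r)}|\nabla h_0|^2\,dY\le C\|h_0\|_{L^\infty}^2r^n$.
\item The product bounds and Lemma~\ref{lem:small} then give $\|\sigma_0\|_{\mathsf X^{n+\beta}(D\times(t_{j-1},t_j))}\le C\tau^{\beta/2}K_T\|h_0\|_{L^\infty}+\tau^{\beta/2}(M_T+M'_T)$.
\item \eqref{eqn:inverse} and \eqref{eqn:mixed-G0} yield $A_j\le C_1\bigl(A_{j-1}+\|g\|_{L^\infty}+\tau^{\beta/2}(M_T+M'_T)\bigr)$, with no seminorm terms at all.
\end{enumerate}

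With this repair, Step~2 goes through. Put $\kappa=T^{\beta/2}K_T$ and $m=T^{\beta/2}(M_T+M'_T)$. Then \eqref{eqn:energy}, derived on the whole strip and extended by covering, gives $\|\nabla u\|_{L^2(Q(X,r))}\le C(1+\kappa)(A_N+m)r^{n/2}$. So the source has $\mathsf X^{n+\beta}(D\times(0,T))$ norm at most $C(1+\kappa)^2(A_N+m)$. The factor is quadratic rather than linear as you wrote, which is harmless since $\kappa^{2/\beta}=TK_T^{2/\beta}$. Uniqueness in Corollary~\ref{cor:mixed-boundary} identifies $u$ with the principal solution for that source, and the estimate at the full scale follows.

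Compared with the paper, your route confines the exponential to the growth of the supremum. It obtains the full-scale H\"older norm in one step, without interface gluing or rescaling. The price is a sup-only short-time lemma and a global Morrey energy bound. The paper's route is simply a direct iteration of Proposition~\ref{prop:local}.
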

\begin{proof}
Choose $N\ge1$ with $\tau=T/N$ so that the bound in \eqref{eqn:opnorm} is at most $1/2$. One can arrange
\[
 N\le1+C T (M_{{\bm{b}},T}+M_{{\bm{c}},T}+M_{\mu,T})^{2/\beta}.
\]
Write $u_j$ for the restriction of $u$ to $D\times((j-1)\tau,j\tau)$. {Their H\"older norms use the shifted-interval convention of Subsection~\ref{subsec:short-intervals}. Since the subintervals have equal length $\tau$, their normalizing scales all equal $\|\delta_0\|_{L^\infty(D\times(0,\tau))}$.}
The spatial norm $\|u_0\|_{C^\agh(\overline D)}$ used for the first step is defined on $\overline D$ with this same scale. {The $C^\alpha$ norm of $u_0$ and the $C^{\alpha,\alpha/2}$ norm of $g$ on the right-hand side of the proposition use the full-interval scale $\|\delta\|_{L^\infty(D\times(0,T))}$.} For $2\le j\le N$, the endpoint trace from interval $j-1$ has spatial norm at most $\|u_{j-1}\|_{C^{\agh,\agh/2}(\overline D\times[(j-2)\tau,(j-1)\tau])}$ at the same scale. {Proposition~\ref{prop:local}, applied to the restrictions constructed in Theorem~\ref{thm:continuation}, gives,} for a fixed $C_0\ge2$ and $2\le j\le N$,
\[
\begin{aligned}
 \|u_j\|_{C^{\agh,\agh/2}(\overline D\times[(j-1)\tau,j\tau])}
 \le{}&C_0\|u_{j-1}\|_{C^{\agh,\agh/2}(\overline D\times[(j-2)\tau,(j-1)\tau])}\\
 &+C_0\bigl(\|g\|_{C^{\alpha,\alpha/2}(\partial D\times[0,T])}
 +\tau^{\beta/2}(M_T+M'_T)\bigr).
\end{aligned}
\]
Here the norm of $g$ uses the full interval scale; it controls its restrictions at the shorter scale and the smaller exponent. On the first interval the same estimate holds with the preceding-interval norm replaced by the spatial norm $\|u_0\|_{C^\agh(\overline D)}$ at the short scale, which is controlled by the full-scale $C^\alpha$ data norm. Summing the geometric progression gives
\[
\begin{split}
 \max_{1\le j\le N}\|u_j\|_{C^{\agh,\agh/2}(\overline D\times[(j-1)\tau,j\tau])}\le C C_0^N\bigl(&\|u_0\|_{C^\alpha(\overline D)}
 +\|g\|_{C^{\alpha,\alpha/2}(\partial D\times[0,T])}\\
 &+T^{\beta/2}(M_T+M'_T)\bigr).
\end{split}
\]
To control the global H\"older seminorm, pairs with time separation less than $\tau$ lie in the same or adjacent intervals. In the latter case insert the common interface value, first at one fixed spatial point. If the time separation is at least $\tau$, use the supremum norm. These estimates give the global seminorm with normalization at the short interval scale. The ratio of the full and short scales satisfies
\[
 \frac{\|\delta\|_{L^\infty(D\times(0,T))}}
 {\|\delta_0\|_{L^\infty(D\times(0,\tau))}}\le\sqrt N.
\]
Thus changing normalization contributes at most $C N^{\agh/2}$. Finally,
$N^{\agh/2}C_0^N\le C\exp(CT (M_{{\bm{b}},T}+M_{{\bm{c}},T}+M_{\mu,T})^{2/\beta})$, which proves \eqref{eqn:exponential}.
\end{proof}

\subsection{{Sharpness of the continuation estimate}}
Continuation does not provide a norm bound uniform in $T$. This remains false even for smooth coefficients and zero forcing. On a bounded smooth domain, let $\phi_1$ be a first Dirichlet eigenfunction of $-\Delta$, with eigenvalue $\lambda_1$, and take
\[
 {\bm{b}}={\bm{c}}=0,\quad \mu=-a\,dX,\quad \nu=0,\quad{\bm{F}}=0,
 \qquad a>\lambda_1.
\]
The zero lateral value solution with initial value $\phi_1$ is
\[
 u(x,t)=e^{(a-\lambda_1)t}\phi_1(x).
\]
The coefficient satisfies \eqref{eqn:growth}, even with constants uniform in $T$ because the spatial distance scale is bounded. The solution exists for all time but its supremum norm grows exponentially. A uniform estimate on $D\times(0,\infty)$ therefore needs an additional stability hypothesis.

The power $2/\beta$ in the coefficient dependence is also sharp for the full coefficient class. To see this on a fixed smooth bounded domain containing the origin, choose a nonnegative $f\in C_c^\infty(B(0,1))$ so large that some $\varphi\in C_c^\infty(B(0,1))$ satisfies
\[
 \int_{B(0,1)}f\varphi^2\,dx>
 \int_{B(0,1)}|\nabla\varphi|^2\,dx.
\]
For sufficiently small $\epsilon>0$, set
\[
 \mu_\epsilon=-\epsilon^{-2}f(x/\epsilon)\,dX,
 \qquad {\bm{b}}={\bm{c}}=0.
\]
At radii $r\le\epsilon$ and $r\ge\epsilon$, respectively,
\[
 |\mu_\epsilon|(Q(X,r))
 \le C\epsilon^{-2}r^{n+2},\qquad
 |\mu_\epsilon|(Q(X,r))
 \le C\epsilon^{n-2}r^2.
\]
Since $n\ge2$, both are bounded by
$C\epsilon^{-\beta}r^{n+\beta}$. Testing at an interior cylinder of radius comparable to $\epsilon$ also gives the reverse bound for the least growth constant, on any fixed interval with $T>0$ once $\epsilon$ is sufficiently small. Hence
$M_{\mu_\epsilon,T}\asymp\epsilon^{-\beta}$.

The Rayleigh quotient of $\varphi(x/\epsilon)$ for
$-\Delta-\epsilon^{-2}f(x/\epsilon)$ is at most
$-c\epsilon^{-2}$. Its first Dirichlet eigenvalue is therefore
$-\lambda_\epsilon$ with $\lambda_\epsilon\ge c\epsilon^{-2}$.
The corresponding initial eigenfunction produces a solution growing as
$e^{\lambda_\epsilon t}$. Normalize the eigenfunction to have supremum norm one. {The Rayleigh quotient also gives $\lambda_\epsilon\le\epsilon^{-2}\|f\|_{L^\infty(\R^n)}$. If $\phi_\epsilon$ is the normalized eigenfunction, then
\[
 -\Delta\phi_\epsilon
 =\bigl(\epsilon^{-2}f(x/\epsilon)-\lambda_\epsilon\bigr)\phi_\epsilon.
\]
Fix $q>n$. The global Dirichlet $W^{2,q}$ estimate on the fixed smooth domain gives $\|\phi_\epsilon\|_{W^{2,q}(D)}\le C(1+\epsilon^{-2})$. Sobolev embedding therefore bounds its $C^\alpha$ norm by the same polynomial for $0<\alpha\le1$.} This polynomial initial-data factor cannot absorb $e^{cT\epsilon^{-2}}$ for fixed $T>0$. Thus a uniform estimate of the form \eqref{eqn:exponential} cannot replace $(M_{{\bm{b}},T}+M_{{\bm{c}},T}+M_{\mu,T})^{2/\beta}$ in the exponential by a smaller power. Together with the preceding constant-potential example, this proves sharpness of the exponential time dependence and of the coefficient power.

}

\section{Periodic homogenization}\label{sec:periodic}
\begingroup
Periodic homogenization of parabolic divergence-form equations is classical; see \cite[Chapter~2]{BLP1978} for periodic cell problems and asymptotic expansions, and \cite{ZKO1981,ZKO1982} for the underlying convergence and averaging theory. Qualitative gradient corrector results for time-dependent $H$-converging operators are given in \cite[Theorem~3.8]{DallAglioMurat1997}. Quantitative expansions with smoothing and dual correctors for space-time periodic coefficients are developed in \cite[Sections~2 and~3]{GengShen2017}; a parabolic construction of flux correctors is given in \cite[Section~2.4]{Xu2023}.

For elliptic equations on bounded {domains satisfying \eqref{eqn:CDC}}, \cite[Sections~4 and~5]{HaraEllipticHomogenization} combines boundary decay with an interior corrector estimate to obtain uniform convergence under $H$-convergence and a periodic $L^\infty$ convergence rate. Here we combine the parabolic corrector constructions with the {estimates under \eqref{eqn:CDC}} established above. The local $L^2$ condition on the forcing term yields an interior Morrey bound for the residual, which gives global H\"older convergence and a corrected-gradient Morrey rate. The scalar bounds for the chosen potentials and the residual estimate are proved below.

\subsection{Setting and main convergence results}\label{periodic-note:sec:setting}
In this section, $\|\cdot\|_{L^\infty}$ denotes the norm on $D\times(0,T)$ unless a local neighborhood is explicitly specified. Cell and cylinder norms refer to the displayed sets.
Under the standing assumption \eqref{eqn:finite-global-scale}, assume \eqref{eqn:CDC} and let $A(y,\tau)$ be a real measurable $\mathbb Z^{n+1}$-periodic matrix satisfying \eqref{eqn:A}. Set
\[
 A^\epsilon(x,t)=A(x/\epsilon,t/\epsilon^2),\qquad
 \mathcal H_\epsilon=\partial_t-\divergence(A^\epsilon\nabla),\qquad
 \mathcal H_0=\partial_t-\divergence(A_{\mathrm{hom}}\nabla),
\]
where the effective matrix is defined below. {For any periodic scalar, vector, or matrix field $h$, we write}
{
\[
 h^\epsilon(x,t):=h(x/\epsilon,t/\epsilon^2).
\]
In particular, spatial derivatives of the rescaled correctors satisfy
\[
 \nabla_x\omega_j^\epsilon
 =\epsilon^{-1}(\nabla_y\omega_j)^\epsilon.
\]
}
The geometry and the cylinder $Q(X,r)$ are as in \eqref{eqn:geometry}.
Fix $0<\beta<1$ and suppose
\begin{equation}\label{eqn:periodic-data}
 \|f\|_{L^2(Q(X,r))}\le M\delta(X)^{-1}r^{n/2+\beta},
 \qquad X\in D\times(0,T),\quad 0<r<\delta(X)/4.
\end{equation}
The correctors $\omega_j$ and the effective matrix $A_{\mathrm{hom}}$ are defined in Subsection~\ref{periodic-note:sec:cell}. Let $u_\epsilon$ and $u_{\mathrm{hom}}$ solve $\mathcal H_\epsilon u_\epsilon=f$ and $\mathcal H_0u_{\mathrm{hom}}=f$, respectively, with zero initial and lateral values attained continuously. Below we write $u=u_{\mathrm{hom}}$.

We apply {Theorem~\ref{thm:main}, Theorem~\ref{thm:divergence}, and Lemma~\ref{lem:std-comparison}}, uniformly to $A^\epsilon$ and $A_{\mathrm{hom}}$. Let $\alpha_{\mathrm{bd}}>0$ be the structural boundary exponent. Choose
\[
 0<\alpha_{\mathrm{ss}}<\min\{\beta,\alpha_{\mathrm{bd}},1\},\qquad
 \alpha_{\mathrm{gh}}=\min\{\alpha_{\mathrm{in}},\alpha_{\mathrm{ss}}\},
\]
where $\alpha_{\mathrm{in}}>0$ is a common admissible scalar interior exponent. In particular $0<\alpha_{\mathrm{gh}}\le\alpha_{\mathrm{ss}}<\beta<1$.
Theorem~\ref{thm:main} gives
\begin{equation}\label{eqn:periodic-global}
 [u_\epsilon]_{C^{\alpha_{\mathrm{gh}},\alpha_{\mathrm{gh}}/2}(\overline{D\times(0,T)})}
 +[u]_{C^{\alpha_{\mathrm{gh}},\alpha_{\mathrm{gh}}/2}(\overline{D\times(0,T)})}
 \le CM\|\delta\|_{L^\infty(D\times(0,T))}^{\beta-\alpha_{\mathrm{gh}}}.
\end{equation}
Since both solutions vanish on the parabolic boundary, it also gives
\begin{equation}\label{eqn:periodic-boundary}
 |u_\epsilon(X)|+|u(X)|\le CM\|\delta\|_{L^\infty(D\times(0,T))}^{\beta-\alpha_{\mathrm{gh}}}\delta(X)^{\alpha_{\mathrm{gh}}}.
\end{equation}
Indeed, the Cauchy--Schwarz inequality in \eqref{eqn:periodic-data} gives
$\int_{Q(X,r)}|f|\,dY\le CMr^{n+\beta}$.
For divergence forcing $\operatorname{div}{\bm{F}}$, Theorem~\ref{thm:divergence} gives
\begin{equation}\label{periodic-note:divinput}
 |v(X)|\le CM'\delta(X)^{\alpha_{\mathrm{ss}}}
\end{equation}
for the bounded continuous solution with zero initial and lateral values whenever
$\|{\bm{F}}\|_{L^2(Q(X,r))}\le M' r^{n/2+\alpha_{\mathrm{ss}}}$ on admissible cylinders.  

\begin{theorem}\label{thm:periodic-main}
Assume \eqref{eqn:finite-global-scale}, \eqref{eqn:CDC}, \eqref{eqn:A}, and \eqref{eqn:periodic-data}, with the periodic coefficients and zero initial and lateral values specified in this subsection. For every $0<\alpha<\agh$ and $0<\epsilon\le\|\delta\|_{L^\infty(D\times(0,T))}$,
\begin{equation}\label{eqn:periodic-holder-rate}
 [u_\epsilon-u_{\mathrm{hom}}]_{C^{\alpha,\alpha/2}(\overline{D\times(0,T)})}
 \le CM\|\delta\|_{L^\infty(D\times(0,T))}^{\beta-\agh}
          \epsilon^{\agh-\alpha}.
\end{equation}
Moreover,
\begin{equation}\label{eqn:periodic-rate}
 \|u_\epsilon-u_{\mathrm{hom}}\|_{L^\infty(D\times(0,T))}
 \le CM\|\delta\|_{L^\infty(D\times(0,T))}^{\beta-\agh}\epsilon^\agh.
\end{equation}
The constants depend only on {$n$, $L$, $\gamma$, $\beta$, and the chosen exponents}; the constant in \eqref{eqn:periodic-holder-rate} also depends on $\alpha$.
\end{theorem}

Fix $0<\alpha<\alpha_{\mathrm{gh}}$. Use $\mathsf V^{2,n+2\alpha}(D\times(0,T))$ from Subsection~\ref{subsec:mixed-source-space}, with $\beta$ replaced by $\alpha$ and with the same normalization.
Choose a continuous cutoff $\chi_\epsilon$, valued in $[0,1]$, equal to zero when $\delta\le C_3\epsilon$ and to one when $\delta\ge2C_3\epsilon$. The fixed structural constant $C_3$ is large enough for the local convolutions used below. Set
\begin{equation}\label{periodic-note:gradienterror}
 \bm{E}_\epsilon=\nabla u_\epsilon-\nabla u_{\mathrm{hom}}
 -\chi_\epsilon(\nabla_y\omega_j)^\epsilon\partial_j u_{\mathrm{hom}}.
\end{equation}
\begin{theorem}\label{periodic-note:gradienttheorem}
Under the assumptions of Theorem~\ref{thm:periodic-main},
\begin{equation}\label{periodic-note:gradientrate}
\begin{aligned}
 \|\bm{E}_\epsilon\|_{\mathsf V^{2,n+2\alpha}(D\times(0,T))}
 \le{}&CM\|\delta\|_{L^\infty(D\times(0,T))}^{\beta}\\
 &\times\left(\frac{\epsilon}{\|\delta\|_{L^\infty(D\times(0,T))}}\right)^{\alpha_{\mathrm{gh}}-\alpha}.
\end{aligned}
\end{equation}
The constant also depends on the chosen $\alpha$ and the gaps from the exponents in the interior estimates.
\end{theorem}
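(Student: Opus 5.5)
The plan is to write the two-scale error in divergence form and apply the principal estimates of the previous sections to the residual. Let $\omega_j$ be the cell correctors and, following \cite{GengShen2017,Xu2023}, let $\bm{\phi}$ be the dual (flux) correctors. Set
\[
 w_\epsilon=u_\epsilon-u-\epsilon\chi_\epsilon\,\omega_j^\epsilon\,S_\epsilon(\partial_ju),
\]
where $S_\epsilon$ is a parabolic smoothing at scale $\epsilon$. The cutoff $\chi_\epsilon$ keeps the supports of the convolutions admissible, which is why $C_3$ must be large. A direct calculation, using the cell equation and the skew-symmetric flux corrector, should give
\[
 \mathcal H_\epsilon w_\epsilon=\divergence\bm{G}_\epsilon,
\]
with $\bm{G}_\epsilon$ a sum of three types of terms.
\begin{enumerate*}[label=(\roman*)]
\item Terms $\epsilon(\bm\phi^\epsilon,\omega^\epsilon)\nabla S_\epsilon\nabla u$ and $\epsilon\,\omega^\epsilon\partial_tS_\epsilon(\cdots)$, rewritten in divergence form through the time component of the flux corrector.
\item Smoothing errors $(A^\epsilon+A^\epsilon\nabla\omega^\epsilon)(\nabla u-S_\epsilon\nabla u)$.
\item Cutoff terms supported in the layer $\{C_3\epsilon<\delta<2C_3\epsilon\}$, containing $\nabla\chi_\epsilon$ or $(1-\chi_\epsilon)$.
\end{enumerate*}

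The key estimate is an interior Morrey bound for $\bm{G}_\epsilon$ on admissible cylinders. I will aim for
\[
 \|\bm{G}_\epsilon\|_{L^2(Q(X,r))}\le CM\|\delta\|_{L^\infty}^{\beta-\agh}\epsilon^{\agh-\alpha}r^{n/2+\alpha}.
\]
Interior regularity for the constant-coefficient operator $\mathcal H_0$ on $Q(X,\delta(X)/8)$ is available because the data satisfy \eqref{eqn:periodic-data}, which carries the weight $\delta^{-1}$. Combined with \eqref{eqn:periodic-boundary}, it should give $|\nabla u|\lesssim M\|\delta\|^{\beta-\agh}\delta^{\agh-1}$ in $L^2$-average, a corresponding Morrey bound on $\nabla^2u$ of order $\delta^{\agh-2}$, and a bound on $\partial_tu$ after subtracting $f$. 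Each term of type (i) and (ii) gains a factor $\epsilon/\delta$ relative to $\nabla u$. Since $\delta\ge C_3\epsilon$ on their support, interpolation gives
\[
 (\epsilon/\delta)\,\delta^{\agh-1}\le\epsilon^{\agh-\alpha}\delta^{\alpha-1}.
\]
On admissible cylinders $\delta^{\alpha-1}r^{n/2+1}\le r^{n/2+\alpha}$, which yields the bound above. The layer terms in (iii) are bounded by $|\nabla u|+|\nabla u_\epsilon|$ on $\delta\sim\epsilon$. By Caccioppoli and \eqref{eqn:periodic-boundary} these are controlled by $\epsilon^{\agh-1}$ on $\epsilon$-cylinders, which gives the same form with $r\lesssim\epsilon$. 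The periodic $L^2$ bounds of $\nabla\omega$ and $\bm\phi$ on unit cells are used after covering. This Morrey bookkeeping for $\bm{G}_\epsilon$ is the main obstacle, particularly the time-derivative term and the layer near the boundary.

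Given that bound, the error is handled by the principal estimates. Theorem~\ref{thm:divergence} and the mixed estimate Theorem~\ref{thm:mixed}, with exponent $\alpha$ and $\bm{F}=\bm{G}_\epsilon$, together with Lemma~\ref{lem:mixed-gradient} (which needs only the growth on interior cylinders), give
\[
 \||\nabla w_\epsilon|^2dX\|^{1/2}_{\mathsf M^{n+2\alpha'}}\lesssim M\|\delta\|^{\beta}(\epsilon/\|\delta\|)^{\agh-\alpha}.
\]
Here $\alpha'$ is chosen slightly above $\alpha$, or exponents are adjusted by the allowed gaps. Finally, $\bm{E}_\epsilon-\nabla w_\epsilon$ consists of $\chi_\epsilon(\nabla\omega)^\epsilon(\partial_ju-S_\epsilon\partial_ju)$ plus $\epsilon$-small terms and layer terms. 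These are estimated exactly as the pieces of $\bm{G}_\epsilon$, which gives \eqref{periodic-note:gradientrate} after multiplying by the normalization $\|\delta\|^{\alpha}$.
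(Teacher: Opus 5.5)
\textbf{There are two genuine gaps.}

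First, your residual identity fails for the $w_\epsilon$ you define. Write $g_j=S_\epsilon\partial_ju$ and work where $\chi_\epsilon=1$. The cell equation leaves the non-divergence terms $B^\epsilon_{ij}\partial_ig_j-\epsilon\omega_j^\epsilon\partial_tg_j$. Using $B_{ij}=\partial_k\Phi_{kij}+\partial_\tau\Psi_{ij}$, $\omega_j=\partial_i\Psi_{ij}$, and the antisymmetry of $\Phi$, these become
\[
 B^\epsilon_{ij}\partial_ig_j-\epsilon\omega_j^\epsilon\partial_tg_j
 =\epsilon\partial_k(\Phi^\epsilon_{kij}\partial_ig_j)
 -\partial_i(\epsilon^2\Psi^\epsilon_{ij}\partial_tg_j)
 +\partial_t(\epsilon^2\Psi^\epsilon_{ij}\partial_ig_j).
\]
So the ``rewriting through the time component of the flux corrector'' leaves a pure time derivative. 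It is neither a spatial divergence nor a Morrey measure with an $\epsilon$-gain. Undoing it returns $(\partial_\tau\Psi)^\epsilon$ or $\epsilon\omega^\epsilon\partial_tg$, which gain nothing. Hence Theorems~\ref{thm:divergence} and~\ref{thm:mixed} cannot be applied with $\bm F=\bm G_\epsilon$. You must put the second-order corrector $\epsilon^2\Psi^\epsilon_{ij}\partial_{ij}(\cdot)$ into $w_\epsilon$ itself, as in \eqref{periodic-note:residualfield}. Its gradient then has to be controlled in the recovery step, including at radii below $\epsilon$. That requires $\nabla_y\Psi\in L^\infty$ (Lemma~\ref{periodic-note:potentialbounds}), which is why the paper uses the specific potentials $\Psi_{ij}=P_{ij}-\partial_iP_j$. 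Your list of layer terms also omits the scalar terms coming from $\partial_t\chi_\epsilon$ and $\nabla\chi_\epsilon\cdot A^\epsilon(e_j+\nabla_y\omega_j)^\epsilon$. Theorem~\ref{thm:mixed} does accept these as measure data, so that omission is minor.

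Second, your final step loses exactly what you need. Theorem~\ref{thm:mixed} and Lemma~\ref{lem:mixed-gradient}, fed data of Morrey exponent $\alpha$, return gradient growth only at $\min\{\ain,\ass\}$. Both of these must be chosen strictly below $\alpha$: the geometric series in Lemma~\ref{lem:interior-forced} and the barrier comparison require it. They never return an $\alpha'>\alpha$.

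To land at exponent $\alpha$ you would need data exponent $\alpha''>\alpha$. Then your bounds give $\bm G_\epsilon$ only the Morrey constant $CM\epsilon^{\agh-\alpha''}$. On cylinders with $\delta(X)\sim r\sim\epsilon$ you know only $\|\nabla u\|_{L^2(Q(X,r))}\le CMr^{n/2+\agh}$, and interior residual terms at $\rho\sim\epsilon$ behave the same way. The outcome is the rate $\epsilon^{\agh-\alpha''}$, strictly weaker than \eqref{periodic-note:gradientrate}.

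The global route can be rescued, but only with an extra step. You must rerun all input bounds (\eqref{eqn:periodic-global}, \eqref{eqn:periodic-boundary}, \eqref{periodic-note:localhess}--\eqref{periodic-note:localgrad}, and the raw gradient growth) at a slightly larger exponent $\agh^+>\agh$; the strict choice of $\ass$ and $\ain$ allows this. Then take $\alpha''=\alpha+\agh^+-\agh$, with the principal theorem's interior and barrier exponents in $(\alpha,\alpha'')$. None of this is in your proposal.

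The paper avoids the loss by localizing instead:
\begin{enumerate*}[label=(\roman*)]
\item It first proves the sharp bound $\|u_\epsilon-u_{\mathrm{hom}}\|_{L^\infty}\le CM\epsilon^{\agh}$ (Theorem~\ref{thm:periodic-main}), using dyadic layers, the support-depth bound \eqref{periodic-note:layerbound}, and comparison on $\{\delta>R\}$.
\item It then runs the Campanato iteration \eqref{periodic-note:localenergyiteration} on cylinders of size comparable to $\delta(Y)\ge C\epsilon$. There the residual has exponent $\beta$ and the homogeneous decay has exponent $\ain$, both above $\alpha$, so nothing is lost.
\item In the layer $\delta\lesssim\epsilon$ it uses only the raw growth of $\nabla u_\epsilon$ and $\nabla u$ from Lemma~\ref{lem:mixed-gradient} with $r\lesssim\epsilon$.
\end{enumerate*}
That lemma is also what your layer estimate needs for $r<\epsilon$. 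A bound of order $\epsilon^{\agh-1}$ on $\epsilon$-cylinders gives no decay in $r$ below $\epsilon$.
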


For the proof normalize $\|\delta\|_{L^\infty(D\times(0,T))}=1$. Under dilation by $\|\delta\|_{L^\infty(D\times(0,T))}$, the datum constant becomes $M\|\delta\|_{L^\infty(D\times(0,T))}^\beta$ and the period becomes $\epsilon/\|\delta\|_{L^\infty(D\times(0,T))}$. We restore this scaling at the end.

The normalization applies throughout Subsections~\ref{periodic-note:sec:local}--\ref{periodic-note:sec:gradient}. The supplementary compact-support energy statement below uses its own unscaled domain.
\subsection{Cell fields and their regularity}\label{periodic-note:sec:cell}
The cell problem and effective matrix follow the standard periodic construction in \cite[Chapter~2]{BLP1978} and \cite[Section~2]{GengShen2017}. Use the periodic cell $(0,1)^{n+1}$. Repeated spatial indices run from $1$ to $n$ and are summed. Define the mean-zero periodic cell solutions by
\begin{equation}\label{periodic-note:cell}
 \partial_\tau\omega_j-\operatorname{div}_y(A(e_j+\nabla_y\omega_j))=0,
 \qquad \int_{(0,1)^{n+1}}\omega_j\,dy\,d\tau=0.
\end{equation}
Put
\[
 (A_{\mathrm{hom}})_{ij}=\int_{(0,1)^{n+1}}[A(e_j+\nabla_y\omega_j)]_i\,dy\,d\tau,
 \quad B_{ij}=[A(e_j+\nabla_y\omega_j)]_i-(A_{\mathrm{hom}})_{ij}.
\]
Thus $\partial_iB_{ij}=\partial_\tau\omega_j$ and $\int_{(0,1)^{n+1}}B_{ij}\,dy\,d\tau=0$.
We use the parabolic potential construction of \cite[Section~2.4]{Xu2023}, with the spatial and temporal components written separately. For the mean-zero periodic solutions of
\[
 (\partial_\tau-\Delta_y)P_{ij}=B_{ij},\qquad
 (\partial_\tau-\Delta_y)P_j=\omega_j,
\]
define
\begin{equation}\label{periodic-note:definition}
 \Phi_{kij}=\partial_iP_{kj}-\partial_kP_{ij},\qquad
 \Psi_{ij}=P_{ij}-\partial_iP_j.
\end{equation}
The divergence of the first auxiliary equation gives $\partial_iP_{ij}=\partial_\tau P_j$. Consequently,
\begin{equation}\label{periodic-note:identities}
\begin{aligned}
 B_{ij}&=\partial_k\Phi_{kij}+\partial_\tau\Psi_{ij},
 &\partial_i\Psi_{ij}&=\omega_j,\\
 \Phi_{kij}&=-\Phi_{ikj},
 &\Phi_{kij}&=\partial_i\Psi_{kj}-\partial_k\Psi_{ij}.
\end{aligned}
\end{equation}
In particular $(\partial_\tau-\Delta_y)\Psi_{ij}=B_{ij}-\partial_i\omega_j$. These equations, including the curl relation in \eqref{periodic-note:identities}, fix the constructed potentials modulo constants. Their space-time means are zero by \eqref{periodic-note:definition}. The tensor $\Psi$ is not antisymmetric in its spatial indices.

We choose $\Phi$ and $\Psi$ together through \eqref{periodic-note:definition}, so that the flux identities and the curl relation in \eqref{periodic-note:identities} hold for the same potentials. Let $v$ denote the spatially smoothed macroscopic solution defined in Subsubsection~\ref{periodic-note:subsec:expansion}. The temporal divergence identity makes the second correction $\epsilon^2\Psi_{ij}^\epsilon\partial_{ij}v$ absorb a time derivative, while spatial antisymmetry cancels the third derivatives contracted against $\Phi$. The curl relation in \eqref{periodic-note:identities} expresses $\Phi$ in terms of $\Psi$ and ensures that the spatial and temporal flux decompositions use the same potentials. For this choice, scalar cell regularity yields the additional bounds in Lemma~\ref{periodic-note:potentialbounds}, which control both the residual and the gradient of the second correction.

Use the exponent $\alpha_{\mathrm{in}}$ chosen in Subsection~\ref{periodic-note:sec:setting}, small enough also for the scalar correctors. Applied to $y_j+\omega_j$, scalar regularity and the Caccioppoli inequality \cite{LSU} give the cell bounds
\begin{equation}\label{periodic-note:cellgrowth}
 \int_{Q(Z,r)}(|\nabla_y\omega|^2+|B|^2)\,dy\,d\tau
 \le Cr^{n+2\alpha_{\mathrm{in}}},
 \qquad \|\omega\|_{L^\infty((0,1)^{n+1})}\le C.
\end{equation}
The growth estimate holds for $0<r\le1$. Here $Q(Z,r)$ is a backward parabolic cylinder in the periodic extension. The constants are uniform in $Z$.

\begin{lemma}\label{periodic-note:potentialbounds}
The potentials in \eqref{periodic-note:definition} satisfy
\[
 \|\Phi\|_{L^\infty((0,1)^{n+1})}+\|\Psi\|_{L^\infty((0,1)^{n+1})}
 +\|\nabla_y\Psi\|_{L^\infty((0,1)^{n+1})}\le C.
\]
\end{lemma}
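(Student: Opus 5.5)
The plan is to treat $P_{ij}$ and $P_j$ as periodic solutions of the constant-coefficient heat equation on the torus $\mathbb T^{n+1}=\R^{n+1}/\mathbb Z^{n+1}$, with right-hand sides $B_{ij}$ and $\omega_j$. We bound them through their mean-zero heat potentials. By \eqref{periodic-note:definition}, $\Phi$ involves only $\nabla_yP_{ij}$, and $\Psi$ involves $P_{ij}$ and $\nabla_yP_j$. Thus $\nabla_y\Psi$ requires $\nabla_yP_{ij}$ and $\nabla_y^2P_j$. Hence it suffices to prove
\[
 \|P_{ij}\|_{L^\infty}+\|\nabla_yP_{ij}\|_{L^\infty}+\|\nabla_yP_j\|_{L^\infty}+\|\nabla_y^2P_j\|_{L^\infty}\le C.
\]
Each $P$ has a periodic representation by convolution with the periodized heat kernel $\Gamma_{\mathrm{per}}(y,\tau)=\sum_{k}\Gamma(y+k,\tau+\ell)$, minus the mean. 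Equivalently, it solves on $\R^{n+1}$ with the mean removed. The mean-zero condition on the data ($\int B=0$, $\int\omega=0$) makes the infinite sum over time periods converge. The contribution of source times older than one period decays exponentially after the mean is subtracted, by the spectral gap of the Laplacian on the torus. So only a local Newtonian-type heat potential over a unit window matters.

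\textbf{First step: $P_{ij}$ and $\nabla_yP_{ij}$.} We have $\nabla_y\Gamma(y,\tau)\lesssim d_p^{-n-1}$. The Morrey bound \eqref{periodic-note:cellgrowth} gives $\int_{Q(Z,r)}|B|\le Cr^{n/2+1}\cdot r^{n/2+\ain}=Cr^{n+1+\ain}$ by Cauchy--Schwarz. Summing over dyadic parabolic annuli, as in Lemma~\ref{lem:local-potential}, yields
\[
 |\nabla_yP_{ij}(Y)|\le C\sum_k 2^{k(n+1)}2^{-k(n+1+\ain)}\le C .
\]
For this we use forward annuli $Q(Y,2^{-k})\setminus Q(Y,2^{-k-1})$, since the kernel is supported backward in time. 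The bound on $P_{ij}$ itself is easier, since the kernel singularity is only $d_p^{-n}$. This gives the bound on $\Phi$ and on the $P_{ij}$ part of $\Psi$.

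\textbf{Second step: $\nabla_yP_j$ and $\nabla_y^2P_j$.} Since $\omega_j$ is only bounded, a direct second-derivative bound fails. This is the main obstacle. I would use the cell equation $\partial_\tau\omega_j=\partial_iB_{ij}$ together with $\partial_iP_{ij}=\partial_\tau P_j$. Differentiating, $\partial_l P_j$ solves the heat equation with datum $\partial_l\omega_j$. The idea is to move one derivative onto the kernel and exploit the Morrey bound for $\nabla_y\omega$ in \eqref{periodic-note:cellgrowth}. Then $\partial_m\partial_lP_j=\partial_m\Gamma*\partial_l\omega_j$ is again a first-derivative kernel against a density with $\int_{Q(Z,r)}|\nabla\omega|\le Cr^{n+1+\ain}$. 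The dyadic argument of the first step therefore gives $\|\nabla_y^2P_j\|_{L^\infty}\le C$. The bound on $\nabla_yP_j$ follows in the same way, with the bounded datum $\omega_j$ and one kernel derivative. Justification uses mollification of $\omega_j$ in $(y,\tau)$ and passage to the limit, since the estimates depend only on the Morrey constants. The mean-zero normalization of $P_j$ is irrelevant for derivatives.

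\textbf{Third step: the curl relation and mean-zero conventions.} These impose only constants, already fixed, and so do not affect the bounds. If the convolution with $\nabla^2\Gamma$ in the second step raises concerns about the exchange of derivatives in time, I would alternatively bound $\nabla_y\Psi$ using $\Psi_{ij}=P_{ij}-\partial_iP_j$ and the equation $(\partial_\tau-\Delta_y)\Psi_{ij}=B_{ij}-\partial_i\omega_j$. Its right-hand side is an $L^2$ field with Morrey exponent $n/2+\ain$. Applying the first-derivative potential estimate directly to $\Psi$ then yields $\|\nabla_y\Psi\|_{L^\infty}\le C$.
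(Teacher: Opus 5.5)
Your proposal is correct. For $\Phi$ and the $P_{ij}$ part of $\Psi$ it coincides with the paper's argument: Cauchy--Schwarz on \eqref{periodic-note:cellgrowth}, dyadic backward annuli $Q(Y,2^{-k})\setminus Q(Y,2^{-k-1})$ against the $d_p^{-n-1}$ gradient kernel, and exponential decay of the spatially mean-zero modes. State one point explicitly, as the paper does. The spatially constant mode of $P_{ij}$ does not decay; it is the periodic primitive of the slice mean $\int_{(0,1)^n}B_{ij}\,dy$, and it is bounded because that mean has zero time average.

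The genuine difference is in how you bound $\nabla_yP_j$ and $D_y^2P_j$. The paper uses H\"older continuity of $\omega_j$ (the same scalar regularity behind \eqref{periodic-note:cellgrowth}) and then constant-coefficient parabolic Schauder estimates for $(\partial_\tau-\Delta_y)P_j=\omega_j$. You instead differentiate once, so that $\partial_lP_j$ is the heat potential of $\partial_l\omega_j$. You then rerun the first-step kernel estimate with $\int_{Q(Z,r)}|\nabla_y\omega|\le Cr^{n+1+\ain}$, which Cauchy--Schwarz also extracts from \eqref{periodic-note:cellgrowth}. Because $\partial_l\omega_j$ has zero mean on every time slice, the large-time tail is automatically exponentially small. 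This is valid. Your variant, applying the estimate directly to $(\partial_\tau-\Delta_y)\Psi_{ij}=B_{ij}-\partial_i\omega_j$, is the most economical form of it.

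Your route buys uniformity: a single elementary potential estimate handles $B$ and $\nabla_y\omega$ alike, with no Schauder theory. The paper's route is shorter given standard results, and it also yields H\"older continuity of $D_y^2P_j$, which is convenient for the later claim that the potentials are continuous. With your approach, that continuity needs an extra near/far difference estimate of the kind used for the localized potential in Subsection~\ref{periodic-note:sec:local}.
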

\begin{proof}
The Cauchy--Schwarz inequality and \eqref{periodic-note:cellgrowth} give
$\int_{Q(Z,r)}|B|\,dy\,d\tau\le Cr^{n+1+\alpha_{\mathrm{in}}}$.
The spatial gradient of the heat kernel has size $Cr^{-n-1}$ on a parabolic annulus of radius $r$. Thus the small-scale contributions to $\nabla_yP_{ij}$ are bounded by a convergent sum $C\sum_{m\ge0}2^{-m\alpha_{\mathrm{in}}}$. The same argument bounds $P_{ij}$. On the spatial mean-zero modes the periodic heat semigroup decays exponentially at large times. The spatially constant mode of $P_{ij}$ is the periodic primitive of the spatial mean of $B_{ij}$, and is bounded by its $L^1$ norm over a period. This proves boundedness of $P_{ij}$ and $\nabla_yP_{ij}$.

Scalar interior regularity gives a parabolic H\"older bound for $\omega_j$. Constant-coefficient parabolic Schauder estimates for $(\partial_\tau-\Delta_y)P_j=\omega_j$ therefore bound $\nabla_yP_j$ and $D_y^2P_j$. Definition \eqref{periodic-note:definition} proves the assertion. 
\end{proof}

\subsection{Common local estimates}\label{periodic-note:sec:local}
\subsubsection{Local regularity estimates for the homogenized solution}
Fix $Z\in D\times(0,T)$ and put $\rho=\delta(Z)\le1$. Use $Q(Z,\rho/8)$ as an outer cylinder and $Q(Z,\rho/32)$ as an inner cylinder. On the outer cylinder, $\delta$ is comparable to $\rho$ and the forcing constant in \eqref{eqn:periodic-data} is at most $CM\rho^{-1}$. For every $Y\in Q(Z,\rho/32)$ and $0<r<\rho/128$, the constant-coefficient equation gives
\begin{align}
 \|D_x^2u\|_{L^2(Q(Y,r))}+\|\partial_tu\|_{L^2(Q(Y,r))}
 &\le CM\rho^{\alpha_{\mathrm{gh}}-1-\beta}r^{n/2+\beta},\label{periodic-note:localhess}\\
 \|\nabla u\|_{L^\infty(Q(Z,\rho/32))}
 &\le CM\rho^{\alpha_{\mathrm{gh}}-1},\qquad
 [\nabla u]_{C^{\beta,\beta/2}(Q(Z,\rho/32))}
 \le CM\rho^{\alpha_{\mathrm{gh}}-1-\beta}.\label{periodic-note:localgrad}
\end{align}
The constants are independent of {$Z$, $\rho$, and $r$}. These local norms are taken on the displayed cylinders, rather than on the full domain. Spatial shifts of size at most $\epsilon<\rho/1024$ from the inner cylinder remain in the outer cylinder; the same estimates hold on such shifted neighborhoods after changing only the fixed numerical ratios.

For the heat potential $w$ of a datum $g$ for $\mathcal H_0$ on $\R^{n+1}$, the standard $W^{2,1}_2$ estimate gives
\[
 \|D_x^2w\|_{L^2(\R^{n+1})}+\|\partial_tw\|_{L^2(\R^{n+1})}
 \le C\|g\|_{L^2(\R^{n+1})};
\]
see \cite[Chapter~IV, Section~3]{LSU} and \cite[Chapter~VII]{Lieberman1996}. In the constant-coefficient $L^2$ case this also follows directly from the Plancherel identity and ellipticity. We use this estimate to obtain the local growth bounds as follows.

Restrict $f$ to $Q(Z,\rho/8)$ and extend this restriction by zero to $\R^{n+1}$. This localization requires no time extension of $f$ beyond the upper face. For radii smaller than a fixed multiple of $\rho$, recenter cylinders meeting this support and use \eqref{eqn:periodic-data}. For larger radii, use the total $L^2$ bound of the localized datum. A finite covering at scale $\rho$ therefore gives the whole-space Morrey bound $CM\rho^{-1}r^{n/2+\beta}$. Construct its heat potential for $\mathcal H_0$. To estimate the second spatial derivatives and time derivative on $Q(Y,r)$, split the datum into its part on $Q(Y,2r)$ and the complementary part. Apply the $L^2$ estimate to the first part and the differentiated heat-kernel bounds to the second part. On the annulus of radius $2^mr$, the $L^1$ mass is at most $CM\rho^{-1}(2^mr)^{n+1+\beta}$, while a second spatial or first temporal derivative of the kernel is at most $C(2^mr)^{-n-2}$. Its $L^2(Q(Y,r))$ contribution is consequently at most $CM\rho^{-1}r^{n/2+\beta}2^{m(\beta-1)}$. Thus the far contributions sum because $\beta<1$. 

For the gradient, let $d>0$ be the parabolic distance between two points in $Q(Z,\rho/32)$. Split the localized source into a neighborhood of radius comparable to $d$ about these points and its complement. On the near part, the gradient kernel bound and the $L^1$ growth of the source give a contribution at most
\[
 CM\rho^{-1}\sum_{m\ge0}(2^{-m}d)^\beta
 \le CM\rho^{-1}d^\beta.
\]
On the far part, a gradient kernel difference at parabolic distance $d$ is bounded by $Cd a^{-n-2}$ at scale $a\ge d$. The spatial difference uses a second spatial derivative of the heat kernel; the temporal difference uses its gradient time derivative and $|t-s|\le d^2$. The resulting annular contributions satisfy
\[
 CM\rho^{-1}d\sum_{m\ge0}(2^md)^{\beta-1}
 \le CM\rho^{-1}d^\beta.
\]
The near sum converges because $\beta>0$ and the far sum because $\beta<1$. Thus the gradient of the localized potential has parabolic $C^{\beta,\beta/2}$ seminorm at most $CM\rho^{-1}$ on the inner cylinder. The heat kernels and their derivative estimates are given in \cite[Chapter~IV, Sections~1 and~2]{LSU}.

Subtract this potential from $u$. The difference is caloric on a smaller cylinder. Its oscillation is bounded by $CM\rho^{\alpha_{\mathrm{gh}}}$ by \eqref{eqn:periodic-boundary} and the potential bound $CM\rho^\beta$. Caloric derivative estimates give pointwise second spatial and first temporal derivatives bounded by $CM\rho^{\alpha_{\mathrm{gh}}-2}$. For $0<r<\rho/128$, their $L^2$ contributions satisfy
\[
 \rho^{\alpha_{\mathrm{gh}}-2}r^{(n+2)/2}
 \le \rho^{\alpha_{\mathrm{gh}}-1-\beta}r^{n/2+\beta}.
\]
The gradient and its H\"older seminorm follow from the same caloric estimates and the gradient-potential estimates. This proves \eqref{periodic-note:localhess}--\eqref{periodic-note:localgrad}. All estimates use past times only, and extend to an upper time face by limits from below.

\subsubsection{A common local expansion}\label{periodic-note:subsec:expansion}
We adapt the smoothed two-scale expansion in \cite[Theorem~2.2 and equation~(2.14)]{GengShen2017}, using the parabolic potentials in \eqref{periodic-note:definition}. Here the convolution is spatial and local. Applying it also to $u$ introduces a forcing difference, which we retain in divergence form below.
Choose a symmetric $\eta\in C_c^\infty(B(0,1))$ with $\eta\ge0$ and $\int_{\R^n}\eta(x)\,dx=1$. Set $\eta_\epsilon(x)=\epsilon^{-n}\eta(x/\epsilon)$ and $v=\eta_\epsilon*_xu$ wherever $\delta>C_0\epsilon$. The identities below hold in distributions on that interior region; they also hold for smooth compactly supported $u$ on a whole-time cylinder.
For this local convolution, define
\[
 w_\epsilon=u_\epsilon-v-\epsilon\omega_j^\epsilon\partial_jv
                -\epsilon^2\Psi_{ij}^\epsilon\partial_{ij}v.
\]
The forcing difference has the divergence representation
\[
 f-\eta_\epsilon*_xf=\operatorname{div}\bm{F}_\epsilon,\qquad
 \bm{F}_\epsilon(x,t)=\int_{\R^n}\eta_\epsilon(y)y
           \int_0^1 f(x-sy,t)\,ds\,dy.
\]
Define the residual vector field $\bm{r}_\epsilon$ by its components:
\begin{equation}\label{periodic-note:residualfield}
\begin{aligned}
 r_{\epsilon,k}={}&F_{\epsilon,k}
  +\epsilon A_{k\ell}^\epsilon\omega_j^\epsilon\partial_{\ell j}v
  +\epsilon\Phi_{kij}^\epsilon\partial_{ij}v\\
 &-\epsilon^2\Psi_{kj}^\epsilon\partial_j\partial_tv
  +A_{k\ell}^\epsilon\partial_\ell
        (\epsilon^2\Psi_{ij}^\epsilon\partial_{ij}v).
\end{aligned}
\end{equation}
The cell identities imply
\begin{equation}\label{periodic-note:resid}
 \mathcal H_\epsilon w_\epsilon=\operatorname{div}\bm{r}_\epsilon.
\end{equation}
To verify \eqref{periodic-note:resid}, the terms left after the first correction are
\begin{align*}
 B_{ij}^\epsilon\partial_{ij}v
       -\epsilon\omega_j^\epsilon\partial_j\partial_tv
 &=\epsilon\partial_k(\Phi_{kij}^\epsilon\partial_{ij}v)
       +\partial_t(\epsilon^2\Psi_{ij}^\epsilon\partial_{ij}v)
       -\partial_k(\epsilon^2\Psi_{kj}^\epsilon\partial_j\partial_tv).
\end{align*}
The spatial third derivatives in the first term cancel by antisymmetry in {$k$ and $i$}. Subtracting $\epsilon^2\Psi_{ij}^\epsilon\partial_{ij}v$ cancels its time derivative and produces the last flux in \eqref{periodic-note:resid}.

\subsubsection{Morrey bounds for the residual and corrections}
For $\delta(Y)\asymp\rho\ge C_1\epsilon$ and $r<c\rho$,
\begin{equation}\label{periodic-note:fluxmorrey}
 \|\bm{r}_\epsilon\|_{L^2(Q(Y,r))}
 \le CM\epsilon\rho^{\alpha_{\mathrm{gh}}-1-\beta}r^{n/2+\beta}.
\end{equation}
To prove it, apply Minkowski's integral inequality to every spatial convolution. Centers shifted by at most $\epsilon$ have boundary distance comparable to $\rho$ and their cylinders stay in the neighborhoods where \eqref{periodic-note:localhess} holds. Thus convolution preserves its local Morrey bound, including when $r<\epsilon$. Moving one derivative to the kernel costs $C\epsilon^{-1}$. The two terms $\epsilon A_{k\ell}^\epsilon\omega_j^\epsilon\partial_{\ell j}v$ and $\epsilon\Phi_{kij}^\epsilon\partial_{ij}v$ use boundedness of {$A$, $\omega$, and $\Phi$} and \eqref{periodic-note:localhess}. In $\epsilon^2\Psi_{kj}^\epsilon\partial_j\partial_tv$, move $\partial_j$ to the convolution kernel and use the estimate for $\partial_tu$ in \eqref{periodic-note:localhess}. Expanding the last term in \eqref{periodic-note:residualfield} gives
\[
 \epsilon A_{k\ell}^\epsilon(\partial_{y_\ell}\Psi_{ij})^\epsilon\partial_{ij}v
 +\epsilon^2 A_{k\ell}^\epsilon\Psi_{ij}^\epsilon\partial_{\ell ij}v.
\]
The first term uses boundedness of $\nabla_y\Psi$; for the second, move $\partial_\ell$ to the kernel. Both are controlled by $C\epsilon$ times the local $L^2$ growth bound for $D_x^2u$. The field $\bm{F}_\epsilon$ is bounded by $C\epsilon$ times the local Morrey norm of $f$. Its prefactor $M\rho^{-1}$ is at most $M\rho^{\alpha_{\mathrm{gh}}-1-\beta}$, since $\rho\le1$ and $\alpha_{\mathrm{gh}}<\beta$. Lemma~\ref{periodic-note:potentialbounds} now proves \eqref{periodic-note:fluxmorrey}.

The correction is pointwise small:
\begin{equation}\label{periodic-note:correctionsmall}
 |v-u|+|\epsilon\omega_j^\epsilon\partial_jv|+|\epsilon^2\Psi_{ij}^\epsilon\partial_{ij}v|
 \le CM\epsilon\delta^{\alpha_{\mathrm{gh}}-1}.
\end{equation}
The first two terms follow from \eqref{periodic-note:localgrad}. For the last term subtract a constant gradient inside the differentiated convolution to obtain
\[
 |D_x^2v|\le C\epsilon^{\beta-1}[\nabla u]_{C_x^\beta},\qquad
 |\epsilon^2\Psi_{ij}^\epsilon\partial_{ij}v|\le CM\epsilon^{1+\beta}\rho^{\alpha_{\mathrm{gh}}-1-\beta}
       \le CM\epsilon\rho^{\alpha_{\mathrm{gh}}-1}.
\]
The potentials are continuous by their heat-potential construction, and $D_x^2v$ is continuous by the spatial H\"older bound and the continuity of $\nabla u$. Hence the corrected error has continuous values on the artificial parabolic boundaries used below. The residual identity and the local energy regularity justify weak comparison.

\subsubsection{A multiplier estimate from the cell equation}
\begin{lemma}\label{periodic-note:multiplier}
For $d\in C_c^\infty(\R^{n+1})$ and every $j$, with the $L^2$ norms in this lemma taken over $\R^{n+1}$,
\begin{equation}\label{periodic-note:mult}
 \|\,| (\nabla_y\omega_j)^\epsilon|\,|\nabla d|\,\|_{L^2}
 \le C\bigl(\|\nabla d\|_{L^2}+\epsilon\|D_x^2d\|_{L^2}
                         +\epsilon\|\partial_td\|_{L^2}\bigr).
\end{equation}
\end{lemma}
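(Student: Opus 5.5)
The plan is to prove \eqref{periodic-note:mult} by testing the rescaled cell equation for $y_j+\omega_j$ against $(\omega_j^\epsilon)\,|\nabla d|^2$-type weights, i.e.\ a weighted Caccioppoli argument in the fast variables. First I will replace $|\nabla d|^2$ by a smooth nonnegative weight: since squares of components suffice, it is enough to bound $\iint|(\nabla_y\omega_j)^\epsilon|^2(\partial_kd)^2\,dX$ for each $k$. Set $\phi=\partial_kd\in C_c^\infty(\R^{n+1})$. The function $\chi=\epsilon\omega_j^\epsilon$ satisfies $\mathcal H_\epsilon(x_j+\chi)=0$ on $\R^{n+1}$, with $\nabla\chi=(\nabla_y\omega_j)^\epsilon$ and $\|\chi\|_{L^\infty}\le C\epsilon$ by \eqref{periodic-note:cellgrowth}.

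Test this equation with $\chi\phi^2$. Ellipticity gives
\[
 \iint|\nabla\chi|^2\phi^2\le C\iint|\nabla\chi||\chi||\phi||\nabla\phi|+C\iint|\nabla\chi|\phi^2+\Bigl|\iint\chi\,\partial_t\chi\,\phi^2\Bigr|+C\iint|\chi||\phi||\nabla\phi|.
\]
The time term equals $-\iint\chi^2\phi\,\partial_t\phi$, bounded by $C\epsilon^2\|\phi\|_{L^2}\|\partial_t\phi\|_{L^2}$. The gradient cross term is absorbed by Young, leaving $C\epsilon^2\|\nabla\phi\|_{L^2}^2$. The middle linear term $\iint A^\epsilon e_j\cdot\nabla(\chi\phi^2)$ is the delicate one: absorbed crudely it yields $C\|\phi\|_{L^2}^2$, which is exactly the desired $\|\nabla d\|_{L^2}^2$ contribution. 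So after Young,
\[
 \iint|\nabla\chi|^2\phi^2\le C\bigl(\|\phi\|_{L^2}^2+\epsilon^2\|\nabla\phi\|_{L^2}^2+\epsilon^2\|\phi\|_{L^2}\|\partial_t\phi\|_{L^2}\bigr).
\]

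Now $\phi=\partial_kd$, so $\|\nabla\phi\|\le\|D_x^2d\|$, while $\|\partial_t\phi\|_{L^2}=\|\partial_t\partial_kd\|$ is not directly available; this is the main obstacle. I will handle it by integrating by parts in $x_k$: $\iint\chi^2\partial_kd\,\partial_t\partial_kd=-\iint\partial_t d\,\partial_k(\chi^2\partial_kd)$, which gives $\iint\partial_td\,(2\chi\partial_k\chi\,\partial_kd+\chi^2\partial_{kk}d)$. The second part is at most $C\epsilon^2\|\partial_td\|\|D_x^2d\|$. The first is at most $C\epsilon\|\partial_td\|\,\|\nabla\chi\,\phi\|$, and Young absorbs $\|\nabla\chi\phi\|^2$ into the left side at the cost of $C\epsilon^2\|\partial_td\|^2$. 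Summing over $k$ and taking square roots gives \eqref{periodic-note:mult}. The justification of testing is standard, since $\chi\in V_{\loc}$ is bounded and periodic and the weight has compact support; the time derivative is handled by Steklov averaging.
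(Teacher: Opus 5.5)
Your proposal is correct and follows the paper's proof: test the rescaled cell equation with $\omega_j^\epsilon|\nabla d|^2$ (your componentwise weights $(\partial_k d)^2$ sum to this), move the time derivative onto the weight, and integrate by parts in space to trade $\nabla\partial_t d$ for $\partial_t d$. The resulting terms are $K_1^{1/2}(a+\epsilon b+\epsilon c)$, $\epsilon ab$ and $\epsilon^2 bc$, with $a=\|\nabla d\|_{L^2}$, $b=\|D_x^2d\|_{L^2}$, $c=\|\partial_t d\|_{L^2}$, and Young's inequality finishes the proof.
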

\begin{proof}
For a fixed $j$, put
$K_1=\epsilon^2\int_{\R^{n+1}}|\nabla\omega_j^\epsilon|^2|\nabla d|^2\,dX$.
The rescaled cell equation is
\[
 \epsilon^2\partial_t(\omega_j^\epsilon)-\epsilon^2\operatorname{div}(A^\epsilon\nabla\omega_j^\epsilon)
 =\epsilon\operatorname{div}(A^\epsilon e_j).
\]
Test this equation with $(\omega_j^\epsilon)|\nabla d|^2$. The time term satisfies
\begin{align*}
 \epsilon^2\int_{\R^{n+1}}\partial_t(\omega_j^\epsilon)\,(\omega_j^\epsilon)|\nabla d|^2\,dX
 &=-\epsilon^2\int_{\R^{n+1}}(\omega_j^\epsilon)^2\nabla d\cdot\nabla\partial_td\,dX\\
 &=\epsilon^2\int_{\R^{n+1}}
 (2(\omega_j^\epsilon)\nabla\omega_j^\epsilon\cdot\nabla d+(\omega_j^\epsilon)^2\Delta d)\partial_td\,dX.
\end{align*}
For weak cell solutions the first line denotes the weighted energy pairing; time averages justify the identity. Spatial integration by parts is valid since $(\omega_j^\epsilon)$ is bounded, $\nabla\omega_j^\epsilon$ is locally square integrable, and $d$ is smooth with compact support.

Set {$a=\|\nabla d\|_{L^2}$, $b=\|D_x^2d\|_{L^2}$, and $c=\|\partial_td\|_{L^2}$}.
{Boundedness of $(\omega_j^\epsilon)$, boundedness of $A$, and ellipticity} give
\[
 K_1\le C K_1^{1/2}(a+\epsilon b+\epsilon c)
       +C\epsilon ab+C\epsilon^2bc.
\]
The diffusion cross term is bounded by $C\epsilon K_1^{1/2}b$, the forcing terms by $CK_1^{1/2}a+C\epsilon ab$, and the time term by $C\epsilon K_1^{1/2}c+C\epsilon^2bc$. Young's inequality now yields \eqref{periodic-note:mult}.
\end{proof}

\subsubsection{Local estimates for removing the convolution}
Fix $0<\alpha<\alpha_{\mathrm{gh}}$ as in Theorem~\ref{periodic-note:gradienttheorem}. Let $\rho=\delta(Y)\ge C\epsilon$ and write $d=v-u$. On neighborhoods of boundary distance comparable to $\rho$, the coefficient in \eqref{periodic-note:localhess} is $CM\rho^{\alpha_{\mathrm{gh}}-1-\beta}$. For $\epsilon\le r<c\rho$, symmetry of $\eta$ and Minkowski's integral inequality give
\begin{align*}
 \|d\|_{L^2(Q(Y,r))}
 &\le CM\epsilon^2\rho^{\alpha_{\mathrm{gh}}-1-\beta}r^{n/2+\beta},\\
 \|\nabla d\|_{L^2(Q(Y,r))}
 &\le CM\epsilon\rho^{\alpha_{\mathrm{gh}}-1-\beta}r^{n/2+\beta},\\
 \|D_x^2d\|_{L^2(Q(Y,r))}+\|\partial_td\|_{L^2(Q(Y,r))}
 &\le CM\rho^{\alpha_{\mathrm{gh}}-1-\beta}r^{n/2+\beta}.
\end{align*}
All these bounds allow a fixed enlargement of the cylinder. Localizing Lemma~\ref{periodic-note:multiplier} at scale $r$ therefore gives
\begin{equation}\label{periodic-note:largemult}
 \|(\nabla_y\omega_j)^\epsilon\partial_jd\|_{L^2(Q(Y,r))}
 \le CM\epsilon\rho^{\alpha_{\mathrm{gh}}-1-\beta}r^{n/2+\beta},
\end{equation}
for $\epsilon\le r<c\rho$. To justify this localization, multiply $d$ by a spatial and temporal cutoff supported on an enlarged cylinder and apply Lemma~\ref{periodic-note:multiplier}. The additional terms involve {$d/r$, $\epsilon\nabla d/r$, and $\epsilon d/r^2$}, and are controlled by the three convolution estimates at the start of this subsubsection. If a time extension is needed, reflection across the upper time face on the enlarged cylinder preserves these local $W^{2,1}_2$ bounds. Choose the cutoff to equal one on the smaller cylinder. There its product with $d$ has gradient exactly $\nabla d$.

For $r<\epsilon$, the cell growth bound \eqref{periodic-note:cellgrowth} and the gradient H\"older estimate give instead
\begin{align*}
 \|(\nabla_y\omega_j)^\epsilon\|_{L^2(Q(Y,r))}
 &\le C\epsilon^{1-\alpha_{\mathrm{in}}}r^{n/2+\alpha_{\mathrm{in}}},\\
 |\nabla d|&\le CM\epsilon^\beta\rho^{\alpha_{\mathrm{gh}}-1-\beta}.
\end{align*}
Their product is at most
$CM\epsilon^{\alpha_{\mathrm{gh}}-\alpha}r^{n/2+\alpha}$, because $r<\epsilon\le C\rho$ and $\alpha<\alpha_{\mathrm{in}}$. For $r\ge\epsilon$, the same target bound follows from \eqref{periodic-note:largemult}, using $r<c\rho$ and $\rho\ge C\epsilon$.

The remaining terms in removing the convolution satisfy
\begin{equation}\label{periodic-note:remaininggradientterms}
\begin{aligned}
 &\|\nabla d\|_{L^2(Q(Y,r))}
 +\|\epsilon\omega_j^\epsilon\nabla\partial_jd\|_{L^2(Q(Y,r))}\\
 &\quad+\|\nabla(\epsilon^2\Psi_{ij}^\epsilon\partial_{ij}v)\|_{L^2(Q(Y,r))}
 +\|\epsilon\omega_j^\epsilon\nabla\partial_ju\|_{L^2(Q(Y,r))}\\
 &\hspace{12mm}\le CM\epsilon\rho^{\alpha_{\mathrm{gh}}-1-\beta}r^{n/2+\beta}
 \le CM\epsilon^{\alpha_{\mathrm{gh}}-\alpha}r^{n/2+\alpha}.
\end{aligned}
\end{equation}
For $\nabla d$ the first inequality holds also when $r<\epsilon$: write the difference of gradients as an integral of $D_x^2u$ along spatial translations and use \eqref{periodic-note:localhess}. The term $\epsilon\omega_j^\epsilon\nabla\partial_jd$ uses the bound for $D_x^2d$ at the start of this subsubsection. For the second correction, expand its gradient into $\epsilon(\nabla_y\Psi_{ij})^\epsilon\partial_{ij}v$ and $\epsilon^2\Psi_{ij}^\epsilon\nabla\partial_{ij}v$; apply Lemma~\ref{periodic-note:potentialbounds} and \eqref{periodic-note:localhess}, moving one derivative to the kernel in the latter term. The final term uses boundedness of $\omega$ and \eqref{periodic-note:localhess} directly.

\subsection{Proof of the global H\"older convergence theorem}\label{periodic-note:sec:supremum}
\subsubsection{Divergence data in a boundary layer}
Suppose ${\bm{F}}$ is supported in $E_\rho=\{\rho\le\delta\le2\rho\}$ and
\[
 \|{\bm{F}}\|_{L^2(Q(Y,r))}\le M'\rho^{1-\beta}r^{n/2+\beta},
 \qquad Y\in E_\rho,\quad r<c\rho.
\]
Then its solution with zero original parabolic boundary values satisfies
\begin{equation}\label{periodic-note:layerbound}
 \|v\|_{L^\infty}\le CM'\rho.
\end{equation}
Let $Q(X,r)$ be admissible and meet the source layer at $Y$. Since $\delta$ is $1$-Lipschitz for $d_p$,
\[
 3r<\delta(X)-r\le\delta(Y)\le2\rho,
\]
so $r<2\rho/3$. For $r$ smaller than a sufficiently small fixed multiple of $\rho$, recenter at source points and enlarge the radius by a fixed factor. To respect the backward time orientation, choose source points in the cylinder whose time coordinates increase to the supremum of the source times. A backward cylinder of radius $3r$ about each such point contains the part of the original source cylinder below that time. Apply the assumed bound and pass to the limit in the integral of $|{\bm{F}}|^2$. For the remaining radii, which are comparable to $\rho$, cover the spatial ball and time interval by a fixed number of pieces of spatial diameter $c\rho/8$ and time length $(c\rho/8)^2$. The same recentering on each piece gives a bound at radius less than $c\rho$. The number of pieces depends only on the fixed ratios and dimension. Thus
\[
 \|{\bm{F}}\|_{L^2(Q(X,r))}
 \le CM'\rho^{1-\beta}r^{n/2+\beta}
 \le CM'\rho^{1-\alpha_{\mathrm{ss}}}r^{n/2+\alpha_{\mathrm{ss}}}.
\]
The distributional support of $\divergence{\bm{F}}$ is contained in the relative closure of $E_\rho$, so $\|\delta\|_{L^\infty(\operatorname{supp}(\operatorname{div}{\bm{F}}))}\le2\rho$. Apply Theorem~\ref{thm:divergence} with growth constant $CM'\rho^{1-\alpha_{\mathrm{ss}}}$. Its support bound gives \eqref{periodic-note:layerbound} directly.

\subsubsection{Summation of layers and comparison}
Choose $R\ge C_2\epsilon$, where $C_2$ ensures all the preceding convolution neighborhoods are interior. {For each nonnegative integer $m$, set}
\[
 \rho_m=2^mR,\quad E_m=\{\rho_m<\delta\le2\rho_m\},\quad
 \bm{r}_\epsilon^{(m)}=\bm{1}_{E_m}\bm{r}_\epsilon.
\]
Only finitely many layers are nonempty because $\delta\le1$. By \eqref{periodic-note:fluxmorrey} and \eqref{periodic-note:layerbound}, their solutions with zero initial and Dirichlet boundary values satisfy
\[
 \|v_\epsilon^{(m)}\|_{L^\infty}\le CM\epsilon\rho_m^{\alpha_{\mathrm{gh}}-1}.
\]
Therefore $v_{\epsilon,R}=\sum_mv_\epsilon^{(m)}$ satisfies
\begin{equation}\label{periodic-note:layersum}
 \|v_{\epsilon,R}\|_{L^\infty}\le CM\epsilon R^{\alpha_{\mathrm{gh}}-1},
\end{equation}
since $\alpha_{\mathrm{gh}}<1$. On $\{\delta>R\}=D_R\times(R^2,T)$, where
$D_R=\{x:\operatorname{dist}(x,\partial D)>R\}$, the function $w_\epsilon-v_{\epsilon,R}$ is homogeneous. On its parabolic boundary, \eqref{eqn:periodic-boundary} and \eqref{periodic-note:correctionsmall} give
\[
 |w_\epsilon|\le CM(R^{\alpha_{\mathrm{gh}}}+\epsilon R^{\alpha_{\mathrm{gh}}-1}).
\]
The comparison principle and \eqref{periodic-note:layersum} imply the same bound for $w_\epsilon$ throughout this interior region, with an additional factor of two on $\|v_{\epsilon,R}\|_{L^\infty}$. Recover $u_\epsilon-u$ by \eqref{periodic-note:correctionsmall}. In the remaining region $\delta\le R$, use \eqref{eqn:periodic-boundary} directly. Hence
\[
 \|u_\epsilon-u\|_{L^\infty}\le CM(R^{\alpha_{\mathrm{gh}}}+\epsilon R^{\alpha_{\mathrm{gh}}-1}).
\]
If $T=\infty$, comparison is applied on finite time strips and the supremum is then taken over them. Lemma~\ref{lem:std-comparison} applies on each such strip.

For $\epsilon<C_2^{-1}$ take $R=C_2\epsilon$. For larger $\epsilon\le1$, \eqref{eqn:periodic-boundary} already gives the same rate after increasing the constant. Undoing the normalization yields \eqref{eqn:periodic-rate}. Interpolate this estimate with the common $C^{\agh,\agh/2}$ bound \eqref{eqn:periodic-global}, using the parabolic metric. This gives \eqref{eqn:periodic-holder-rate} and completes the proof of Theorem~\ref{thm:periodic-main}.

\begin{remark}
The supremum rate also gives the scalar conclusion
\begin{equation}\label{periodic-note:solutionmorreyrate}
\begin{aligned}
 &\bigl\||u_\epsilon-u_{\mathrm{hom}}|^2\,dX
       \bigr\|_{\mathsf M^{n+2\beta}(D\times(0,T))}^{1/2}\\
 &\qquad\le CM\|\delta\|_{L^\infty(D\times(0,T))}^{1+\beta-\alpha_{\mathrm{gh}}}
                    \epsilon^{\alpha_{\mathrm{gh}}}.
\end{aligned}
\end{equation}
Indeed, the local $L^2$ norm of the error is at most its supremum norm times $Cr^{(n+2)/2}$, and $r^{1-\beta}\le C\|\delta\|_{L^\infty(D\times(0,T))}^{1-\beta}$. Multiplication by the normalization factor $\|\delta\|_{L^\infty(D\times(0,T))}^{\beta}$ proves \eqref{periodic-note:solutionmorreyrate}.\end{remark}

\subsection{Proof of the corrected-gradient Morrey rate}\label{periodic-note:sec:gradient}
We prove Theorem~\ref{periodic-note:gradienttheorem}, using the common expansion and local removal estimates from Subsection~\ref{periodic-note:sec:local}.
\subsubsection{Interior energy decay for the corrected error}
Fix an interior cylinder $Q(Y,\rho/8)$ and suppose that $\mathcal H_\epsilon w=\operatorname{div}\bm F$ there. For $0<R\le\rho/16$, let $h$ solve the homogeneous equation in $Q(Y,R)$ with the same parabolic boundary values as $w$. Testing the equation for $w-h$, which has zero parabolic boundary values, gives
\[
 \int_{Q(Y,R)}|\nabla(w-h)|^2\,dX
 \le C\int_{Q(Y,R)}|\bm F|^2\,dX.
\]
For $0<r<R/4$, the homogeneous oscillation estimate and the Caccioppoli inequality, together with the parabolic Poincar\'e inequality for $h$, give
\[
 \int_{Q(Y,r)}|\nabla h|^2\,dX
 \le C(r/R)^{n+2\alpha_{\mathrm{in}}}
             \int_{Q(Y,R)}|\nabla h|^2\,dX.
\]
Indeed, subtract a constant on the outer cylinder, use the homogeneous $L^2$ oscillation estimate on an intermediate cylinder, and apply the Caccioppoli inequality on $Q(Y,2r)$. The factors are $r^{-2}r^{n+2}(r/R)^{2\alpha_{\mathrm{in}}}$ and $R^{-n}\int_{Q(Y,R)}|\nabla h|^2\,dX$. Combining the last two estimates gives
\[
 \int_{Q(Y,r)}|\nabla w|^2\,dX
 \le C(r/R)^{n+2\alpha_{\mathrm{in}}}
             \int_{Q(Y,R)}|\nabla w|^2\,dX
       +C\int_{Q(Y,R)}|\bm F|^2\,dX.
\]
Suppose that $\|\bm F\|_{L^2(Q(Y,a))}\le M'a^{n/2+\beta}$ for $0<a\le\rho/8$. Fix $0<\alpha<\min\{\alpha_{\mathrm{in}},\beta\}$ and choose $0<\theta<1/4$ so that
\[
 C\theta^{n+2\alpha_{\mathrm{in}}}
 \le \tfrac12\theta^{n+2\alpha}.
\]
Apply the preceding energy decay at the successive radii {$R$, $\theta R$, $\theta^2R$, and $\ldots$}. After division by the radius to the power $n+2\alpha$, the homogeneous contraction factor is at most $1/2$ and the forcing terms contain the additional factors $\theta^{2j(\beta-\alpha)}$. Summing the resulting geometric series and using monotonicity at intermediate radii gives, for $0<r<R/2$ and $R\le\rho/16$,
\[
\begin{aligned}
 \int_{Q(Y,r)}|\nabla w|^2\,dX
 \le{}&C(r/R)^{n+2\alpha}
              \int_{Q(Y,R)}|\nabla w|^2\,dX\\
 &+C(M')^2R^{2(\beta-\alpha)}r^{n+2\alpha}.
\end{aligned}
\]
At the outer scale, the Caccioppoli inequality between $Q(Y,R)$ and $Q(Y,2R)$ yields
\[
 \int_{Q(Y,R)}|\nabla w|^2\,dX
 \le CR^n\|w\|_{L^\infty(Q(Y,2R))}^2
       +C(M')^2R^{n+2\beta}.
\]
Take $R=\rho/16$. Combining the last two bounds and taking square roots therefore gives, for $0<r<\rho/64$,
\begin{equation}\label{periodic-note:localenergyiteration}
 \|\nabla w\|_{L^2(Q(Y,r))}
 \le C\left(\rho^{-\alpha}\|w\|_{L^\infty(Q(Y,\rho/8))}
               +M'\rho^{\beta-\alpha}\right)r^{n/2+\alpha}.
\end{equation}
A constant may be subtracted from $w$ in this supremum norm. If the residual estimate is initially available only for $a<c\rho$ with a smaller fixed $c$, replace {$\rho/8$, $\rho/16$, and $\rho/64$} throughout this argument by fixed smaller multiples of $\rho$. The constant then also depends on those fixed ratios.

For $w_\epsilon$, take $c>0$ small enough that all residual estimates hold on $Q(Y,2c\rho)$. The global rate and \eqref{periodic-note:correctionsmall} imply
\[
 \|w_\epsilon\|_{L^\infty(Q(Y,c\rho))}
 \le CM\epsilon^{\alpha_{\mathrm{gh}}},\qquad
 M'=CM\epsilon\rho^{\alpha_{\mathrm{gh}}-1-\beta},
 \quad \rho=\delta(Y)\ge C\epsilon.
\]
Use \eqref{periodic-note:fluxmorrey} in \eqref{periodic-note:localenergyiteration}. Since $\alpha<\alpha_{\mathrm{gh}}\le\alpha_{\mathrm{in}}$ and $\alpha<\beta$, this gives
\begin{equation}\label{periodic-note:correctedmorrey}
\begin{aligned}
 \|\nabla w_\epsilon\|_{L^2(Q(Y,r))}
 &\le CM\left(\epsilon^{\alpha_{\mathrm{gh}}}\rho^{-\alpha}
       +\epsilon\rho^{\alpha_{\mathrm{gh}}-1-\alpha}\right)r^{n/2+\alpha}\\
 &\le CM\epsilon^{\alpha_{\mathrm{gh}}-\alpha}r^{n/2+\alpha}.
\end{aligned}
\end{equation}

\subsubsection{Recovery of the unsmoothed gradient}
On the region where $\chi_\epsilon=1$, differentiation of the definition of $w_\epsilon$ gives
\begin{align*}
 \bm{E}_\epsilon={}&\nabla w_\epsilon+\nabla d
 +(\nabla_y\omega_j)^\epsilon\partial_jd
 +\epsilon\omega_j^\epsilon\nabla\partial_jd\\
 &+\nabla(\epsilon^2\Psi_{ij}^\epsilon\partial_{ij}v)
 +\epsilon\omega_j^\epsilon\nabla\partial_ju.
\end{align*}
Estimates \eqref{periodic-note:correctedmorrey}, \eqref{periodic-note:largemult}, and \eqref{periodic-note:remaininggradientterms} prove the desired local bound there. A finite covering deals with admissible radii up to $\delta(Y)/4$.

\subsubsection{The boundary and transition layers}
By \eqref{eqn:periodic-data} and the Cauchy--Schwarz inequality, the measure $|f|\,dX$ satisfies \eqref{eqn:morrey} with growth constant $CM$. Thus Lemma~\ref{lem:mixed-gradient}, with $\nu=f\,dX$ and $\bm F=0$, applies to both original solutions. The exponent is the common $\alpha_{\mathrm{gh}}$ fixed in Subsection~\ref{periodic-note:sec:setting}; under the normalization $\|\delta\|_{L^\infty}=1$, it gives
\begin{equation}\label{periodic-note:rawgradientgrowth}
 \|\nabla u_\epsilon\|_{L^2(Q(X,r))}
 +\|\nabla u\|_{L^2(Q(X,r))}
 \le CM r^{n/2+\alpha_{\mathrm{gh}}}.
\end{equation}
The estimate holds for every $X\in D\times(0,T)$ and $0<r<\delta(X)/4$, with constants uniform in $\epsilon$.
For cylinders whose centers have $\delta(X)\le C\epsilon$, their admissible radii satisfy $r\le C'\epsilon$. Hence \eqref{periodic-note:rawgradientgrowth} is at most
$CM\epsilon^{\alpha_{\mathrm{gh}}-\alpha}r^{n/2+\alpha}$.

Where the cutoff term is nonzero in such cylinders, $\delta\ge C_3\epsilon$, so \eqref{periodic-note:localgrad} gives $|\nabla u|\le CM\epsilon^{\alpha_{\mathrm{gh}}-1}$. For $r<\epsilon$, multiply this bound by the rescaling of \eqref{periodic-note:cellgrowth}. For $\epsilon\le r\le C'\epsilon$, use the periodic cell energy bound instead. Both yield
\[
 \|\chi_\epsilon(\nabla_y\omega_j)^\epsilon\partial_ju\|_{L^2(Q(X,r))}
 \le CM\epsilon^{\alpha_{\mathrm{gh}}-\alpha}r^{n/2+\alpha}.
\]
This covers the boundary layer and the transition of the cutoff, including cylinders meeting the transition. Together with the interior estimates it proves
\[
 \sup_{\substack{X\in D\times(0,T)\\0<r<\delta(X)/4}}
 r^{-n/2-\alpha}\|\bm{E}_\epsilon\|_{L^2(Q(X,r))}
 \le CM\epsilon^{\alpha_{\mathrm{gh}}-\alpha}.
\]
Undoing the parabolic dilation and using the norm normalization in Subsection~\ref{subsec:mixed-source-space} proves \eqref{periodic-note:gradientrate}. For periods comparable to the global distance scale, the uniform bounds in this argument suffice after increasing the constant.

\subsection{The compact-support energy estimate}\label{periodic-note:sec:appendix}
We conclude with an energy estimate for a smooth homogenized solution whose support stays inside the spatial domain. Unlike Theorems~\ref{thm:periodic-main} and~\ref{periodic-note:gradienttheorem}, this result uses a whole-time energy class and requires no boundary layer estimate. Let $U\subset\R^n$ be bounded and open, and work on $U\times\R$.
In this subsection, $\|\cdot\|_{L^2}$ denotes the norm on $U\times\R$. 
Solutions with zero spatial boundary values are understood in
$L^2(\R;H_0^1(U))$, with time derivative in $L^2(\R;H^{-1}(U))$.
The finite-energy condition uniquely determines the solution; equivalently, it is the causal solution for the compactly supported forcing considered below.

\begin{theorem}\label{periodic-note:main}
Let $u=u_{\mathrm{hom}}\in C_c^\infty(U\times\R)$, set $f=\mathcal H_0u$, and let $\mathcal H_\epsilon u_\epsilon=f$ with zero spatial boundary values. Let $K_x$ be the spatial projection of $\operatorname{supp}u$ and suppose
$0<\epsilon<\min\{1,\operatorname{dist}(K_x,\partial U)/2\}$.
Then
\begin{align}
 \|\nabla(u_\epsilon-u-\epsilon\omega_j^\epsilon\partial_ju)\|_{L^2}
 &\le C\epsilon\bigl(\|D_x^2u\|_{L^2}+\|\partial_tu\|_{L^2}\bigr),\label{periodic-note:finalgrad}\\
 \|u_\epsilon-u\|_{L^2}
 &\le C_U\epsilon\bigl(\|D_x^2u\|_{L^2}+\|\partial_tu\|_{L^2}\bigr).\label{periodic-note:finall2}
\end{align}
Here $C$ depends only on {dimension, ellipticity, and the resulting scalar cell bounds}; $C_U$ also depends on a Poincar\'e constant for $U$. The restriction on $\epsilon$ ensures that the spatial convolution remains supported inside $U$.
\end{theorem}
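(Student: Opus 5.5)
The plan is the classical two-scale energy argument, with the smoothed expansion of Subsection~\ref{periodic-note:subsec:expansion} applied to a compactly supported smooth $u$; no boundary layer appears. The restriction $\epsilon<\operatorname{dist}(K_x,\partial U)/2$ guarantees that $v=\eta_\epsilon*_xu$, $\epsilon\omega_j^\epsilon\partial_jv$ and $\epsilon^2\Psi_{ij}^\epsilon\partial_{ij}v$ are all supported in a compact subset of $U\times\R$. Therefore
\[
 w_\epsilon=u_\epsilon-v-\epsilon\omega_j^\epsilon\partial_jv-\epsilon^2\Psi_{ij}^\epsilon\partial_{ij}v
\]
lies in the finite-energy class $L^2(\R;H_0^1(U))$, with time derivative in $L^2(\R;H^{-1}(U))$. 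By \eqref{periodic-note:resid}, which holds for smooth compactly supported $u$ on a whole-time cylinder, it satisfies $\mathcal H_\epsilon w_\epsilon=\operatorname{div}\bm{r}_\epsilon$ on $U\times\R$.

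\textbf{Step 1: energy estimate for $w_\epsilon$.} Test the residual equation with $w_\epsilon$ over all of $\R$. The time term integrates to zero, since $w_\epsilon$ is causal and decays in time: it vanishes for early times, and the homogeneous energy dissipates for late times. Ellipticity then gives
\[
 \|\nabla w_\epsilon\|_{L^2}\le\|\bm{r}_\epsilon\|_{L^2}.
\]

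\textbf{Step 2: global bound on the residual.} Bound each term of \eqref{periodic-note:residualfield} in global $L^2$, using the same tools as in \eqref{periodic-note:fluxmorrey}, now without Morrey weights.
\begin{itemize}
\item For $\bm F_\epsilon$, write $f=\partial_tu-\operatorname{div}(A_{\mathrm{hom}}\nabla u)$. This gives $\|\bm F_\epsilon\|_{L^2}\le C\epsilon\|f\|_{L^2}\le C\epsilon(\|D_x^2u\|_{L^2}+\|\partial_tu\|_{L^2})$ by Minkowski's inequality.
\item The terms $\epsilon A^\epsilon\omega^\epsilon D^2v$ and $\epsilon\Phi^\epsilon D^2v$ are controlled by Lemma~\ref{periodic-note:potentialbounds} and boundedness of $\omega$, together with $\|D^2_xv\|_{L^2}\le\|D^2_xu\|_{L^2}$.
\item In $\epsilon^2\Psi^\epsilon\partial_j\partial_tv$, move $\partial_j$ onto the mollifier at a cost of $C\epsilon^{-1}$.
\item Expand the last flux into $\epsilon(\nabla_y\Psi)^\epsilon D^2v$ and $\epsilon^2\Psi^\epsilon D^3v$. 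The first is bounded using $\|\nabla_y\Psi\|_\infty$; for the second, put one derivative on the kernel.
\end{itemize}
Altogether,
\[
 \|\bm{r}_\epsilon\|_{L^2}\le C\epsilon\bigl(\|D_x^2u\|_{L^2}+\|\partial_tu\|_{L^2}\bigr).
\]

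\textbf{Step 3: removing the smoothing.} Write
\[
 \nabla(u_\epsilon-u-\epsilon\omega_j^\epsilon\partial_ju)=\nabla w_\epsilon+\nabla d+(\nabla_y\omega_j)^\epsilon\partial_jd+\epsilon\omega_j^\epsilon\nabla\partial_jd+\nabla(\epsilon^2\Psi^\epsilon_{ij}\partial_{ij}v),
\]
where $d=v-u$.
\begin{itemize}
\item We have $\|\nabla d\|_{L^2}\le C\epsilon\|D^2_xu\|_{L^2}$, and the $\epsilon\omega^\epsilon\nabla\partial_jd$ term is bounded by $C\epsilon\|D^2_xu\|_{L^2}$.
\item The second-corrector gradient is handled exactly as in Step 2.
\item The delicate term is $(\nabla_y\omega_j)^\epsilon\partial_jd$, since $\nabla_y\omega$ is only in $L^2$ on cells. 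For it I would apply Lemma~\ref{periodic-note:multiplier} to $d$, which is smooth with compact support. This gives the bound $C(\|\nabla d\|_{L^2}+\epsilon\|D^2_xd\|_{L^2}+\epsilon\|\partial_td\|_{L^2})\le C\epsilon(\|D^2_xu\|_{L^2}+\|\partial_tu\|_{L^2})$.
\end{itemize}
This multiplier step is the main obstacle. It is where the merely measurable, time-dependent cell gradient must be paired with the macroscopic derivative, and the lemma's time-derivative term is exactly what forces $\|\partial_tu\|_{L^2}$ into the right-hand side. Combining the pieces proves \eqref{periodic-note:finalgrad}.

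\textbf{Step 4: the $L^2$ estimate.} For \eqref{periodic-note:finall2}, note that $u_\epsilon-u-\epsilon\omega_j^\epsilon\partial_ju$ vanishes on $\partial U$ for each time. Hence the spatial Poincar\'e inequality on $U$, applied slice by slice and integrated in $t$, converts \eqref{periodic-note:finalgrad} into an $L^2$ bound with constant $C_U$. The remaining term satisfies $\|\epsilon\omega^\epsilon_j\partial_ju\|_{L^2}\le C\epsilon\|\nabla u\|_{L^2}$. This is bounded by $C_U\epsilon\|D^2_xu\|_{L^2}$, again by Poincar\'e applied to $\nabla u$, whose spatial support lies in $K_x\Subset U$.
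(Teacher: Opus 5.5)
Your proposal is correct and follows essentially the same route as the paper. The shared steps are the smoothed two-scale expansion with the residual identity, the whole-time energy test for $w_\epsilon$ against a global $O(\epsilon)$ bound on $\bm{r}_\epsilon$, Lemma~\ref{periodic-note:multiplier} applied to $d=v-u$ for the term $(\nabla_y\omega_j)^\epsilon\partial_jd$, and the slice-wise Poincar\'e inequality for the $L^2$ estimate. The only difference is cosmetic: you justify the vanishing of the time term by causality and late-time decay, while the paper cites the whole-time energy identity in the finite-energy class, and both justifications are valid.
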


\subsubsection{Energy estimate for the two-scale error}
Extend $u$ by zero outside $U$ and use the symmetric spatial kernel fixed in Subsubsection~\ref{periodic-note:subsec:expansion}. Put $v=\eta_\epsilon*_xu$ and
\[
 K=\|D_x^2u\|_{L^2}+\|\partial_tu\|_{L^2}.
\]
The support restriction in Theorem~\ref{periodic-note:main} ensures that $v$ has compact support in $U\times\R$. Use the corrected error and residual defined in Subsubsection~\ref{periodic-note:subsec:expansion}.
Lemma~\ref{periodic-note:potentialbounds} and convolution estimates give
\begin{equation}\label{periodic-note:fluxbounds}
\begin{aligned}
 \|\bm{r}_\epsilon\|_{L^2}
 &\le C\epsilon\|f\|_{L^2}
      +C\epsilon\|D_x^2v\|_{L^2}
      +C\epsilon^2\|\nabla\partial_tv\|_{L^2}
      +C\epsilon^2\|D_x^3v\|_{L^2}\\
 &\le C\epsilon K.
\end{aligned}
\end{equation}
We also record the estimate used to recover the error after subtracting the first corrector:
\begin{equation}\label{periodic-note:secondcorrectionenergy}
 \|\nabla(\epsilon^2\Psi_{ij}^\epsilon\partial_{ij}v)\|_{L^2}
 \le C\epsilon\|D_x^2v\|_{L^2}+C\epsilon^2\|D_x^3v\|_{L^2}
 \le C\epsilon\|D_x^2u\|_{L^2}.
\end{equation}
The residual identity \eqref{periodic-note:resid} holds in distributions. The function $w_\epsilon$ has zero spatial trace and belongs to $L^2(\R;H_0^1(U))$; \eqref{periodic-note:resid} then puts its time derivative in $L^2(\R;H^{-1}(U))$.
Testing \eqref{periodic-note:resid} with $w_\epsilon$ over the whole time axis gives
\[
 \int_{U\times\R} A^\epsilon\nabla w_\epsilon\cdot\nabla w_\epsilon\,dX
 =-\int_{U\times\R}\bm{r}_\epsilon\cdot\nabla w_\epsilon\,dX.
\]
The temporal energy term vanishes by the whole-time energy identity, justified by time cutoffs and the finite-energy class. Consequently,
\begin{equation}\label{periodic-note:Wbound}
 \|\nabla w_\epsilon\|_{L^2}\le C\epsilon K.
\end{equation}

\subsubsection{Conclusion of the energy estimate}
Put $d=v-u$. The elementary convolution estimates give
\[
 \|\nabla d\|_{L^2}\le C\epsilon\|D_x^2u\|_{L^2},
 \qquad \|D_x^2d\|_{L^2}+\|\partial_td\|_{L^2}\le 2K.
\]
By Lemma~\ref{periodic-note:multiplier} and boundedness of $\omega$,
\[
 \|\nabla(\epsilon\omega_j^\epsilon\partial_jd)\|_{L^2}
 \le C\epsilon K.
\]
Using the exact identity
\[
 u_\epsilon-u-\epsilon\omega_j^\epsilon\partial_ju
 =w_\epsilon+\epsilon^2\Psi_{ij}^\epsilon\partial_{ij}v+d+\epsilon\omega_j^\epsilon\partial_jd,
\]
we obtain \eqref{periodic-note:finalgrad} from \eqref{periodic-note:secondcorrectionenergy} and \eqref{periodic-note:Wbound}.
All terms have zero spatial trace. The Poincar\'e inequality therefore gives \eqref{periodic-note:finall2}, using also
$\|\nabla u\|_{L^2}\le C_U\|D_x^2u\|_{L^2}$, since each $\partial_ju$ has compact support in $U$.

The argument extends by approximation to the $W^{2,1}_2$ closure of smooth functions supported in a fixed compact subset of $U\times\R$. Lemma~\ref{periodic-note:multiplier} controls the first corrector under this approximation.

\endgroup

\section*{Acknowledgments}
This work was supported by JSPS KAKENHI Grant Numbers 21K03281 and 23H03798.
The author used ChatGPT (OpenAI) to assist with mathematical discussions,
manuscript preparation, and English-language editing.
\enlargethispage{4\baselineskip}
\bibliographystyle{amsplain}
\bibliography{references}
\end{document}